\numberwithin{equation}{section}
\let\origsection\section
\renewcommand\section{\@ifstar{\starsection}{\nostarsection}}
\newcommand\nostarsection[1]
\sectionprelude\origsection{#1}\sectionpostlude}
\newcommand\starsection[1]
\newcommand\sectionprelude{%
  \vspace{1em}
}
\newcommand\sectionpostlude{%
  \vspace{1em}
}
\let\origsubsection\subsection
\renewcommand\subsection{\@ifstar{\starsubsection}{\nostarsubsection}}
\newcommand\nostarsubsection[1]
\subsectionprelude\origsubsection{#1}}
\newcommand\starsubsection[1]
\newcommand\subsectionprelude{%
  \vspace{0.8em}
}
\newcommand{\mm}{\mathbb}
\newcommand{\ima}{\mathrm{\small Im }\,}
\newcommand{\diff}{\mathrm{Diff}}
\newcommand{\emb}{\mathrm{Emb}}
\newcommand{\Id}{\mathrm{Id}}
\newcommand{\mmm}{\mathcal}
\newcommand{\Bun}{\mathrm{Bun}}
\newcommand{\Emb}{\mathrm{Emb}}
\newcommand{\Hom}{\mathrm{Hom}}
\newcommand{\iso}{\mathrm{Iso}}
\newcommand{\hAut}{\mathrm{hAut}}
\newcommand{\GL}{\mathrm{GL}}
\renewcommand{\O}{\mathrm{O}}
\newcommand{\U}{\mathrm{U}}
\newcommand{\SO}{\mathrm{SO}}
\newcommand{\Spin}{\mathrm{Spin}}
\newcommand{\Pin}{\mathrm{Pin}}
\newcommand{\timesover}[1]{{\raisebox{.5ex}{$\scriptstyle \bigtimes\limits_{#1}$}}}
\renewcommand{\:}{\mathop:}
\newcommand{\Fr}{\mathrm{Fr}}
\newcommand{\fr}{\mathrm{fr}}
\newcommand{\Map}{\mathrm{Map}}
\renewcommand{\int}{\mathrm{int}}
\newcommand{\hdrange}{\frac{g-3}{2}}
\newcommand{\confks}{\bm{C}_{kS^1}}
\newcommand{\conf}{\bm{C}}
\newcommand{\parallelsum}{\mathbin{\!/\mkern-5mu/\!}}
\newcommand{\dcup}[1]{{\raisebox{.5ex}{$\scriptstyle \coprod\limits_{#1}$}}}
\newcommand{\wellbehaved}{decorated-chiral}
\newcommand{\circlepathcomponent}{\confks(\mm{R}^\infty;(L(\Theta)\parallelsum L_\diamond(\GL_{d-1}))_0)}
\mathchardef\ordinarycolon\mathcode`\:
\theoremstyle{plain}
\newtheorem{MainThm}{Theorem}
\newtheorem{theorem}{Theorem}[section]
\newtheorem{proposition}[theorem]{Proposition}
\newtheorem{lemma}[theorem]{Lemma}
\newtheorem{corollary}[theorem]{Corollary}
\theoremstyle{definition}
\newtheorem{definition}[theorem]{Definition}
\newtheorem{notation}[theorem]{Notation}
\newtheorem{example}[theorem]{Example}
\newtheorem{examples}[theorem]{Examples}
\theoremstyle{remark}
\newtheorem{remark}[theorem]{Remark}
\newtheorem*{remark*}{Remark}
\title[Decoupling decorations on moduli spaces]{Decoupling decorations on moduli spaces of manifolds}
\author{Luciana Basualdo Bonatto}
\email{luciana.bonatto@maths.ox.ac.uk}
\address{Mathematical Institute\\
Andrew Wiles Building\\
Oxford OX2 6GG \\
UK}
\date{\today}
\begin{document}
\newpage

\begin{abstract}
We consider moduli spaces of $d$-dimensional manifolds with embedded particles and discs. In this moduli space, the location of the particles and discs is constrained by the $d$-dimensional manifold. We will compare this moduli space with the moduli space of $d$-dimensional manifolds in which the location of such decorations is no longer constrained, i.e. the decorations are decoupled. We generalise work by B\"odigheimer--Tillmann for oriented surfaces and obtain new results for surfaces with different tangential structures as well as to higher dimensional manifolds. We also provide a generalisation of this result to moduli spaces with more general submanifold decorations and specialise in the case of decorations being unparametrised unlinked circles. 
\end{abstract}
\maketitle

\vspace{-0.5ex}\section{Introduction}

\noindent The diffeomorphism group of a smooth manifold and its classifying space are fundamental objects in topology. In particular, for a closed smooth manifold $W$, the space $B\diff(W)$ classifies the smooth fibre bundles with fibre $W$. When $W$ is a manifold with non-empty boundary, we consider $\diff(W)$ to be the group of those diffeomorphisms which are the identity near $\partial W$. The classifying spaces of such groups are also extremely important, as they are crucial for instance to the construction of topologically enriched categories of bordisms. To completely understand the classifying space of a diffeomorphism group is extremely difficult and such a description is only available for very few manifolds. One key strategy when studying $B\diff(W)$ is to understand how its homology behaves when changing the manifold $W$ by operations such as connected sum or gluing of cobordisms. In this paper, we use this strategy to study the stable homology of the decorated diffeomorphism group.

A $d$-dimensional manifold $W$ is said to be decorated if it is equipped with disjoint embeddings of k points and m discs $D^d$. The decorated diffeomorphism group of $W$, denoted $\diff^k_m(W)$ consists of those $\phi:W\to W$ which preserve the marked points and parametrized discs up to permutations. The classifying space $B\diff^k_m(W)$ has been studied from many different perspectives, for instance, considering the behaviour after increasing the number of marked points or discs (see \cite{MR3432333}).

For the case of $W=S_{g,b}$ the orientable surface of genus $g$ and $b$ boundary components, B\"odigheimer and Tillmann \cite{MR1851247} studied the comparison between $B\diff^k_m(S_{g,b})$ and $B\diff(S_{g,b})$. They used a decoupling map
    \begin{equation}\label{eq:decoupling-map-for-surfaces}
        d:\begin{tikzcd}
    B\diff^{+,k}_m(S_{g,b})\ar[rr, "f\times e_m \times e^k"] && B\diff^+(S_{g,b})\times B\Sigma_m\times B(\Sigma_k\wr\SO(2))
    \end{tikzcd}
    \end{equation}
where the map $f$ is induced by the inclusion $\diff^k_m(S_{g,b})\to \diff(S_{g,b})$, the map $e_m$ is induced by $\diff^k_m(S_{g,b})\to \Sigma_m$ recording the permutation of the marked discs, and $e^k$ is induced by the map $\diff^k_m(S_{g,b})\to \Sigma_k\wr \SO(2)$ recording the permutation of the marked points together with the induced map on their tangent space (see Figure~\ref{fig:decoupling} for a geometric representation of the decoupling map). B\"odigheimer and Tillmann showed that $d$ induces a homology isomorphism in degrees $\leq \frac{g}{3}$, therefore, in this range, we say that the decorations, which were bound to the manifold, get decoupled. The proof of this result relies strongly on Harer' stability theorem. Later, generalisations of Harer's result for non-orientable surfaces \cite{MR2367024} allowed Hanbury \cite{MR2439464} to generalise the decoupling result to such surfaces as well. In this paper, we further generalise this result to moduli spaces of manifolds in higher dimensions with tangential structures.

\begin{figure}[h!t]
    \centering\def\svgwidth{\columnwidth}
    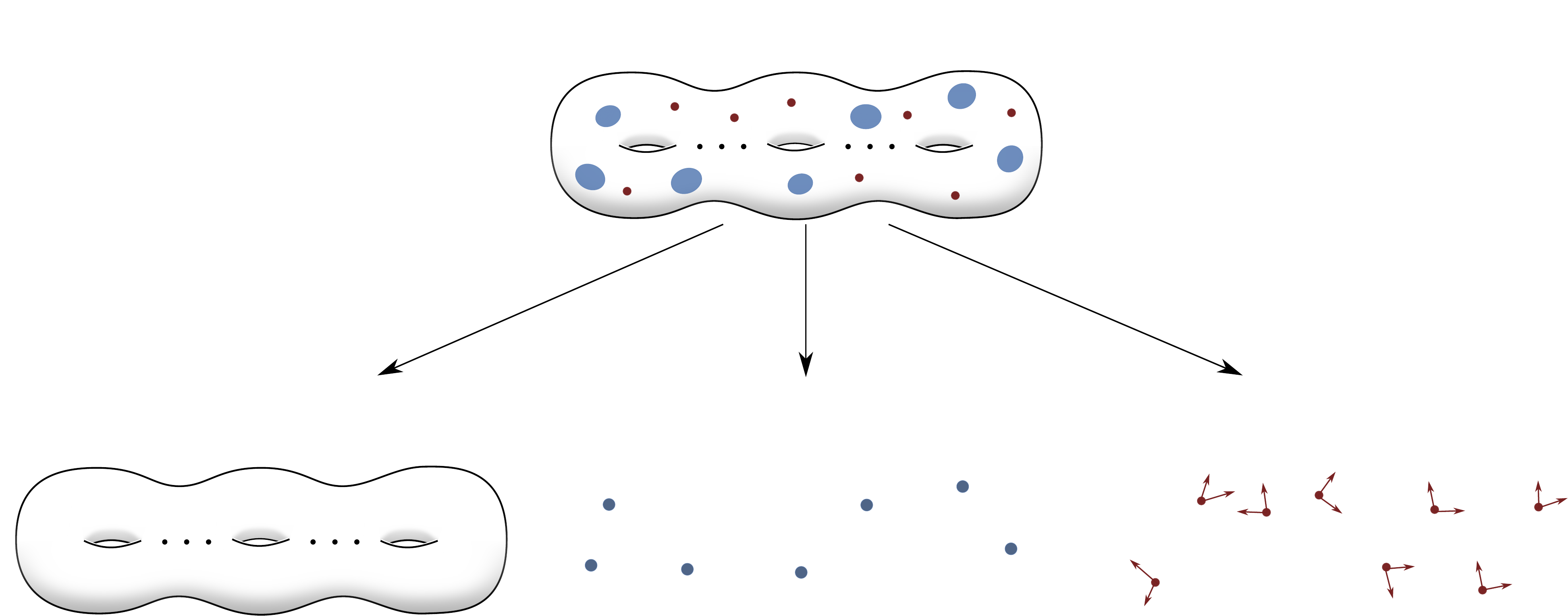
    \caption{Geometric representation of the decoupling map for the oriented moduli space of a surface $S_g$ as the product of three maps: forget the decorations, record the centre of the $m$ marked discs, and record the $k$ marked points and their oriented tangent spaces. For more details on the geometric interpretation see Section \ref{sec:geometric-view}.}
    \label{fig:decoupling}
    \end{figure}

\subsection{Tangential Structures}

Orientation, framings, spin structures and maps to a background space are all examples of a more general type of structure which can be described just from data on the tangent bundle of a manifold. A tangential structure is a topological space $\Theta$ equipped with a continuous $\GL_d$ action. A $\Theta$-structure on a $d$-dimensional manifold $W$ is a $\GL_d$-equivariant map $\rho_W:\Fr(TW)\to \Theta$, where $\Fr(TW)$ denotes the space of framings of the manifold $W$. A canonical example is $\Theta^{or}=\{\pm1\}$ with action given by multiplication with the sign of the determinant, and it is simple to see that a $\Theta^{or}$-structure on a manifold is a choice of orientation.

The space of all $\Theta$-structures has an action of $\diff(W)$ given by precomposition with the differential. Given a closed compact connected smooth manifold $W$ equipped with a $\Theta$-structure $\rho_W$, the moduli space $\mmm{M}^\Theta(W,\rho_W)$ of $W$ with $\Theta$-structures concordant to $\rho_W$ is defined as the path component of $\rho_W$ in the Borel construction
    \[\{\mbox{$\GL_d$-equivariant maps }\rho:\Fr(TW)\to \Theta\}\parallelsum \diff(W).\]
Examples of this construction are the classifying spaces $B\diff(W)$ and, when $W$ is orientable, $B\diff^+(W)$.

Analogously, the decorated moduli space of $(W,\rho_W)$, denoted $\mmm{M}^{\Theta,k}_m(W,\rho_W)$, is defined as the path component of $\rho_W$ in the Borel construction
    \[\{\mbox{$\GL_d$-equivariant maps }\rho:\Fr(TW)\to \Theta\}\parallelsum \diff^k_m(W).\]
    
If $W$ is a manifold with non-empty boundary, the moduli spaces $\mmm{M}^{\Theta}(W,\rho_W)$ and $\mmm{M}^{\Theta,k}_m(W,\rho_W)$ are defined analogously but only considering the $\GL_d$-equivariant maps $\rho:\Fr(TW)\to \Theta$ which agree with $\rho_W$ on $\Fr(TW_{|\partial W})$.
    
In this paper, we construct a decoupling map analogous to \eqref{eq:decoupling-map-for-surfaces}. For  $W$ a $d$-dimensional oriented manifold with non-empty boundary, this map is given by
    \[D:\begin{tikzcd}
    \mmm{M}^{\Theta,k}_m(W,\rho_W)\ar[rr, "F\times E_m \times E^k"] && \mmm{M}^\Theta(W,\rho_W)\times \Theta^m\parallelsum\Sigma_m\times (\Theta\parallelsum\GL_d^+)^k\parallelsum\Sigma_k
    \end{tikzcd}\]
The image of $D$ is a path-component of the codomain, which we denote 
    \[\mmm{M}^\Theta(W,\rho_W)\times \Theta^m_0\parallelsum\Sigma_m\times (\Theta\parallelsum\GL_d^+)^k_0\parallelsum\Sigma_k\]
and we show that, when a stabilisation condition is satisfied, the decoupling induces a homology isomorphism onto its image, in a range depending on the genus of $W$.

\subsection{Homology Stability}

A key ingredient in B\"odigheimer and Tillmann's proof for oriented surfaces is Harer's stability theorem \cite{MR786348}. It states that the map given by extending a diffeomorphism by the identity  
    \[\diff(S_{g,b+1})\to \diff(S_{g,b})\]
induces a map on classifying spaces which is homology isomorphism in degrees $\leq \frac{2}{3}g$ (the original bound by Harer was of $\frac{1}{3}g$, and the most recent bound is due to Randal-Williams \cite{randal2016resolutions}). Likewise, the more general decoupling result will depend on an analogous result for moduli spaces of manifolds with tangential structures: let $W$ be a compact connected $d$-dimensional manifold, and let $\rho_W$ be a fixed $\Theta$-structure on $W$. The manifold $W\setminus \int (D^d)$ is naturally endowed with a $\Theta$-structure $\rho_W'$ given by the restriction of $\rho_W$, and we have a map
    \begin{equation}\label{eq:homology-stability-map}
        \mmm{M}^\Theta(W\setminus\int (D^d),\rho_W')\to \mmm{M}^\Theta(W,\rho_W)
    \end{equation}
induced by extending a $\Theta$-structure by $\rho_W|_{_{D^d}}$ and a diffeomorphism by the identity. As for oriented surfaces, this map has been shown in many cases to induce a homology isomorphism in a range depending on the genus of $W$, for instance, this holds for surfaces with spin structures and framings. In dimensions $2n\geq 6$, this was shown to hold whenever $\rho_W$ is $n$-connected \cite[Corollary 1.7]{MR3665002}.

\subsection{Main results}

Throughout this paper, let $W$ be a compact, connected manifold of dimension $d\geq 2$.

\begin{MainThm}\label{main-decoupling}
    Let $W$ be an orientable manifold with non-empty boundary and $\rho_W$ be a fixed $\Theta$-structure on $W$. If the map \eqref{eq:homology-stability-map} induces a homology isomorphism in degrees $i\leq \alpha$ then the decoupling map 
    \[D:\mmm{M}^{\Theta,k}_{m}(W,\rho_W) \longrightarrow \mmm{M}^\Theta(W,\rho_W)\times \Theta^m_0\parallelsum\Sigma_m \times (\Theta\parallelsum\GL_d^+)_0^k\parallelsum\Sigma_k\]
    induces homology isomorphisms in degrees $i\leq \alpha$.
\end{MainThm}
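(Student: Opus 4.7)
The plan is to exhibit both the source and target of $D$ as total spaces of Serre fibrations over $\mmm{M}^\Theta(W, \rho_W)$, with $D$ a map of fibrations covering the identity, and apply the Serre spectral sequence comparison theorem to reduce the statement to a fibrewise comparison of labelled configuration spaces.

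Concretely, the forgetful map $F : \mmm{M}^{\Theta, k}_m(W, \rho_W) \to \mmm{M}^\Theta(W, \rho_W)$ is a fibration whose fibre at $[\rho_W]$ is naturally weakly equivalent to a labelled configuration space inside $W$: configurations of $k$ distinct points and $m$ disjoint parametrised discs, each decoration remembering the ambient $\Theta$-structure at its location (yielding labels in $\Theta\parallelsum \GL_d^+$ and $\Theta$ respectively). The projection from the target of $D$ to $\mmm{M}^\Theta(W, \rho_W)$ is a product projection, with fibre $\Theta^m_0 \parallelsum \Sigma_m \times (\Theta\parallelsum\GL_d^+)_0^k \parallelsum \Sigma_k$. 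Since configurations of finitely many points in $\mm{R}^\infty$ form a contractible $\Sigma_n$-free space, this fibre is canonically a model for the labelled configuration space in $\mm{R}^\infty$ with the same labels. The fibrewise map is then the natural map on labelled configuration spaces induced by a fixed embedding $W\hookrightarrow \mm{R}^\infty$, carrying labels along unchanged.

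To prove this fibrewise map is a homology isomorphism in degrees $i\le\alpha$, I would use homological stability. Iterating the map whose homological behaviour is controlled by \eqref{eq:homology-stability-map} produces a sequence $W \hookrightarrow W_1 \hookrightarrow W_2 \hookrightarrow \cdots$, with colimit an ``infinite'' manifold $W_\infty$. The hypothesis gives that each stabilisation is a homology isomorphism on the undecorated moduli space in the stated range. A parallel argument on the decorated side -- combining labelled configuration space stability in the spirit of McDuff with the undecorated hypothesis and a fibration-theoretic comparison -- upgrades this to the same stability range for $\mmm{M}^{\Theta,k}_m$, and hence for the fibre. In the limit $W_\infty$ contains arbitrarily large Euclidean regions, and a scanning/tube argument identifies the labelled configuration space in $W_\infty$ with the labelled configuration space in $\mm{R}^\infty$, making the fibre map of $D$ a homotopy equivalence in the limit. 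Transferring back along the stability isomorphisms then gives the result in degrees $i\le\alpha$.

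The main obstacle is the middle step: bootstrapping stability for the decorated moduli space from that of the undecorated one while controlling the $\Theta$-structure data. The coupling between the global $\Theta$-structure on $W$ and the infinitesimal $\Theta$-structure at each decoration means one cannot simply invoke an off-the-shelf labelled configuration space stability result; the argument must be run fibrewise over $\mmm{M}^\Theta(W,\rho_W)$, and the path-component indexing (the subscript $0$ in the codomain of $D$) must be tracked throughout. Fixing, once and for all, compatible $\Theta$-structures on the stabilising cobordisms so that all relevant squares strictly commute is the central bookkeeping task.
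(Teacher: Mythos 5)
Your plan fibers the wrong way, and that choice is precisely what creates the gap. You use the forgetful map $F$ to fiber $\mmm{M}^{\Theta,k}_m(W,\rho_W)$ over $\mmm{M}^\Theta(W,\rho_W)$, with fiber a labelled configuration space of points and discs inside $W$, and then need to show this fiber is homology-isomorphic (in degrees $\le\alpha$) to the labelled configuration space in $\mathbb{R}^\infty$. But the hypothesis you are given is about the map $\mmm{M}^\Theta(W\setminus\int D^d,\rho')\to\mmm{M}^\Theta(W,\rho_W)$ of \emph{moduli spaces}, not about configuration spaces; translating that hypothesis into a statement about configurations in $W$ versus $\mathbb{R}^\infty$ is not something the given assumption supplies. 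Your proposed repair -- build a tower $W\hookrightarrow W_1\hookrightarrow\cdots$, pass to a colimit $W_\infty$ with large Euclidean regions, and scan -- is not licensed by the hypothesis either: the given map goes from $W$ minus a disc \emph{into} $W$ (filling in), and iterating it does not produce a growing tower, let alone one whose colimit absorbs $\mathbb{R}^\infty$. There is also a circularity in the ``parallel argument on the decorated side,'' since stability for $\mmm{M}^{\Theta,k}_m$ is exactly the kind of statement the theorem is establishing.

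The paper avoids all of this by fibering in the other direction. Proposition~\ref{thefibration} shows the \emph{evaluation} map
\[
E\colon \mmm{M}^{\Theta,k}_m(W,\rho_W)\longrightarrow \Theta^m_0\parallelsum\Sigma_m\times(\Theta\parallelsum\GL_d^+)^k_0\parallelsum\Sigma_k
\]
is a fibration onto its image, with homotopy fiber $\mmm{M}^\Theta(W_{m+k},\rho_{W_{m+k}})$, i.e.\ the moduli space of the manifold with the decorations' tubular neighbourhoods deleted. The target of $D$ projects onto the same base with fiber $\mmm{M}^\Theta(W,\rho_W)$. Now $D$ covers the identity on the base and induces on fibers exactly the filling-in map
\[
\tau\colon \mmm{M}^\Theta(W_{m+k},\rho_{W_{m+k}})\longrightarrow \mmm{M}^\Theta(W,\rho_W),
\]
which is an iterate of \eqref{eq:homology-stability-map} and hence a homology isomorphism in degrees $\le\alpha$ by hypothesis. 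The spectral sequence comparison (Lemma~\ref{specseqargument}) then finishes the proof immediately. The point is that by putting the configuration/label data in the \emph{base} and the moduli spaces in the \emph{fiber}, the hypothesis applies directly at the fiber level, and no configuration-space stability, scanning, or stabilization tower is needed. You should restructure your argument around $E$ rather than $F$.
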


In particular, the theorem above gives us new results on moduli spaces of surfaces with tangential structures, generalising the decoupling result of  B\"odigheimer and Tillmann to surfaces with spin structures, maps to a background space, framings, amongst others.

The main corollary of Theorem \ref{main-decoupling} is obtained in the context of manifolds of high even dimension, where the assumption on the map \eqref{eq:homology-stability-map} has been shown to hold whenever $\rho_W$ is $n$-connected. 

Although this connectivity assumption is quite restrictive, it is still possible to obtain further results for more general tangential structures using the techniques of \cite[Section 9]{MR3665002}. In particular, we prove the following:

\begin{MainThm}\label{main-bdiffWg1}
    Let $W_{g,1}=\#_g S^n\times S^n\setminus D^{2n}$, for $2n\geq 6$. Then for all $i\leq \frac{g-4}{3}$
    \[H_i(B\diff^{+,k}_m(W_{g,1})) \cong   H_i(B\diff^+(W_{g,1})\times \SO[0,n-1]^m\parallelsum\Sigma_m\times B\SO(2n)\langle n\rangle^k\parallelsum\Sigma_k)\]
    where $\SO[0,n-1]$ is the $n$-truncation of $\SO$ and $B\SO(2n)\langle n\rangle$ is the $n$-connected cover of $B\SO(2n)$.
\end{MainThm}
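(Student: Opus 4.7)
The strategy is to apply Theorem~\ref{main-decoupling} with a carefully chosen tangential structure and then identify every $\Theta$-moduli space with the corresponding (decorated) classifying space of orientation-preserving diffeomorphisms. Let $\Theta$ be the $\GL_{2n}^+$-space
\[\Theta := B\SO(2n)\langle n\rangle \times_{B\SO(2n)} E\SO(2n),\]
so that $\Theta\parallelsum\GL_{2n}^+\simeq B\SO(2n)\langle n\rangle$ by construction, and whose underlying space is, via the long exact sequence of homotopy groups of the $n$-connected cover together with Bott periodicity, homotopy equivalent to the Postnikov truncation $\SO[0,n-1]$. Since $W_{g,1}$ is $(n-1)$-connected with cells only in dimensions $0$, $n$ and $2n$, the tangential classifying map $W_{g,1}\to B\SO(2n)$ admits a lift to $B\SO(2n)\langle n\rangle$; fix such a lift $\rho_W$. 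This $\rho_W$ is automatically an $n$-connected $\Theta$-structure, so Corollary~1.7 of \cite{MR3665002} guarantees that the stabilisation map \eqref{eq:homology-stability-map} is a homology isomorphism in the range $i\leq (g-4)/3$ (indeed in the strictly larger range $i\leq (g-3)/2$).

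Theorem~\ref{main-decoupling} then provides a homology isomorphism
\[\mmm{M}^{\Theta,k}_m(W_{g,1},\rho_W) \longrightarrow \mmm{M}^\Theta(W_{g,1},\rho_W)\times \Theta^m_0\parallelsum\Sigma_m\times (\Theta\parallelsum\GL_{2n}^+)^k_0\parallelsum\Sigma_k\]
in the same range. Since both $\Theta\simeq \SO[0,n-1]$ and $\Theta\parallelsum\GL_{2n}^+\simeq B\SO(2n)\langle n\rangle$ are path-connected, the path-component subscripts may be dropped, and the right-hand side becomes $\mmm{M}^\Theta(W_{g,1},\rho_W)\times \SO[0,n-1]^m\parallelsum\Sigma_m\times B\SO(2n)\langle n\rangle^k\parallelsum\Sigma_k$.

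It remains to show that the forgetful maps
\[\mmm{M}^\Theta(W_{g,1},\rho_W)\to B\diff^+(W_{g,1}),\qquad \mmm{M}^{\Theta,k}_m(W_{g,1},\rho_W)\to B\diff^{+,k}_m(W_{g,1})\]
are weak equivalences; this is the main step. Both are fibrations whose fibre is the same space $\Bun^\Theta_\partial(W_{g,1})$ of $\Theta$-lifts of the tangential map extending $\rho_W|_\partial$, since the decorations play no role in the underlying tangential lifting problem. Writing $F$ for the homotopy fibre of $B\SO(2n)\langle n\rangle\to B\SO(2n)$, obstruction theory identifies this fibre with $\Map_*(W_{g,1}/\partial, F)$, and the computation above shows that $F\simeq \SO[0,n-1]$ has trivial homotopy in degrees $\geq n$. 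As $W_{g,1}/\partial\simeq \#_g(S^n\times S^n)$ admits a CW-structure concentrated in dimensions $0$, $n$ and $2n$, the cofibre sequence $\bigvee^{2g} S^n\to \#_g(S^n\times S^n)\to S^{2n}$ induces a fibre sequence of based mapping spaces whose outer terms are $(\Omega^n F)^{2g}\simeq *$ and $\Omega^{2n} F\simeq *$, so $\Bun^\Theta_\partial(W_{g,1})$ is contractible and the forgetful maps are weak equivalences. Combined with the decoupling isomorphism and the factor identifications above, this yields the statement of Theorem~\ref{main-bdiffWg1}. The main obstacle is precisely this fibre analysis: the decoupling step is a formal consequence of Theorem~\ref{main-decoupling}, whereas contractibility of the fibre depends on a tight matching between the cellular dimensions of $W_{g,1}$ and the truncation range of $F$, a matching arranged exactly by taking $\Theta$ to be the $n$-connected cover.
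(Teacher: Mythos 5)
Your proposal reaches the correct conclusion but by a genuinely different route from the paper. The paper first proves the more general Proposition~\ref{prop:decoupling Wg1} by taking the Moore--Postnikov $n$-stage of $\lambda_W:\Fr(TW_{g,1})\to\Lambda$, invoking the Galatius--Randal-Williams comparison lemma (recalled here as the statement that $\Bun^\Theta_{\rho_\partial}(W)\parallelsum\hAut(u,\rho_\partial)\to\Bun^\Lambda_{\lambda_\partial}(W)$ is a homotopy equivalence onto its image) and then showing $\hAut(u,\rho_\partial)$ is contractible via Lemma~\ref{lemma:hAut}, since $(W_{g,1},\partial W_{g,1})$ is $(n-1)$-connected. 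Theorem~\ref{main-bdiffWg1} then drops out as a special case by identifying the factorization for $\Lambda=\{\pm 1\}$ as $W_{g,1}\to B\SO(2n)\langle n\rangle\to B\SO(2n)$. You instead pick (essentially) the same intermediate $\Theta\simeq\SO[0,n-1]$ by hand, apply Theorem~\ref{main-decoupling} directly, and then establish the equivalence $\mmm{M}^\Theta(W_{g,1},\rho_W)\simeq B\diff^+(W_{g,1})$ by showing the fibre $\Bun^\Theta_{\rho_\partial}(W_{g,1})$ is weakly contractible through an explicit cell-by-cell obstruction-theoretic computation. This is more elementary and self-contained than the paper's appeal to the $\hAut$ machinery, at the cost of only proving this one case rather than the general-$\Lambda$ Proposition~\ref{prop:decoupling Wg1}.

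There is one step in your argument that is stated too quickly. You identify $\Bun^\Theta_{\rho_\partial}(W_{g,1})$ with $\Map_*(W_{g,1}/\partial W_{g,1},F)$, where $F$ is the homotopy fibre of $B\SO(2n)\langle n\rangle\to B\SO(2n)$. This identification is not a formal consequence of obstruction theory: the space of relative lifts of $\ell_W:W_{g,1}\to B\SO(2n)$ only reduces to a based mapping space into the fibre after one knows the pulled-back fibration $\ell_W^*B\SO(2n)\langle n\rangle\to W_{g,1}$ is trivial (and that the boundary lift is in the path component of the constant section). The triviality follows because $W_{g,1}$ is parallelizable, which in turn holds because $W_{g,1}$ has the homotopy type of a wedge of $n$-spheres and $W_g$ is stably parallelizable; the condition that $\rho_W|_\partial$ be homotopic to a constant then follows from $\pi_{2n-1}(F)=0$. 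The paper invokes parallelizability at the analogous point, but in your writeup it is left implicit, and without it the key identification $\Bun^\Theta_{\rho_\partial}(W_{g,1})\simeq\Map_*(\#_g(S^n\times S^n),F)$ does not stand. Once that hypothesis is made explicit, your cofibre-sequence computation showing $\Map_*(\#_g(S^n\times S^n),F)$ is contractible (because $\Omega^nF$ and $\Omega^{2n}F$ are contractible) is correct, and the rest of the argument goes through.
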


We also provide a computation of the cohomology of $B\diff^{+,k}_m(W_{g,1})$ using Theorem~\ref{main-bdiffWg1}.

\subsection{More general decorations}

When studying surfaces, it is natural to look at decorations by marked points and discs, however for high dimensional manifolds, one is allowed to explore more general types of decorations. This has been studied for instance in the recent work \cite{palmer2012configuration, kupers2013homological, palmer2018homological,palmer2018homological2}. We generalise the definition of the decorated moduli space of a manifold with more general decorations, we define a decoupling map and show that under a homology stability hypothesis the decoupling map induces a homology isomorphism in a range. We analyse more closely the case that the decorations are unlinked circles, because of its relation to the literature and also its relevance for string theory. The moduli space of a manifold $W$ with $k$ embedded circles and $\Theta$-structure is denoted $\mmm{M}^{\Theta}_{kS^1}(W,\rho_W)$ and is defined to be the moduli space of manifolds diffeomorphic to $W$, equipped with a $\Theta$-structure and $k$ marked unparametrised circles. As in Theorem \ref{main-decoupling}, we also get a splitting result, now in terms of the space of configurations of circles in $\mm{R}^\infty$ with labels on a space $\mmm{L}$ (see Definition~\ref{def: config of circles}), which we denote $\confks(\mm{R}^\infty,\mmm{L})$.

\begin{MainThm}\label{main-circle-decorations}
   Let $W$ be a simply-connected spin manifold of dimension $2n\geq 6$ with non-empty boundary, equipped  $k$ marked unparametrised circles and with a $\Theta$-structure $\rho_W:\Fr(TW)\to \Theta$ which is $n$-connected and such that $\Theta\parallelsum\GL_d$ is simply-connected. Then for all $i\leq \frac{g-4}{3}$
    \[H_i(\mmm{M}^{\Theta}_{kS^1}(W,\rho_W)) \cong H_i(\mmm{M}^{\Theta}(W,\rho_W)\times \confks(\mm{R^\infty};(L\Theta\parallelsum L_{null}\GL_{d-1}^+)_0))\]
    where $L-$ is the free loop space, $L_{null}$ is the subspace of nullhomotopic loops, and $(-)_0$ indicates a path-component that is specified in the proof.
\end{MainThm}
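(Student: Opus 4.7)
The plan is to derive Theorem \ref{main-circle-decorations} as a specialisation of the general decoupling theorem for submanifold decorations established earlier in the paper (in the section on more general decorations), applied with the decorating submanifold type being an unparametrised circle. That general theorem produces, under a homological stability hypothesis, a decoupling map
\[D:\mmm{M}^{\Theta}_{kS^1}(W,\rho_W) \longrightarrow \mmm{M}^{\Theta}(W,\rho_W) \times \confks(\mm{R}^\infty;\mmm{L})\]
for an appropriate label space $\mmm{L}$ encoding the $\Theta$-structured tubular neighbourhood data of an unparametrised $S^1$, and shows that $D$ is a homology isomorphism onto its image in a range controlled by the stability hypothesis for $\mmm{M}^{\Theta}_{kS^1}(W,\rho_W)$.

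The first step is to identify the label space $\mmm{L}$. For an embedded unparametrised circle the relevant data is the restriction of the $\Theta$-structure along the circle together with a framing of its normal bundle, all taken up to the $\diff(S^1)$-reparametrisation action. The $\Theta$-structure restricted to $S^1$ is a $\GL_1$-equivariant map $\Fr(TS^1)\to\Theta$, which after quotienting by reparametrisation gives the free loop space $L\Theta$, while the normal framing contributes loops in $\GL_{d-1}^+$, so that $\mmm{L}$ is a Borel-type construction $L\Theta\parallelsum L\GL_{d-1}^+$. Under the hypotheses that $W$ is simply-connected spin and that $\Theta\parallelsum\GL_d$ is simply-connected, any embedded circle in $W$ is null-homotopic, its normal bundle is trivial, and its normal framing loop is null-homotopic in $\GL_{d-1}^+$; hence the image of $D$ lies in a single path-component of the label space, namely the one where the $\GL_{d-1}^+$-part lives in $L_{null}\GL_{d-1}^+$, which is the component $(-)_0$ appearing in the statement.

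The second step is to verify the homological stability hypothesis in this setting. Since $\rho_W$ is $n$-connected with $2n\geq 6$, Galatius--Randal-Williams \cite[Corollary 1.7]{MR3665002} gives homology stability for $\mmm{M}^{\Theta}(W,\rho_W)$ in degrees $i\leq(g-4)/3$; to extend this to $\mmm{M}^{\Theta}_{kS^1}(W,\rho_W)$ one adapts the techniques of \cite[Section 9]{MR3665002}, using that, because $d=2n\geq 6$, the $k$ decorating circles can be isotoped into a small region disjoint from where the manifold is stabilised by attaching $S^n\times S^n$ handles, so that the corresponding stabilisation map remains a homology isomorphism in the same range. The main obstacle in the argument is the geometric identification of $\mmm{L}$ and, crucially, the proof that embedded circles in $W$ always produce null-homotopic normal framing loops: this requires controlling the obstruction class in $\pi_1(\GL_{d-1}^+)$ using the interaction between the spin and simply-connectedness of $W$, the simply-connectedness of $\Theta\parallelsum\GL_d$, and the $n$-connectedness of $\rho_W$. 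Once the general decoupling theorem is specialised, the label space correctly identified, and stability verified, the theorem follows.
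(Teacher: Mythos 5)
Your overall strategy matches the paper's: Theorem C is indeed obtained by specialising the general $L$-decorated decoupling theorem (Theorem~\ref{decouplingL}, via Corollary~\ref{cor:decoupling-sub-high-dim}) to $L=kS^1$, identifying $\Map_{\GL_d}(\Fr(TW|_{kS^1}),\Theta)$ with $L(\Theta)^k$, modelling the Borel construction over $\diff(kS^1)$ as the configuration space $\confks(\mm{R}^\infty;-)$, and verifying the stability hypothesis using Lemma~\ref{lemma:genus} to see that removing a tubular neighbourhood of the circles does not change the stable genus.

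However, your explanation of why $L_{null}\GL_{d-1}^+$ appears is conceptually off, and this is the heart of the proof. You say that ``any embedded circle in $W$ is null-homotopic, its normal bundle is trivial, and its normal framing loop is null-homotopic in $\GL_{d-1}^+$; hence the image of $D$ lies in a single path-component of the label space.'' This is not what is going on: $L_{null}\GL_{d-1}^+$ is not a path component of the full label space, and the constraint does not come from a property of a single embedded circle. Rather, in Theorem~\ref{decouplingL} the codomain involves a Borel construction over the group $G\subset \iso(\nu_{kS^1})$, and the theorem is applied with $G=\ima e_{kS^1}$. The appearance of $L_{null}$ records the fact, established in Lemma~\ref{surjectivity}, that every diffeomorphism of $W$ preserving the marked circles induces a loop in $\GL_{d-1}$ on the normal bundle that is \emph{nullhomotopic precisely when $W$ is spin}. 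The proof of that lemma is nontrivial: given $\phi\in\diff_{S^1}(W)$ restricting to the identity on the circle, one forms a map $g:S^2\to W$ by gluing the bounding disc to its $\phi$-image, observes that $D\phi$ along the circle is a clutching function for $g^*TW$, and uses $w_2(g^*TW)=g^*w_2(TW)=0$ (the spin hypothesis) together with $d\geq 5$ to conclude that this clutching function is nullhomotopic; the converse when $W$ is not spin is also shown. Your proposal names the conclusion but does not give — and, as phrased, misdescribes — the actual argument, which is the technical core of the theorem.

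Two smaller inaccuracies: the hypothesis that $\Theta\parallelsum\GL_d$ is simply connected is used to invoke the Galatius--Randal-Williams homological stability theorem (Theorem~\ref{thm:hom-st-high-dim}), not to control normal framing loops or the label space; and the separation of $L_{null}$ from the path-component indicator $(-)_0$ matters, since after fixing $G=C^\infty_{null}(S^1,\GL_{d-1})^k\rtimes\diff(kS^1)$ one still takes the path component of the image of $\rho_W$ in the resulting Borel construction, which is what $(-)_0$ denotes.
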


\subsection{Geometric Interpretation}\label{sec:geometric-view}

The spaces and maps used in the decoupling result all have a very concrete geometrical interpretation, which we briefly introduce. 

Let $\Theta^*=*$ be the point with the trivial $\GL_d$-action, then any manifold $W$ admits a unique $\Theta^*$-structure $\rho_W$. By definition, the moduli space $\mmm{M}^{\Theta^*}(W,\rho_W)$ is equivalent to $B\diff(W)$. So we can obtain a geometric interpretation for this moduli space from the specific model of $B\diff(W)$ given by the quotient $\emb(W,\mm{R}^\infty)/\diff(W)$. Then $\mmm{M}^{\Theta^*}(W)$ can be seen as the subspace of all submanifolds of $\mm{R}^\infty$ that are abstractly diffeomorphic to $W$. Analogously, fixing an arbitrary $\Theta$-structure, $\rho_W$, on a manifold $W$ (for instance a choice of orientation), the moduli space $\mmm{M}^\Theta(W,\rho_W)$ has a model as the space of all submanifolds of $\mm{R}^\infty$ that are diffeomorphic to $W$ together with a choice of $\Theta$-structure concordant to $\rho_W$. A detailed description of this model can be found in sections 6 and 7 of \cite{MR3718454}.

Through this perspective, the decorated moduli space $\mmm{M}^{\Theta,k}_m(W,\rho_W)$ is the space of all submanifolds of $\mm{R}^\infty$ that are diffeomorphic to $W$ together with $k$ marked points, $m$ marked parametrized discs, and a choice of $\Theta$-structure concordant to $\rho_W$.

Moreover, note that the spaces $\Theta^m_0\parallelsum\Sigma_m$ and $(\Theta\parallelsum\GL_d)_0^k\parallelsum\Sigma_k$ also have geometric models in terms of unordered configuration spaces with labels, which we denote respectively by $C_m(\mm{R}^\infty,\Theta_0)$ and $C_k(\mm{R}^\infty,(\Theta\parallelsum\GL_d)_0)$.

The decoupling map in Theorem \ref{main-decoupling}, is the product of three maps:
$$\mmm{M}^{\Theta,k}_m(W,\rho_W)\rightarrow \mmm{M}^{\Theta}(W,\rho_W)$$
is the map that simply forgets the decorations; the map 
$$\mmm{M}^{\Theta,k}_m(W,\rho_W)\rightarrow C_m(\mm{R}^\infty,\Theta_0)$$
records the centre of the marked discs together with local tangential structure information; and finally 
$$\mmm{M}^{\Theta,k}_m(W,\rho_W)\rightarrow C_k(\mm{R}^\infty,(\Theta\parallelsum\GL_d)_0)$$
is the map that records the positions of the marked points in $\mm{R}^\infty$ together with their tangent spaces and tangential structure information. See Figure \ref{fig:decoupling} for an illustration of these maps.

With this geometric interpretation, the decoupling result tells us that the homology of the space of decorated submanifolds of $\mm{R}^\infty$ of a fixed diffeomorphism type, in which the points and discs are constrained to our manifolds, can be understood in terms of the homology of a space where these points and discs are not constrained anymore, ie. they are decoupled.

\subsection{Outline of the paper}

Section \ref{section:preliminaries} recalls the basic concepts and results needed throughout the paper. We start by defining and giving examples of tangential structures and the topological moduli space of manifolds. Further, we prove auxiliary results on fibre sequences of Borel constructions and a spectral sequence argument which will be needed throughout the paper.

We define the decorated moduli space of manifolds and the decoupling map, and we prove Theorem \ref{main-decoupling} in Section \ref{section:proof-of-decoupling} . Subsequently, in Section~\ref{section:corollaries}, we prove the corollaries of Theorem \ref{main-decoupling} which provide decoupling results for surfaces with many tangential structures, as well as for manifolds of dimension $2n\geq 6$ with well-behaved tangential structures.

In Section \ref{section:decoupling-submanifolds} we define the generalisation of the decorated moduli space for more general types of submanifold decorations, define the decoupling map and prove the decoupling theorem in this case. We look closer at the case when decorations are embbedded unlinked circles and prove Theorem \ref{main-circle-decorations}.

Finally, in Section \ref{section:general-tangential-structures} we look at high dimensional manifolds with tangential structures that fail the hypothesis of Theorem \ref{main-decoupling} and provide a generalisation of the result for these cases. In particular, by applying this result to the manifolds $W_{g,1}$, we show Theorem \ref{main-bdiffWg1}.

\subsection*{Acknowledgements} I would like to thank my supervisor Ulrike Tillmann for her support and for the many useful conversations. In addition, I would like to thank Jan Steinebrunner and Tomáš Zeman for all the helpful discussions and comments. This work was carried out with the support of CNPq (201780/2017-8) and produced while the author was in residence at the Mathematical Sciences Research Institute in Berkeley, California, during the Spring semester of 2020.

\section{Preliminaries}\label{section:preliminaries}

\noindent In this section we recall the definition of tangential structures and give the examples that will be used in Section \ref{section:corollaries}. We also recall the moduli space of manifolds with tangential structures. We  also recall basic results about descending fibre sequences to homotopy quotients as well as a spectral sequence argument that will be used throughout the paper.

\subsection{Tangential structures}\label{sec:tangential-structures}
Throughout, we consider $W$ to be a smooth compact connected $d$-dimensional manifold, possibly with non-empty boundary. If $W$ is a closed manifold, we denote by $\diff(W)$ the group of diffeomorphisms of $W$ with Whitney $C^\infty$ topology. If $W$ has non-empty boundary, we assume it to be equipped with a collar neighbourhood of $\partial W$ and we denote by $\diff(W)$ the group of diffeomorphisms of $W$ which restrict to the identity on this collar. If moreover $W$ is an orientable manifold, we denote by $\diff^+(W)$ the subgroup of orientation preserving diffeomorphisms. Note that if $W$ has a non-empty boundary, any element of $\diff(W)$ is automatically orientation preserving.

Given a vector bundle $p:E\to B$, the frame bundle of $E$ over $B$ will be denote by $\Fr(E)$. Recall that the fiber of $\Fr(E)\to B$ over a fixed $b$ is the space of ordered bases of $p^{-1}(b)$, and this forms a $\GL_d$-principal bundle, with the action $A\cdot (v_1,\dots,v_n)=(A(v_1),\dots,A(v_n))$, for $A\in \GL_d$ and $(v_1,\dots,v_n)$ an ordered basis of $p^{-1}(b)$. Throughout this paper, we denote by $TW$ the tangent bundle of the manifold $W$, and by $\varepsilon^n\to B$ the trivial $n$-dimensional vector bundle over some base space $B$. In this paper, we will consider only real vector bundles. 

We now define tangential structures following the terminology established by Galatius and Randal-Williams in \cite{galatius2018moduli}.

\begin{definition}
    A \emph{tangential structure} for $d$-dimensional manifolds is a space $\Theta$ with a continuous action of $\GL_d\coloneqq\GL_d(\mm{R})$. A $\Theta$-structure on a $d$-manifold $W$ is a $\GL_d$-equivariant map $\rho:\Fr(TW)\to \Theta$. 
\end{definition}

Manifolds are usually equipped with data that can be described using tangential structures:

\begin{examples}\label{ex:tangential-structures} Let $W$ be a connected manifold.
    \begin{enumerate}[(a)]
        \item\label{itm:orientation-theta} An orientation consists of a coherent choice of which oriented bases of the tangent spaces are considered positive. Namely, this is the data of a $\GL_d$-equivariant map $\Fr(TW)\to \{\pm1\}$, where the action on $\Theta^{or}:=\{\pm1\}$ is given by multiplication by the sign of the determinant. Therefore, a $\Theta^{or}$-structure on a manifold is equivalent to a choice of orientation.
        
        \item\label{itm:trivial-theta} If we want to consider all manifolds with no extra data, we can use the trivial tangential structure $\Theta^*=\{*\}$ with the trivial action. Any manifold admits a unique $\Theta^*$-structure, and therefore it encodes no extra data. 
        
        \item\label{itm:framimings-theta} Framings on a manifold also are a tangential structure described by $\Theta^\fr=\GL_d$, since the data of a framing is precisely a continuous choice of basis for the tangent space at each point, which can be expressed as a $\GL_d$-equivariant map $\Fr(TW)\to \GL_d$.
        
        \item\label{itm:background-theta} Given a space $X$, we define the tangential structure of maps to $X$, by taking $\Theta_X=X$ with the trivial action of $\GL_d$. Then a $\Theta_X$-structure on a manifold $W$ is the data of a continuous a map $W\to X$.
    \end{enumerate}
\end{examples}

\begin{remark}\label{rmk:tangential-structures}
Many authors approach tangential structures in a different way, namely by defining it as a fibration $\theta:B\to B\O(d)$, and by setting a $\theta$-structure on a manifold $W$ to be a map $W\to B$ lifting the map $W\to B\O(d)$ which classified $TW$. There is a clear way of exchanging the two approaches using the correspondence between spaces with a $\GL_d$ action and spaces over $B\GL_d\simeq B\O(d)$, made through the principal $\GL_d$-bundle $E\GL_d\to B\GL_d$. Both the spaces $\Theta$ and $B$ associated to a given tangential structure will come into the decoupling result, so it is worth making precise the relation between them: given a fibration $\theta$, the pullback space 
\[\Theta:= E\GL_d\timesover{B{\tiny\GL}_d} B\]
is naturally equipped with a $\GL_d$ action. On the other hand, given a $\GL_d$-space $\Theta$, we can define $B$ as the Borel construction $\Theta\parallelsum \GL_d$ (ie. the quotient of $E\GL_d\times \Theta$ by the diagonal action of $\GL_d$). Then $E\GL_d\times \Theta\to B$ is a principal $\GL_d$-bundle, which means $B$ comes equipped with a map $\theta:B\to B\GL_d$. Since $E\GL_d$ is contractible, these processes are inverse up to equivariant fibre-wise weak equivalence.
\end{remark}

\begin{example}\label{ex:spin}
    Spin structures on an $n$-dimensional manifold are known to be classified by lifts along the fibration $\theta_{\Spin}:B\Spin\to B\O(d)\simeq B\GL_d$. So the corresponding $\Theta^\Spin$ is the pullback space
    \[E\GL_d\timesover{B{\tiny\GL}_d} B\Spin\]
    and it fits into the following diagram of fibre sequences
    \[\begin{tikzcd}
        \{\pm 1\}\times B\mm{Z}/2\ar[r] \ar[d, equal] & \Theta^\Spin \ar[r] \ar[d] \arrow[dr,phantom, "\lrcorner", very near start] & E\GL_d\ar[d]\\
        \{\pm 1\}\times B\mm{Z}/2\ar[r]               & B\Spin       \ar[r]        & B\GL_d
    \end{tikzcd}\]
    which implies that the space $\Theta^\Spin$ is homotopy equivalent to $\{\pm 1\}\times B\mm{Z}/2$. 
\end{example}

\begin{definition}
    Let $W$ be a closed manifold and $\Theta$ a fixed tangential structure. We define the \emph{space of $\Theta$-structures on $W$}, denoted $\Bun^\Theta(W)$, to be the space of all $\GL_d$-equivariant maps $\Fr(TW)\to \Theta$ equipped with the compact-open topology.

    Let $W$ be a manifold with non-empty boundary and a collar together with a $\GL_d$-equivariant map $\rho_{\partial}:\Fr(T\partial W\oplus \varepsilon)\to \Theta$. We define the \emph{space of $\Theta$-structures on $W$ restricting to $\rho_\partial$}, denoted $\Bun^\Theta_{\rho_\partial}(W)$, to be the space of all $\GL_d$-equivariant maps $\Fr(TW)\to \Theta$ that restrict to $\rho_\partial$ on $\partial W$.
\end{definition}

Given a manifold $W$ with non-empty boundary together with a $\GL_d$-equivariant map $\rho_{\partial}:\Fr(T\partial W\oplus \varepsilon)\to \Theta$, it is possible that the space $\Bun^\Theta_{\rho_\partial}(W)$ is empty, in the case where the chosen map $\rho_\partial$ cannot be extended to $\Fr(TW)$. An example, if $W$ is an orientable manifold with disconnected boundary and $\rho_\partial$ assigns non-compatible orientations for the different components of $\partial W$. These are not the cases we are interested in, and therefore throughout the paper, whenever $W$ is a manifold non-empty boundary we assume it comes equipped with a map $\rho_\partial$ which is the restriction of a $\Theta$-structure in $W$. 

\begin{examples}\label{ex:bun-spaces} Let $W$ be a manifold.
    \begin{enumerate}[(a)]
        \item If $\Theta^*$ is the trivial tangential structure of Example~\ref{ex:tangential-structures}\ref{itm:trivial-theta}, then there is only one $\Theta^*$-structure for any manifold, so $\Bun^{\Theta^*}(W)$ is a single point.
        
        \item\label{itm:name} Consider the tangential structure $\Theta^{or}$ for orientation described in Example~\ref{ex:tangential-structures}\ref{itm:orientation-theta}. If $W$ is a closed orientable manifold, it admits two $\Theta^{or}$-structures which implies that $\Bun^{\Theta^{or}}(W)$ consists of two points. On the other hand, if $W$ has a non-empty boundary and $\rho_\partial$ is a fixed $\Theta$ structure on $\partial W$, then $\Bun^{\Theta^{or}}_{\rho_\partial}(W)$ consists only of those $\GL_d$-equivariant maps $\Fr(TW)\to \Theta$ which restrict to $\rho_\partial$, and therefore consists of a single point.
        
        \item Given a space $X$, consider the tangential structure $\Theta_X$ defined in Example~\ref{ex:tangential-structures}\ref{itm:background-theta}. As discussed before, a $\Theta_X$ structure on a manifold $W$ is just a continuous map $W\to X$, and therefore, if $W$ is closed, $\Bun^{\Theta_X}(W)$ is the space of continuous maps from $W$ to $X$.
    \end{enumerate}
\end{examples}

\subsection{Moduli spaces of manifolds}\label{sec:recall-moduli spaces}

The action of the diffeomorphism group of $W$ on the tangent bundle $TW$ induces an action on the space $\Bun^\Theta(W)$ for any tangential structure $\Theta$. Explicitly, given $\phi\in\diff(W)$ and $\rho\in\Bun^\Theta(W)$,
    \[\phi\cdot \rho=\rho\circ D\phi^{-1}\]
where $D\phi:\Fr(TW)\to\Fr(TW)$ is the map induced by the differential of $\phi$.

\begin{definition}
    Let $W$ be a closed manifold and fix $\rho_W$ a $\Theta$-structure on $W$, we define $\Bun^\Theta(W,\rho_W)$ to be the orbit of the path-component of $\rho_W$ in $\Bun^\Theta(W)$ under the action of the diffeomorphism group $\diff(W)$. If $W$ has non-empty boundary $\Bun^\Theta(W,\rho_W)$ is defined to be the orbit of the path-component of $\rho_W$ in $\Bun^\Theta_{\rho_\partial}(W)$, where $\rho_\partial$ is the restriction of $\rho_W$ to the boundary.
    
    We define the \emph{moduli space of $W$ with $\Theta$-structures concordant to $\rho_W$} to be the Borel construction (ie. homotopy orbit space)
        \[\mmm{M}^\Theta(W,\rho_W):= \Bun^\Theta(W,\rho_W)\parallelsum \diff(W).\]
\end{definition}

\begin{remark}
     In the above definition, when $W$ is a manifold with boundary and $\rho_W$ a fixed $\Theta$-structure, we have omitted the symbol $\rho_\partial$ from the notation for the space $\Bun^{\Theta}(W,\rho_W)$. However, it should always be understood that there is a fixed boundary condition which is determined by  the restriction of the fixed $\rho_W$ to the boundary.
\end{remark}

The most important examples of these moduli spaces come from the simplest tangential structures: for the trivial tangential structure $\Theta^*$, the space $\Bun^{\Theta^*}(W)$ consists of a single point for any $W$ and therefore $\mmm{M}^{\Theta^*}(W,\rho_W)$ will be simply the classifying space $B\diff(W)$. On the other hand, if $W$ is an orientable manifold $\Bun^{\Theta^{or}}(W,\rho_W)$ consists of either one or two points depending on whether $\diff(W)$ has an element that reverses the orientation of $W$. In either case, the moduli space $\mmm{M}^{\Theta^{or}}(W,\rho_W)$ is homotopy equivalent to $B\diff^+(W)$.

\subsection{A lemma on fibre sequences and homotopy quotients}

In this section we prove a lemma that will be used throughout the paper to construct fibre sequences of moduli spaces from equivariant fibre sequences of diffeomorphism groups and spaces of $\Theta$-structures.

\begin{lemma}\label{2-out-of-3-for-fibrations}
    Given a commutative diagram 
        \[\begin{tikzcd}
            X \ar[rd, "f"] \ar[d, "h"] & \\
            Y \ar[r, "g"] & Z
        \end{tikzcd}\]
    such that $f$ and $h$ are Serre fibrations and $h$ is surjective, then $g$ is also a Serre fibration.
\end{lemma}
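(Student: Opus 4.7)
To show that $g$ is a Serre fibration, the plan is to verify the homotopy lifting property against each disk inclusion $D^n \times \{0\} \hookrightarrow D^n \times I$. That is, given a map $a: D^n \to Y$ and a homotopy $H: D^n \times I \to Z$ with $H|_{D^n \times \{0\}} = g \circ a$, I need to produce a lift $\tilde H: D^n \times I \to Y$ with $g \circ \tilde H = H$ and $\tilde H|_{D^n \times \{0\}} = a$. The overall strategy is to first lift $a$ along $h$ to a map $\tilde a: D^n \to X$, then apply the homotopy lifting property of $f$ to lift $H$ to $\tilde H': D^n \times I \to X$ with $\tilde H'|_{D^n\times\{0\}} = \tilde a$, and finally define $\tilde H := h \circ \tilde H'$. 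The relation $f = g \circ h$ immediately gives $g \circ \tilde H = f \circ \tilde H' = H$, while $\tilde H|_{D^n\times\{0\}} = h \circ \tilde a = a$.

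The main obstacle is the first step, namely lifting the single map $a: D^n \to Y$ through $h$, since the homotopy lifting property concerns lifting homotopies rather than individual maps. To obtain such a lift, I would exploit the contractibility of $D^n$: pick a basepoint $p \in D^n$, set $y_0 := a(p)$, and use surjectivity of $h$ to choose $x_0 \in h^{-1}(y_0)$. The straight-line contraction of $D^n$ to $p$ yields a homotopy $F: D^n \times I \to Y$, $F(x,t) = a(p + t(x-p))$, satisfying $F_0 \equiv y_0$ and $F_1 = a$. Since $h$ is a Serre fibration, $F$ can be lifted to $\tilde F: D^n \times I \to X$ with $\tilde F_0 \equiv x_0$, and then $\tilde a := \tilde F_1$ is the required lift of $a$. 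This step is precisely where both hypotheses on $h$ (Serre fibration and surjectivity) are needed together, with contractibility of $D^n$ compensating for the absence of a global section of $h$.

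With $\tilde a$ in hand, the remainder of the argument is formal: the lifting problem for $g$ reduces to a lifting problem for $f$ against the same cofibration $D^n \times \{0\} \hookrightarrow D^n \times I$, which is solvable because $f$ is a Serre fibration. Composing the resulting lift with $h$ produces $\tilde H$ and completes the verification. The argument is natural in $n$, so no additional care is needed to handle the degenerate case $n = 0$, where the construction reduces directly to the surjectivity of $h$.
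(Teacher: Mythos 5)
Your proof is correct and follows the same strategy as the paper's: reduce the homotopy lifting problem for $g$ to one for $f$ by first lifting the initial map $D^n\to Y$ through $h$, using the contractibility of $D^n$ together with the surjectivity and Serre-fibration property of $h$. The only cosmetic difference is that you produce the lift in a single application of the HLP for $h$ via the straight-line contraction of $D^n$, whereas the paper obtains the same lift by induction on $n$ through the identification $D^n\cong D^{n-1}\times I$; both hinge on exactly the same ingredients.
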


\begin{proof}
    We will show that $g$ has the homotopy lifting property with respect to any inclusion $D^i\times\{0\}\hookrightarrow D^{i}\times I$ using that both $f$ and $h$ have this property.
        \[\begin{tikzcd}
                &   X \ar[d, "h"] \ar[dd, bend left=49, "f"]\\
            D^{i}\times \{0\} \ar[r] \arrow[d, hook] & Y \ar[d, "g"] \\ 
            D^i\times I \ar[r] \ar[ru, dashed] & Z
        \end{tikzcd}\]
    
    Given a lift of $D^i\times\{0\}\to Y$ to $X$, we can construct a lift $\ell: D^i\times I\to X$ using that $f$ is a Serre fibration. Then $h\circ \ell:D^i\times I\to Y$ is a lift with respect to $g$. It only remains to see that any map $D^i\times\{0\}\to Y$ admits a lift to $X$, which we can prove by induction on $i$: for $i=0$, this is precisely the condition that $h$ is surjective, for $i>0$, this lift can be obtained using the identification $D^i\simeq D^{i-1}\times I$ and the fact that $h$ is a Serre fibration. 
\end{proof}

\begin{lemma}\label{fibration-descends-to-quotients} Let $G_i$ be a topological group and $p_i:M_i \to M_i/G_i$ be a $G_i$-principal bundle, for $i=1,2,3$.
    \begin{enumerate}[(a)]
        \item\label{itm:fibration-descends} If $\phi:G_2\to G_3$ is a continuous homomorphism and $f:M_2\to M_3$ is a $\phi$-equivariant fibration, then the induced map
            \[\psi:M_2/G_2 \rightarrow M_3/G_3\]
        is a fibration.
        \item\label{itm:fibre-over-descent} Given a short exact sequence
            \[0\to G_1 \to G_2 \to G_3 \to 0\]
         and a fibre sequence of equivariant maps
            $M_1 \to M_2 \to M_3$,
        the induced maps on quotients form a fibre sequence
            \[M_1/G_1 \rightarrow M_2/G_2 \rightarrow M_3/G_3\]
    \end{enumerate}
\end{lemma}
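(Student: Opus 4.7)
For part (a), the plan is to apply Lemma \ref{2-out-of-3-for-fibrations} to the commutative triangle with apex $M_2$, whose two legs are the principal bundle projection $p_2\colon M_2 \to M_2/G_2$ and the composition $p_3 \circ f\colon M_2 \to M_3/G_3$, and whose base is $\psi\colon M_2/G_2 \to M_3/G_3$. The map $p_2$ is a surjective Serre fibration because it is a $G_2$-principal bundle, and $p_3 \circ f$ is a Serre fibration as a composition of the $G_3$-principal bundle $p_3$ with the hypothesised fibration $f$. Lemma \ref{2-out-of-3-for-fibrations} will then yield directly that $\psi$ is a Serre fibration.

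For part (b), part (a) already provides that $\psi\colon M_2/G_2 \to M_3/G_3$ is a Serre fibration, so the task reduces to identifying the fibre over a chosen basepoint $[m_3]\in M_3/G_3$ with $M_1/G_1$. Since $M_1$ sits inside $M_2$ as the fibre $f^{-1}(m_3)$, and this inclusion is equivariant with respect to $G_1\hookrightarrow G_2$, composing with $p_2$ yields a $G_1$-invariant map $M_1 \to M_2/G_2$ that lands in $\psi^{-1}([m_3])$. This descends to a continuous map
\[\iota\colon M_1/G_1 \longrightarrow \psi^{-1}([m_3]),\]
and the plan is to show $\iota$ is a homeomorphism.

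For bijectivity, surjectivity would follow from the surjectivity of $G_2 \to G_3$: for any $[m_2]\in \psi^{-1}([m_3])$ one has $f(m_2) = g_3 \cdot m_3$ for some $g_3\in G_3$; lifting to $g_2\in G_2$ gives $f(g_2^{-1}m_2) = m_3$, so $g_2^{-1}m_2 \in M_1$ represents $[m_2]$. For injectivity, if $m, m' \in M_1$ and $m' = g_2\cdot m$ in $M_2$, then $m_3 = f(m') = g_2\cdot m_3$, and freeness of the $G_3$-action on $M_3$ (from the principal bundle hypothesis) forces the image of $g_2$ in $G_3$ to be trivial, so $g_2 \in G_1$ by exactness, hence $[m] = [m']$ in $M_1/G_1$.

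The main obstacle will be upgrading the continuous bijection $\iota$ to a homeomorphism, i.e.\ verifying that the subspace topology on $\psi^{-1}([m_3])\subset M_2/G_2$ agrees with the quotient topology on $M_1/G_1$. I expect this to follow from local triviality of the principal bundle $p_3\colon M_3 \to M_3/G_3$: a local section of $p_3$ over a neighbourhood of $[m_3]$ produces an explicit continuous inverse to $\iota$ near the basepoint, by combining it with the surjectivity argument above to land back in $M_1$. Writing this out carefully is the most technical step, but once done it completes the identification of the fibre and hence the fibre sequence.
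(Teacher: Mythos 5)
Your part~(a) is exactly the argument in the paper: apply Lemma~\ref{2-out-of-3-for-fibrations} to the triangle with apex $M_2$, legs $p_2$ and $p_3\circ f$, and base $\psi$, using surjectivity of $p_2$.

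For part~(b) you also follow the paper's route: identify the fibre of $\psi$ with $M_1/G_1$, with the bijectivity argument resting on freeness of the $G_3$-action (your injectivity step is verbatim the paper's observation that $(g\cdot i(M_1))\cap i(M_1)=\emptyset$ for $g\notin\ker\phi$). Where you differ is that you explicitly flag the issue of whether the continuous bijection $\iota:M_1/G_1\to\psi^{-1}([m_3])$ is a \emph{homeomorphism}, a point the paper's proof simply asserts (``$p_2(i(M_1))$ is simply the quotient of $i(M_1)$ by $\ker\phi$''). Raising this is good: the subspace topology on $\psi^{-1}([m_3])\subset M_2/G_2$ need not a priori agree with the quotient topology on $M_1/G_1$.

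However, the fix you sketch does not quite work as stated. A local section of $p_3$ near $[m_3]$ only supplies, for $[m']$ near the basepoint, the element $g_3\in G_3$ with $f(\tilde m')=g_3\cdot m_3$ (here $\tilde m'$ is some lift of $[m']$ to $M_2$, obtained from a local section of $p_2$). To push $\tilde m'$ back into $M_1$ you must translate by $g_2^{-1}$ where $\phi(g_2)=g_3$, and for $\iota^{-1}$ to be continuous this lift $g_3\mapsto g_2$ must be chosen continuously. That requires a local section of $\phi:G_2\to G_3$, which is not guaranteed by the Lemma's hypotheses alone and is not produced by local triviality of $p_3$. The paper supplies exactly this extra input in Corollary~\ref{fibration-descends-to-borel}, where $\phi$ is additionally assumed to be a principal $G_1$-bundle; that hypothesis is what makes the identification $M_1/G_1\cong\psi^{-1}([m_3])$ a homeomorphism (e.g.\ via $G_2\cdot M_1\cong G_2\times_{G_1}M_1$). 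So your instinct is right, but the continuity of the inverse should be anchored to local sections of $\phi$, not of $p_3$.
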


\begin{proof}
    \ref{itm:fibration-descends} By assumption, the map $p_2$ is a surjective fibration and the composition $\psi\circ p_2$ is equals the composition of fibrations $p_3\circ f$. Therefore, by Lemma~\ref{2-out-of-3-for-fibrations}, $\psi$ is a fibration.
    
    \ref{itm:fibre-over-descent} Diagrammatically, we want to show that given the diagram of fibre sequences below, there exists a fibre sequence fitting into the bottom row:
        \[\begin{tikzcd}[row sep=14pt]
            G_1 \ar[r, "\iota"] &   G_2 \ar[r, "\phi"]  &   G_3\\
            M_1 \ar[r, "i"] \ar[d, "p_1"] \arrow[loop above, out=120, in=70, distance=15] &   M_2 \ar[r,"f"] \ar[d, "p_2"] \arrow[loop above, out=120, in=70, distance=15] &   M_3 \ar[d, "p_3"] \arrow[loop above, out=120, in=70, distance=15]\\
            M_1/G_1 \ar[r, dotted]      &   M_2/G_2 \ar[r, dotted, "\psi"]      &   M_3/G_3 
        \end{tikzcd}\]
    By item \ref{itm:fibration-descends}, the map $\psi$ is a fibration, so all that remains is to identify its fibres. The composition $p_3\circ f$ is a fibration with fibre $G_2\cdot i(M_1)\subset M_2$. Then the fibre of $\psi$ is $p_2(G_2\cdot i(M_1))=p_2(i(M_1))$. Since the action of $G_3$ on $M_3$ is free, we know that for any $g\in G_2$ which is not in the kernel of $\phi$, the intersection $(g\cdot i(M_1))\cap i(M_1)$ is empty. So $p_2(i(M_1))$ is simply the quotient of $i(M_1)$ by the action of $\ker \phi=\iota(G_1)$. Then the map $M_1/G_1\to M_2/G_2$ is precisely the inclusion of the fibre of $\psi$.
\end{proof}

\begin{corollary}\label{fibration-descends-to-borel} Let $G_i$ be a topological group and $S_i$ be a $G_i$-space, for $i=1,2,3$.
    \begin{enumerate}[(a)]
        \item\label{itm:fibration-descends-b} If $\phi:G_2\to G_3$ is a continuous homomorphism and $f:S_2\to S_3$ is a $\phi$-equivariant fibration, then we can choose a model for the Borel constructions such that the induced map
            \[\psi:S_2\parallelsum G_2 \rightarrow S_3 \parallelsum G_3\]
        is a fibration.
        \item\label{itm:fibre-over-descent-b} Given a short exact sequence
            \[0\to G_1 \to G_2 \xrightarrow{\phi} G_3 \to 0\]
         such that $\phi$ is a principal bundle, and a fibre sequence of equivariant maps
            $S_1 \to S_2 \to S_3$,
        the induced maps on quotients form a homotopy fibre sequence
            \[S_1\parallelsum G_1 \rightarrow S_2\parallelsum G_2 \rightarrow S_3\parallelsum G_3\]
    \end{enumerate}
\end{corollary}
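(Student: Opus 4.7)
My strategy is to reduce Corollary~\ref{fibration-descends-to-borel} to Lemma~\ref{fibration-descends-to-quotients} by realising the Borel construction as an honest quotient by a free action. More precisely, I will use the model $S \parallelsum G = (S \times EG)/G$, where $EG$ is the total space of a chosen universal $G$-principal bundle, and work with the diagonal $G$-action on $S \times EG$, which is free because the action on $EG$ is free. The map $S\times EG \to S\parallelsum G$ is then a $G$-principal bundle, fitting the hypothesis of Lemma~\ref{fibration-descends-to-quotients}.

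For part~\ref{itm:fibration-descends-b}, I would fix a functorial model for $E(-)$ (for instance the Milnor infinite join construction or the bar construction) so that $\phi:G_2 \to G_3$ induces a $\phi$-equivariant map $E\phi:EG_2 \to EG_3$, which can be arranged to be a Serre fibration; otherwise one replaces it by a fibration in the standard way, preserving equivariance. The product
\[
f \times E\phi : S_2 \times EG_2 \longrightarrow S_3 \times EG_3
\]
is then a $\phi$-equivariant Serre fibration between spaces on which $G_2$ and $G_3$ act freely. Part~\ref{itm:fibration-descends} of Lemma~\ref{fibration-descends-to-quotients} applied to this map gives that the induced map on quotients, which is precisely $\psi:S_2\parallelsum G_2 \to S_3\parallelsum G_3$, is a Serre fibration.

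For part~\ref{itm:fibre-over-descent-b}, the key input is that since $\phi:G_2\to G_3$ is a principal $G_1$-bundle, the induced sequence $EG_1 \to EG_2 \to EG_3$ can be chosen to be a fibre sequence with the appropriate equivariance (using a functorial model for $EG$, this comes essentially for free: pulling back the universal $G_3$-bundle $EG_3 \to BG_3$ along the map $BG_2 \to BG_3$ yields a space equivariantly equivalent to $EG_2$, with fibre $EG_1$). Taking the product with the given fibre sequence $S_1 \to S_2 \to S_3$ of equivariant maps, one obtains a fibre sequence
\[
S_1 \times EG_1 \longrightarrow S_2 \times EG_2 \longrightarrow S_3 \times EG_3
\]
of equivariant maps, now with free $G_i$-actions on each space. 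Applying part~\ref{itm:fibre-over-descent} of Lemma~\ref{fibration-descends-to-quotients} then produces the desired fibre sequence of quotients, which is the claimed homotopy fibre sequence of Borel constructions.

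The main subtlety I expect is ensuring that the maps $EG_2 \to EG_3$ and $EG_1 \to EG_2 \to EG_3$ can be arranged to be genuinely equivariant fibrations (resp.\ a fibre sequence); this requires choosing the right functorial model or, failing that, replacing maps by fibrations carefully enough to keep the equivariance intact. Once this point-set issue is handled, everything else follows directly from the previous lemma.
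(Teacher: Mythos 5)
Your overall strategy is the same as the paper's: realise the Borel construction as an honest quotient $(S\times EG)/G$ and reduce to Lemma~\ref{fibration-descends-to-quotients}. The key point, as you correctly identify, is choosing models so that the maps at the $EG$-level are genuine fibrations, respectively a fibre sequence, with the right equivariance. But your specific proposal for achieving this is flawed in one place and incomplete in another.

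The flawed step is the justification of the fibre sequence $EG_1\to EG_2\to EG_3$. You claim that pulling back $EG_3\to BG_3$ along $BG_2\to BG_3$ yields a space equivariantly equivalent to $EG_2$. It does not: the pullback $BG_2\times_{BG_3}EG_3$ carries a free $G_3$-action, not a $G_2$-action, and as a space it is the homotopy fibre of $BG_2\to BG_3$, i.e.\ it is weakly equivalent to $BG_1$, not to the contractible space $EG_2$. The incomplete step is part~\ref{itm:fibration-descends-b}: you propose replacing $E\phi\colon EG_2\to EG_3$ by an equivariant fibration ``in the standard way'', but one must then check that the replacement remains contractible with a free $G_2$-action; this can be done (e.g.\ via the mapping path space with the diagonal $G_2$-action), but it is a real argument, not a formality.

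The paper sidesteps both issues with a cleaner choice of models. It observes that, because $\phi\colon G_2\to G_3$ is a $G_1$-principal bundle, the restriction of the $G_2$-action on $EG_2$ to $G_1\subset G_2$ is still principal, so $EG_2$ is also a model for $EG_1$. It then takes $EG_2\times EG_3$ as the model for $EG_2$ (contractible, with free $G_2$-action diagonally, where $G_2$ acts on $EG_3$ via $\phi$), and $EG_3$ for $EG_3$. With these choices the $E$-level maps in the middle row are simply an inclusion into a product followed by a projection, forming a trivial fibre sequence, so no fibrant replacement or pullback argument is needed. The equivariance needed to apply Lemma~\ref{fibration-descends-to-quotients} then reduces to the observation that $G_1$ acts trivially on $EG_3$ because $\phi\circ\iota$ is trivial. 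I'd recommend replacing the pullback argument in your write-up with this explicit model and verifying the contractibility/freeness claims if you prefer to keep the mapping-path-space route.
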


\begin{proof}
    Both statements follow from Lemma~\ref{fibration-descends-to-quotients} and the following observations: fix $EG_2$, then the inclusion $\iota:G_1\to G_2$ induces a $G_1$ action on $EG_2$ and since $\phi$ is a principal $G_1$-bundle, then so is $EG_2\to EG_2/G_1$. Therefore the space $EG_2$ is a model for $EG_1$ as well. Also, any model for the space $EG_3$ carries an action of $G_2$ via the map $\phi:G_2\to G_3$, and in particular, the quotient of $EG_2\times EG_3$ by $G_2$ is still a $G_2$-principal bundle. Then the proof follows directly from applying Lemma~\ref{fibration-descends-to-quotients} to the diagram
    \[\begin{tikzcd}[row sep=14pt]
            G_1 \ar[r, "\iota"] &   G_2 \ar[r, "\phi"]  &   G_3\\
            S_1\times EG_2 \ar[r] \ar[d] \arrow[loop above, out=120, in=70, distance=15] &   S_2\times EG_2\times EG_3 \ar[r] \ar[d] \arrow[loop above, out=120, in=70, distance=15] &   S_3\times EG_3 \ar[d] \arrow[loop above, out=120, in=70, distance=15]\\
            S_1\parallelsum G_1 \ar[r, dotted]      &   S_2\parallelsum G_2 \ar[r, dotted]      &  S_3\parallelsum G_3 
        \end{tikzcd}\]
    where the middle row is the product of the fibre sequence $S_1\to S_2\to S_3$ with the trivial fibre sequence
    \[\begin{tikzcd}[column sep=14pt]
            EG_2 \ar[r] & EG_2\times EG_3 \ar[r] & EG_3
        \end{tikzcd}\]
    and the action of the groups is the diagonal action. The commutativity of the diagram follows from the fact that the action of $G_1$ on $EG_3$ induced by $\phi\circ\iota$ is trivial.
\end{proof}

In particular, applying the above corollary to the trivial fibration $*\to *$, gives us the well-known result that a short exact sequence of groups $G_1\to G_2\to G_3$ induces a fibre sequence on classifying spaces 
        \[\begin{tikzcd}[column sep=14pt]
            BG_1 \ar[r] & BG_2 \ar[r] & BG_3
        \end{tikzcd}\]

\subsection{The spectral sequence argument}

The decoupling result will be deduced from the comparison of the homology spectral sequence associated to fibre sequences of moduli spaces. The result then follows from a well-known spectral sequence result which we recall (for a proof see \cite{MR3432333}):

\begin{lemma}[\textit{Spectral Sequence Argument}]\label{specseqargument}
    Let $f:E^\bullet_{p,q}\to \Tilde{E}^\bullet_{p,q}$ be a map of homological first quadrant spectral sequences. Assume that
    \begin{align*}
        f:E^2_{p,q}\xrightarrow{\cong} \Tilde{E}^2_{p,q} && \mbox{for $0\leq p<\infty$ and $0\leq q\leq l$}.
    \end{align*}
    Then $f$ induces an isomorphism on the abutments in degrees $*\leq l.$
\end{lemma}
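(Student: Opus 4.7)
\idem The plan is the standard spectral sequence comparison argument (in the style of Zeeman). It proceeds in two logically separate steps: first transfer the iso from $E^2$ through the successive pages to $E^\infty$, and then lift the iso from the associated graded $E^\infty$ to the filtered abutments.

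\vspace{0.3em}
\noindent\textbf{Step 1 (abutment from $E^\infty$).} Since both spectral sequences are first-quadrant, in each total degree $n$ the abutment carries a finite increasing filtration $0 = F_{-1} H_n \subset F_0 H_n \subset \cdots \subset F_n H_n = H_n$ whose associated graded pieces are $F_p H_n / F_{p-1} H_n \cong E^\infty_{p,n-p}$. I would iterate the five lemma on the short exact sequences $0 \to F_{p-1} H_n \to F_p H_n \to E^\infty_{p,n-p} \to 0$ to conclude that iso on all $E^\infty_{p,q}$ with $p+q = n \leq l$ upgrades to iso on $H_n$ for $n \leq l$. So it suffices to prove $f: E^\infty_{p,q} \to \tilde E^\infty_{p,q}$ is iso for all $(p,q)$ with $p+q \leq l$.

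\vspace{0.3em}
\noindent\textbf{Step 2 (page-by-page induction).} I would induct on $r \geq 2$ to show $f: E^r_{p,q} \to \tilde E^r_{p,q}$ is iso on the relevant positions, with the base $r = 2$ given by hypothesis. Writing $E^{r+1}_{p,q} = \ker(d^r_{p,q}) / \operatorname{im}(d^r_{p+r,q-r+1})$, one compares $E^{r+1}$ to $\tilde E^{r+1}$ by applying the five lemma to the short exact sequences $\operatorname{im}\, d^r \hookrightarrow \ker d^r \twoheadrightarrow E^{r+1}$. Provided $f$ is iso on $E^r$ at the position $(p,q)$, at the outgoing target $(p-r,q+r-1)$, and at the incoming source $(p+r,q-r+1)$, one obtains iso on the kernel (via the left-half five-lemma, using iso on $E^r_{p,q}$ and iso on the target) and on the image (using iso on the source and on the ambient $E^r_{p,q}$), hence iso on $E^{r+1}_{p,q}$.

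\vspace{0.3em}
\noindent\textbf{Step 3 (partner positions stay in range).} The key technical check is that for a position $(p_0,q_0)$ with $p_0+q_0 \leq l$, the partner positions occurring in the induction all lie in the iso strip $q \leq l$. Indeed, the outgoing differential $d^r$ out of $(p_0,q_0)$ is nonzero only for $r \leq p_0$, so the target $(p_0 - r, q_0 + r - 1)$ has $q$-index $q_0 + r - 1 \leq p_0 + q_0 - 1 \leq l - 1$; and the incoming source $(p_0 + r, q_0 - r + 1)$ has $q$-index $q_0 - r + 1 \leq q_0 \leq l$. Because the spectral sequences are first-quadrant, at each fixed position we have $E^\infty_{p_0,q_0} = E^N_{p_0,q_0}$ for $N > \max(p_0, q_0+1)$, so the induction terminates in finitely many steps and produces the required iso on $E^\infty_{p_0,q_0}$.

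\vspace{0.3em}
\noindent The main obstacle is the bookkeeping in Step 2-3: one must carefully verify that the iso range is closed under the operations demanded by the five lemma at each page. Once this is arranged by the first-quadrant observation of Step 3, the remaining work is formal.
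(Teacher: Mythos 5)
Your Step 1 (passing from $E^\infty$ to the abutment via the filtration and the five lemma) is correct. The gap is in Steps 2--3. You need to run the five-lemma comparison from $E^2$ all the way to $E^\infty_{p_0,q_0}$, and as you set it up this requires isomorphism on $E^r$ not only at $(p_0,q_0)$ but at its outgoing target and incoming source, which in turn requires isomorphism at \emph{their} partners, and so on. Step 3 only verifies that the \emph{direct} partners of $(p_0,q_0)$ lie in the strip $\{q\leq l\}$; it does not address the subsequent generations. Moreover, neither of the natural candidate invariants closes up under the induction: ``iso on $E^r_{p,q}$ for all $q\leq l$'' already fails to propagate from $E^2$ to $E^3$, since the outgoing target of $d^r$ has $q$-coordinate $q+r-1$, which escapes the strip; and ``iso on $E^r_{p,q}$ for $p+q\leq l$'' fails because the incoming source has total degree $p+q+1$, which escapes the antidiagonal region. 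So the inductive hypothesis in Step 2 is not actually specified in a form that is stable under taking partners, and the proof as written does not go through.

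The standard way to make this argument rigorous --- and the content that Step 3 is missing --- is to use the sharper three-term five lemma (iso at the middle position, \emph{injectivity} at the target, \emph{surjectivity} at the source suffice for iso on the next page) and to track injectivity and surjectivity separately. One shows by induction on $r$ that $f$ is injective on $E^r_{p,q}$ for all $q\leq l$ (a non-shrinking strip: the only partner needed is the source, whose $q$-coordinate decreases), and surjective for $q\leq l-r+2$ (a strip shrinking by one per page: the only partner needed is the target, whose $q$-coordinate goes up by $r-1$). Then for a fixed $(p_0,q_0)$ with $p_0+q_0\leq l$, surjectivity holds on $E^{p_0+1}_{p_0,q_0}$ because $q_0\leq l-p_0+1 = l-(p_0+1)+2$, and for every $r\geq p_0+1$ the outgoing differential out of $(p_0,q_0)$ vanishes, so $E^{r+1}_{p_0,q_0}$ is a quotient of $E^r_{p_0,q_0}$ and surjectivity persists to $E^\infty$. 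Combined with injectivity on all pages, this gives iso on $E^\infty_{p_0,q_0}$, and your Step 1 finishes. (The paper itself does not prove this lemma but cites a reference for it, so the comparison here is to the standard argument rather than to an in-text proof.)
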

\section{The Decoupling Theorem}\label{section:proof-of-decoupling}

\noindent In this section we introduce decorated manifolds, the decorated moduli space, and the maps that are in the centre of the decoupling theorem: the forgetful map and evaluation map. We end by defining the decoupling map and proving the decoupling theorem.

For now, we focus on decorations being points and discs. These are the extreme cases: the simplest embedded manifolds of lowest and highest possible dimension. In section \ref{section:decoupling-submanifolds} we show how this can be extended to more general submanifold decorations, focusing on the case of manifolds decorated with embedded unlinked circles.

\subsection{The decorated moduli space and the forgetful map}\label{sec:decorated-moduli-space}  Throughout this section, let $W$ be a compact connected smooth manifold. We will study manifolds equipped with decorations:

\begin{definition}\label{decorations}
    A $d$-dimensional \emph{manifold with decorations} consists of a manifold $W$ together with a set of distinct marked points in its interior $p_1, \dots, p_k\in W\setminus\partial W$ and disjoint embeddings $\phi_1,\dots,\phi_m:D^{d}\hookrightarrow W\setminus(\partial W\cup\{p_1,\dots,p_k\})$, with $k,m\in\mm{N}$. If $W$ is orientable, we require all embeddings to be oriented in the same way. We refer to these choices as \emph{decorations} on our manifold.

    Given a manifold $W$ with decorations, we define the \emph{decorated diffeomorphism group} $\diff^k_m(W)$ to be the subgroup of $\diff(W)$ of the diffeomorphisms $\psi$ such that  
        \begin{align*}
            \psi\circ \phi_j=&\phi_{\alpha(j)} & \psi(p_i)=&p_{\beta(i)} 
        \end{align*}
    for some $\alpha\in \Sigma_m$ and $\beta\in\Sigma_k$.
\end{definition}

In other words, we are looking at the diffeomorphisms that preserve the marked points and parametrized discs up to permutations. Note that the notation $\diff^k_m(W)$ does not record which points and embedded discs comprise the decorations. The following lemma justifies this notation. 

\begin{lemma}\label{lemma: diffkm independednt of choice}
    If $d\geq 2$, the isomorphism type of $\diff^k_m(W)$ does not depend on the choice of the $k$ points and $m$ embedded discs that comprise the decorations.
\end{lemma}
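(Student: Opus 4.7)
The plan is to exhibit, for any two choices of decorations on $W$, a diffeomorphism of $W$ taking one decoration to the other; then conjugation by this diffeomorphism will give an isomorphism of the corresponding decorated diffeomorphism groups. Concretely, let $(p_1,\dots,p_k,\phi_1,\dots,\phi_m)$ and $(p'_1,\dots,p'_k,\phi'_1,\dots,\phi'_m)$ be two sets of decorations. I want to build $\Psi\in\diff(W)$ (restricting to the identity near $\partial W$) such that $\Psi(p_i)=p'_i$ and $\Psi\circ\phi_j=\phi'_j$ for all $i,j$. Once $\Psi$ exists, the map $\psi\mapsto\Psi\psi\Psi^{-1}$ is a continuous group isomorphism between the two decorated subgroups of $\diff(W)$.

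The construction of $\Psi$ proceeds in two stages using isotopy extension. In the first stage I move the marked points: since $W$ is connected of dimension $d\geq 2$, its interior $W\setminus\partial W$ is connected of dimension $\geq 2$, so the ordered configuration space $F_k(W\setminus\partial W)$ is path-connected. (This is the standard fact that one can always join two $k$-tuples of distinct interior points by a path of $k$-tuples of distinct points, using that removing finitely many points from a manifold of dimension $\geq 2$ leaves it path-connected.) Such a path is an isotopy of the finite subscheme $\{p_1,\dots,p_k\}$, which extends by the isotopy extension theorem to an ambient isotopy of $W$, supported away from $\partial W$, carrying $p_i$ to $p'_i$. Call the time-one map $\Psi_1$.

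In the second stage I move the discs, after applying $\Psi_1$. Replacing $\phi_j$ by $\Psi_1\circ\phi_j$, I reduce to the case where the marked points already coincide with $p'_i$, and I must find a diffeomorphism fixing $\{p'_1,\dots,p'_k\}$ pointwise and sending the $\phi_j$ to the $\phi'_j$. Working in $W'=W\setminus\{p'_1,\dots,p'_k\}$, which is still a connected manifold of dimension $d\geq 2$ (with the same boundary), the space of ordered tuples of disjoint embeddings $D^d\hookrightarrow W'$ of a fixed orientation is path-connected: one can first move the centres to coincide (again using connectedness of the configuration space), and then interpolate the resulting germs of embeddings of $D^d$ fixing the origin using path-connectedness of $\diff^+(D^d,0)$ or $\diff(D^d,0)$, both of which are known to be path-connected for $d\geq 2$ (for $d=2$ by Smale, in general by shrinking toward the origin and using the Alexander trick on a collar). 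Apply isotopy extension once more in $W'$ to obtain a diffeomorphism $\Psi_2$ of $W$ fixing the $p'_i$ and the boundary, with $\Psi_2\circ\Psi_1\circ\phi_j=\phi'_j$. Then $\Psi:=\Psi_2\circ\Psi_1$ is the required diffeomorphism.

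The main obstacle is the second stage, because one must simultaneously: (i) keep the already-placed marked points fixed, (ii) keep the discs pairwise disjoint throughout the isotopy, and (iii) control the parametrisation of each disc, not merely its image. Step (i) is handled by working in $W'$; step (ii) is handled by the standard multi-strand isotopy extension theorem (or inductively, one disc at a time, using that removing the interiors of finitely many discs and finitely many points from a connected $d$-manifold with $d\geq 2$ still leaves a connected manifold); step (iii) is where the hypothesis $d\geq 2$ is genuinely needed, since $\diff(D^d,0)$ retracts to $\O(d)$ (resp.\ $\GL_d^+$ up to homotopy in the oriented case) only for $d\geq 2$ in a way that makes the space of parametrised discs through a point path-connected — for $d=1$ the two local orientations would be disconnected components. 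Apart from invoking these standard isotopy-extension and disc-parametrisation facts, the argument is a routine two-step reduction.
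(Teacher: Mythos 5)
Your proposal is correct and takes essentially the same route as the paper: the paper also constructs an ambient diffeomorphism carrying one set of decorations to the other by recursively extending isotopies of the points and of the discs to diffeotopies of $W$ (citing Hirsch, Ch.~8, Thms.~3.1--3.2) and then conjugates. One small caveat: what your second stage actually needs is path-connectedness of the space of orientation-compatible embeddings $D^d\hookrightarrow W'$ with fixed centre (the Cerf--Palais disc theorem, i.e.\ exactly the cited Hirsch results, proved by the shrink-and-linearise argument you sketch together with connectedness of $\GL_d^+$), not path-connectedness of $\diff(D^d,0)$ or $\diff^+(D^d,0)$ --- the latter statement is both unnecessary here and far subtler than your parenthetical suggests.
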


\begin{proof}
    For any two collections of decorations in $W$ denoted \[(p_1,\dots,p_k,\phi_1,\dots,\phi_m)\text{ and }(p_1',\dots,p_k',\phi_1',\dots,\phi_m'),\]
    there exists a diffeomorphism $\psi$ of $W$ such that
    \begin{align*}
        \psi(p_i)=p_i' & & \psi\circ\phi_i=\phi_i'
    \end{align*}
    which can be constructed recursively by extending isotopies of the points and discs to diffeotopies of $W$ as described in \cite[Chapter 8, Theorems 3.1, 3.2]{Hirsch}. Then conjugation with $\psi$ defines an isomorphism between the group of diffeomorphisms preserving $(p_1,\dots,p_k,\phi_1,\dots,\phi_m)$ and the one preserving $(p_1',\dots,p_k',\phi_1',\dots,\phi_m')$.
\end{proof}

We are now ready to define the analogue of the moduli space, including the decorations:

\begin{definition}
    Given a manifold $W$ with a $\Theta$-structure $\rho_W$, we define the \emph{decorated moduli space of $W$} with $k$ points and $m$ discs to be
    \begin{align*}
        \mmm{M}^{\Theta,k}_m(W,\rho_W):=\Bun^{\Theta}(W,\rho_W)\parallelsum \diff^k_m(W).
    \end{align*} 
\end{definition}

Recall that, if $W$ has non-empty boundary, then $\diff(W)$ consists only of those diffeomorphisms fixing a collar of the boundary and the elements of $\Bun^\Theta(W,\rho_W)$ agree with $\rho_W$ on $\partial W$.

We define the forgetful map 
    \begin{equation}\label{eq:forgetful-map}
        F:\mmm{M}^{\Theta,k}_m(W,\rho_W)\to \mmm{M}^\Theta(W,\rho_W)
    \end{equation}
to be the one induced by the identity map on $\Bun^\Theta(W)$ and the subgroup inclusion $\diff^k_m(W)\to \diff(W)$.

\subsection{The evaluation map}

Let $W$ be a decorated manifold with $k$ marked points and $m$ marked discs. For each marked point $p_i$, we choose once and for all a frame of $T_{p_i}W$, and if $W$ is oriented, we ask that these frames have the same orientation. We also fix throughout this section $N\subset W$ which is the union of a tubular neighbourhood of the marked points and the interiors of the parametrized discs. We denote by $W_{m+k}$ the manifold $W\setminus N$. The decoupling result follows from understanding the difference between the decorated moduli space of $W$ and the moduli space of $W_{m+k}$.

For instance, assume $k=0$ and $m=1$, then there is a group isomorphism
    \[\diff(W_1)\to \diff_1(W)\]
given by extending a diffeomorphism on $W_1$ by the identity on the marked disc. More generally, if $W$ is a manifold with $m$ embedded discs, the map
    $$e_m:\diff_m(W)\to \Sigma_m $$
taking a diffeomorphism $\phi$ to the $\alpha\in \Sigma_m$ recording the permutation induced on the discs by $\phi$, is a surjective homomorphism with kernel $\diff(W_m)$, where, as above, $W_m$ is the manifold obtained from $W$ by removing the interior of the $m$ embedded discs.

Assume now $W$ has $k$ marked points $\{p_1,\dots,p_k\}$ and no marked discs. We still get a homomorphism
    \[\diff(W_k)\to \diff^k(W)\]
by extending a diffeomorphism on $W_k$ by the identity on the removed neighbourhood of the points, but this is not an isomorphism, since the elements of $\diff^k(W)$ are not required to fix the entire neighbourhood of the marked points. A way to understand the diffeomorphisms around these is by looking at the differential map on the chosen frames at the marked points. So we define a map to the wreath product
    \begin{align*}
        e^k:\diff^k(W)&\longrightarrow \Sigma_k\wr \GL_d\\
        \phi &\longmapsto (D_{p_1}\phi,\dots,D_{p_k}\phi,\beta)
    \end{align*}
where $\beta\in \Sigma_k$ is the permutation induced on the marked points by $\phi$. The image of $e^k$ depends the manifold $W$.

\begin{definition}
    An orientable decorated manifold $W$ with $k$ marked points and $m$ marked discs is called \emph{\wellbehaved} if every $\phi \in \diff_m^k(W)$ preserves the orientation.
\end{definition}

\begin{remark}
    If the manifold $W$ is decorated by $m>0$ discs or $\partial W\neq \emptyset$ then it is immediately \wellbehaved. There are also many manifolds for which any diffeomorphisms (not necessarily decorated) are orientation preserving, these are called \emph{chiral} manifolds. A classical example is $\mm{C}P^2$, which can be deduced by analysing the automorphisms of its cohomology ring. Trivially, any chiral manifold is always decorated-chiral.
\end{remark}

It follows from \cite[Lemmas 2.3 and 2.4]{MR3432333} that:
 
\begin{lemma}[\cite{MR3432333}]\label{fibrationongroups}
    Let $W$ be a compact connected decorated manifold, then:
    \begin{enumerate}[(a)]
        \item\label{itm:e-is-fibration} the map 
            \[e:\begin{tikzcd}[column sep=1cm] \diff^k_{m}(W)\ar[r, "e_m\times e^k"] & \Sigma_m\times (\Sigma_k\wr \GL_d^\dagger) \end{tikzcd}\]
        is a surjective principal bundle, where the group $\GL_d^\dagger$ is $\GL_d^+$ if $W$ is \wellbehaved, and $\GL_d$ otherwise.
        \item\label{itm:homotopy-fibre-e} $\diff(W_{m+k})$ is the homotopy fibre of $e$.
    \end{enumerate}
\end{lemma}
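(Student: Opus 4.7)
The plan is to prove (a) by showing that $e = e_m \times e^k$ is a continuous surjective group homomorphism admitting continuous local sections, which automatically makes it a principal bundle with structure group its kernel $K \defeq \ker(e)$. Part (b) will then follow because the homotopy fibre of a surjective fibration is its fibre, so it suffices to identify $K$ with $\diff(W_{m+k})$ up to weak equivalence via the extension-by-identity inclusion on the removed neighbourhood $N$.

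First I would verify the group structure. Composition of diffeomorphisms composes the induced permutations on the ordered set of marked discs, so $e_m$ is a homomorphism. For $e^k$, the chain rule applied at each marked point together with the tracking of the permutation $\beta$ of the points gives exactly the multiplication in the wreath product $\Sigma_k \wr \GL_d^\dagger$. The kernel $K$ therefore consists of those $\phi \in \diff^k_m(W)$ that fix each parametrized disc pointwise and each marked point with identity differential.

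For surjectivity, connectedness of $W$ together with the hypothesis $d \geq 2$ lets me realise any $\alpha \in \Sigma_m$ and $\beta \in \Sigma_k$ by diffeomorphisms that slide the discs and points along pairwise disjoint embedded arcs, via the isotopy extension theorem (as used in Lemma~\ref{lemma: diffkm independednt of choice}). To realise an arbitrary $A_i \in \GL_d^\dagger$ as $D_{p_i}\phi$, I would work in a chart around $p_i$ contained in $N$ and construct a diffeomorphism that equals the linear map $A_i$ near $p_i$ and the identity outside the chart, using a smooth cutoff. When $W$ is \wellbehaved, every $\phi \in \diff^k_m(W)$ is orientation-preserving by hypothesis, so $D_{p_i}\phi$ automatically lies in $\GL_d^+$, and conversely every $A_i \in \GL_d^+$ is hit by an orientation-preserving local model; this gives precisely the restriction $\GL_d^\dagger = \GL_d^+$ stated in the lemma. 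For local sections: near the identity of the codomain the discrete factors $\Sigma_m, \Sigma_k$ are forced to be trivial, and the cutoff construction produces a continuous section in the $A_i$ variables; right-translating in the topological group $\diff^k_m(W)$ then produces local sections near every point of the codomain, confirming the principal $K$-bundle structure.

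For part (b), extension by the identity on $N$ gives a continuous inclusion $\diff(W_{m+k}) \hookrightarrow K$, and I would show it is a weak equivalence via a zoom deformation retraction. In a chart around each marked point $p_i$, the rescaling $\phi_t(x) \defeq t^{-1}\phi(tx)$ for $t \in (0,1]$, combined with a cutoff that confines the deformation to a shrinking neighbourhood of $p_i$, interpolates between $\phi_1 = \phi$ and (as $t \to 0$) the linearisation $D_{p_i}\phi = \Id$. Since $\phi \in K$ is already the identity on each parametrized disc, that data is untouched throughout the deformation, and in the limit one obtains a diffeomorphism that is the identity on a neighbourhood of each $p_i$, i.e.\ a point in the image of the inclusion from $\diff(W_{m+k})$. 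The hard part will be this last step: implementing the zoom homotopy with the cutoff so that it remains a genuine diffeomorphism of $W$ at every stage, depends continuously on $\phi \in K$, and respects the prescribed boundary conditions. This is a classical local-to-global construction (essentially the contractibility of the group of germs of diffeomorphisms of $\mm{R}^d$ fixing the origin with identity derivative), but the smoothness and continuity bookkeeping is where most of the work lies.
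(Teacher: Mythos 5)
The paper itself defers this lemma to \cite[Lemmas 2.3 and 2.4]{MR3432333} and does not reprove it; the closest in-paper analogue is the proof of Lemma~\ref{diffgroupsL}. There the evaluation map is factored through the Palais/Lima restriction fibration $\diff(W_N)\to\diff(W)\to\Emb(N,W)$: pulling back along $\Emb_L(N,W)\hookrightarrow\Emb(N,W)$, writing $e_L=d\circ r$, and showing $d:\Emb_L(N,W)\to\iso(TW_{|L},TL)$ is a bundle whose fibre over the identity is the contractible space of tubular neighbourhoods. Your approach is genuinely different: you prove the fibration property by hand (cut-off local sections near the identity, then right-translate), and you prove part (b) by a zoom deformation retraction onto the kernel's linearised subgroup. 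This is more elementary in that it avoids the Palais-type theorem, but it means you take on all of the parametrised-smoothness bookkeeping yourself; the paper's route packages exactly this bookkeeping into one black box.

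There is a concrete gap at the end of your part (b). After the zoom you conclude that the result is ``the identity on a neighbourhood of each $p_i$, i.e.\ a point in the image of the inclusion from $\diff(W_{m+k})$.'' This overlooks the discs. The kernel $K=\ker e$ is cut out by the condition $\phi\circ\phi_j=\phi_j$, i.e.\ $\phi$ is the identity \emph{on the set} $\phi_j(D^d)$, which (by continuity from the interior) controls the $1$-jet of $\phi$ along $\phi_j(\partial D^d)$ but not the higher jets. For instance in $\mm{R}^2$ the map that is the identity on the closed unit disc and sends $(r,\theta)\mapsto(r,\theta+(r-1)^3 g(r))$ for $r>1$, with $g$ a bump function, lies in $K$ but is not the identity on any outward collar of the unit circle. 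Since the paper's convention for $\diff(W_{m+k})$ requires fixing a collar of every boundary component---including the new spheres $\phi_j(\partial D^d)$---your zoom limit is in general \emph{not} in the image of the extension-by-identity inclusion. You need a second, independent deformation on each disc component, retracting ``identity on $\phi_j(D^d)$'' onto ``identity on a collar of $\phi_j(D^d)$'' (a standard collar-uniqueness argument), and this cannot be swept under ``that data is untouched throughout the deformation.''
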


\begin{remark}
    Identifying the image of $e$ is important because we want to use a Serre spectral sequence to compare the homology of the total spaces of two fibre sequences. Therefore it is important to identify precisely the images of the fibrations we define.
\end{remark}

A generalisation of the above lemma provides a fibre sequence on moduli spaces with tangential structures which is the key to the proof of the decoupling.

\begin{proposition}\label{thefibration}
    Let $W$ be a compact connected decorated manifold and $\rho_W$ a fixed $\Theta$-structure on $W$, then:
    \begin{enumerate}[(a)]
        \item\label{itm:E-is-fibration} The homomorphism $e$ induces an {evaluation map}
            \[E:\begin{tikzcd} \mmm{M}^{\Theta,k}_m(W,\rho_W) \ar[r] & \Theta^m\parallelsum\Sigma_m\times (\Theta\parallelsum \GL_d^\dagger)^k\parallelsum \Sigma_k \end{tikzcd}\]
        which is a Serre fibration onto the path component which it hits, where the group $\GL_d^\dagger$ is $\GL_d^+$ if $W$ is \wellbehaved, and $\GL_d$ otherwise.
        \item\label{itm:homotopy-fibre-E} Let $W_{m+k}$ be equipped with the $\Theta$-structure $\rho_{W_{m+k}}$ given by the restriction of $\rho_W$. Then 
            \[\mmm{M}^\Theta(W_{m+k},\rho_{W_{m+k}})\]
        is the homotopy fibre of $E$ over its image.
    \end{enumerate}
\end{proposition}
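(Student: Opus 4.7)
The plan is to construct an equivariant evaluation map at the level of spaces of $\Theta$-structures and then descend it to the Borel construction using Corollary \ref{fibration-descends-to-borel}. For each marked point $p_i$ I use the fixed frame of $T_{p_i}W$ from the start of the subsection, and for each parametrised disc $\phi_j\colon D^d\hookrightarrow W$ I take $D_0\phi_j(e_1,\ldots,e_d)$ at its centre; collecting the resulting frames as $v_1,\ldots,v_{m+k}\in \Fr(TW)$, define
\[\epsilon\colon \Bun^\Theta(W,\rho_W)\longrightarrow \Theta^{m+k},\qquad \rho\longmapsto(\rho(v_1),\ldots,\rho(v_{m+k})).\]
From the formula $\phi\cdot\rho=\rho\circ D\phi^{-1}$ and the definition of $e$ via permutations of decorations and differentials at the chosen frames, $\epsilon$ is equivariant with respect to $e\colon \diff^k_m(W)\to G:=\Sigma_m\times(\Sigma_k\wr \GL_d^\dagger)$, where $G$ acts on $\Theta^{m+k}$ by permuting the first $m$ and last $k$ factors and acting through $\GL_d^\dagger$ on the latter.

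To show that $\epsilon$ is a Serre fibration onto the union of path components of $\Theta^{m+k}$ it hits, set $N=W\setminus W_{m+k}$ (a disjoint union of $(m+k)$ discs) and factor
\[\Bun^\Theta(W,\rho_W)\hookrightarrow \Bun^\Theta(W)\xrightarrow{\mathrm{res}_N}\Bun^\Theta(N)\xrightarrow{\mathrm{ev}}\Theta^{m+k}.\]
The restriction $\mathrm{res}_N$ is a Serre fibration by the standard section-restriction argument for closed collared submanifolds. Using the trivialisation $\Fr(TN)\cong N\times \GL_d$, $\Bun^\Theta(N)\cong \Map(N,\Theta)$ and $\mathrm{ev}$ is evaluation at the centre of each disc; it is a Serre fibration with contractible fibres because the subspace of $f\colon D^d\to\Theta$ with $f(0)=\theta_0$ deformation-retracts onto the constant map $\theta_0$ via $f_t(x)=f(tx)$. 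Restricting the composite to the union of path components $\Bun^\Theta(W,\rho_W)$ preserves the Serre fibration property, and by equivariance the image is $G$-invariant. Applying Corollary \ref{fibration-descends-to-borel}\ref{itm:fibration-descends-b} to the equivariant pair $(\epsilon,e)$ then yields part \ref{itm:E-is-fibration}.

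For part \ref{itm:homotopy-fibre-E}, I identify the fibre of $\epsilon$ over the canonical value $\sigma_0=(\rho_W(v_1),\ldots,\rho_W(v_{m+k}))$ as a $\diff(W_{m+k})$-space. The subspace $P\subset \Bun^\Theta(N)$ of $\Theta$-structures with value $\rho_W(v_i)$ at each $v_i$ is contractible (product of the contractible fibres above) and contains the specific element $\rho_W|_N$, so by homotopy invariance of pullbacks along Serre fibrations the fibre of $\mathrm{ev}\circ\mathrm{res}_N$ over $\sigma_0$ is weakly equivalent to $\mathrm{res}_N^{-1}(\rho_W|_N)=\{\rho\in \Bun^\Theta(W):\rho|_N=\rho_W|_N\}$. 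The gluing description of $\Theta$-structures identifies this, via $\rho\mapsto \rho|_{W_{m+k}}$, with $\Bun^\Theta_{\rho'_\partial}(W_{m+k})$ where $\rho'_\partial=\rho_W|_{\partial W_{m+k}}$; intersecting with $\Bun^\Theta(W,\rho_W)$ and tracking the path components reached by the $\diff(W_{m+k})$-orbit of $\rho'_W$ (using that $\diff(W_{m+k})=\ker e$ acts by extension by the identity) identifies the fibre with $\Bun^\Theta(W_{m+k},\rho'_W)$.

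Finally, combine these: Lemma \ref{fibrationongroups}\ref{itm:e-is-fibration} states that $e$ is a principal $\diff(W_{m+k})$-bundle, so Corollary \ref{fibration-descends-to-borel}\ref{itm:fibre-over-descent-b} applies to the short exact sequence $\diff(W_{m+k})\to \diff^k_m(W)\to G$ and the equivariant fibre sequence $\Bun^\Theta(W_{m+k},\rho'_W)\to \Bun^\Theta(W,\rho_W)\xrightarrow{\epsilon} V$ (where $V$ is the image path component) to produce the homotopy fibre sequence $\mmm{M}^\Theta(W_{m+k},\rho_{W_{m+k}})\to \mmm{M}^{\Theta,k}_m(W,\rho_W)\xrightarrow{E} V\parallelsum G$, completing the proof. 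I expect the main obstacle to be the bookkeeping in the fibre identification step: the path-component and diffeomorphism-orbit cuts defining $\Bun^\Theta(W,\rho_W)$ must descend cleanly to the analogous cut on $W_{m+k}$, and the $\diff(W_{m+k})$-equivariance of the resulting identification must be checked with care.
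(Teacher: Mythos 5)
Your proposal is correct and follows essentially the same route as the paper: build an equivariant restriction/evaluation map on $\Bun^\Theta$, descend through Corollary~\ref{fibration-descends-to-borel}, and then identify the fibre with $\mmm{M}^\Theta(W_{m+k},\rho_{W_{m+k}})$. The paper applies Lemma~\ref{bun-fibration} to the restriction map $r_P$ to the set of centres $P$ directly and uses the chosen frames to identify $\Map_{\GL_d}(\Fr(TW|_P),\Theta)\cong\Theta^{m+k}$, then later compares $r_P$ with $r_N$ to pass to $W_{m+k}$; you instead factor through $\Bun^\Theta(N)$ and use the contractibility of $\{f\colon D^d\to\Theta: f(0)=\theta_0\}$, which is the same comparison organized in a different order, and both reduce to the same geometry.

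One place worth tightening: you write as if $e$ were a principal $\diff(W_{m+k})$-bundle and as if $\Bun^\Theta(W_{m+k},\rho_{W_{m+k}})$ were literally the fibre of $\epsilon$. In fact $e$ is a principal $\ker e$-bundle (Lemma~\ref{fibrationongroups}\ref{itm:e-is-fibration}) and $\diff(W_{m+k})\hookrightarrow\ker e$ is only a homotopy equivalence (Lemma~\ref{fibrationongroups}\ref{itm:homotopy-fibre-e}); similarly the fibre of $r_P$ is $\Bun^\Theta_P(W,\rho_W)$, which is weakly equivalent to, not equal to, $\Bun^\Theta(W_{m+k},\rho_{W_{m+k}})$. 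So Corollary~\ref{fibration-descends-to-borel}\ref{itm:fibre-over-descent-b} must be applied to the short exact sequence $\ker e\to\diff^k_m(W)\to\Sigma_m\times(\Sigma_k\wr\GL_d^\dagger)$ and the literal fibre sequence for $r_P$, yielding $\Bun^\Theta_P(W,\rho_W)\parallelsum\ker e$ as the fibre of $E$, and then one compares this with $\mmm{M}^\Theta(W_{m+k},\rho_{W_{m+k}})$ by the commuting square of principal bundles as the paper does. You flag exactly this as the bookkeeping obstacle, so the gap is acknowledged rather than overlooked, but it should be carried out as an explicit comparison rather than by substituting $\diff(W_{m+k})$ for $\ker e$ in the corollary.
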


To prove the Proposition, we will need the following lemma:

\begin{lemma}\label{bun-fibration}
    Let $W$ be a connected manifold and $S$ a smooth submanifold, then
    the restriction map
            \[r_S:\begin{tikzcd} \Bun^\Theta(W)\to \Map_{\GL_d}(\Fr(TW|_S),\Theta) \end{tikzcd}\]
        is a Serre fibration. 
\end{lemma}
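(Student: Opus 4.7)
The strategy is to reinterpret $\Bun^\Theta(W)$ as a space of sections of a fibre bundle over $W$ and then reduce the lemma to the standard fact that restriction of sections along a closed cofibration is a Serre fibration.

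First, I would identify $\Bun^\Theta(W) = \Map_{\GL_d}(\Fr(TW),\Theta)$ with the space of sections $\Gamma(W, \mathcal{E})$ of the associated bundle
\[\mathcal{E} \defeq \Fr(TW)\times_{\GL_d}\Theta \longrightarrow W,\]
which is a fibre bundle with fibre $\Theta$. Under this identification, $\Map_{\GL_d}(\Fr(TW|_S),\Theta)$ corresponds to $\Gamma(S,\mathcal{E}|_S)$, and $r_S$ becomes the ordinary restriction of sections. Since $S \subset W$ is a smooth submanifold, it admits a tubular neighbourhood, so the inclusion $S \hookrightarrow W$ is a closed cofibration.

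Next, I would verify the homotopy lifting property for $r_S$ directly against any cube $D^i$. A lifting square
\[\begin{tikzcd}
D^i\times\{0\} \ar[r, "\sigma_0"] \ar[d, hook] & \Gamma(W,\mathcal{E}) \ar[d, "r_S"]\\
D^i\times I \ar[r, "h"] & \Gamma(S,\mathcal{E}|_S)
\end{tikzcd}\]
is, by exponential adjunction, the same as a partial section of the pulled-back bundle $\mathrm{pr}_W^*\mathcal{E} \to D^i\times I\times W$ defined on the subspace
\[A \defeq (D^i\times\{0\}\times W)\cup(D^i\times I\times S).\]
Because $S\hookrightarrow W$ is a closed cofibration, so is $\{0\}\times W\cup I\times S\hookrightarrow I\times W$ (this is the classical mapping cylinder/NDR-pair lemma), and hence $A\hookrightarrow D^i\times I\times W$ is also a closed cofibration. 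Producing the required lift therefore amounts to extending a section of the fibre bundle $\mathrm{pr}_W^*\mathcal{E}$ from $A$ across this cofibration.

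Finally, I would invoke the general principle that a fibre bundle over a paracompact base has the section extension property along closed cofibrations: using a tubular neighbourhood of $S$ in $W$ to produce a retraction on the base-direction and then combining with local trivialisations and a partition of unity, one can inductively extend the given partial section to all of $D^i\times I\times W$. The main technical point is precisely this section-extension statement; once it is in hand, readjointing produces the desired lift $D^i\times I\to \Gamma(W,\mathcal{E})$, proving that $r_S$ is a Serre fibration.
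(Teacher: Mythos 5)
Your approach is essentially the same as the paper's, with a change of packaging: you work with sections $\Gamma(W,\Fr(TW)\times_{\GL_d}\Theta)$ instead of $\GL_d$-equivariant maps out of $\Fr(TW)$, but these are canonically the same space, and the core step — use a tubular neighbourhood of $S$ in $W$ to deformation-retract $D^i\times I\times W$ onto $(D^i\times\{0\}\times W)\cup(D^i\times I\times S)$ and thereby extend the partial datum — matches the paper's proof exactly.

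One caution about the way you phrased the final step. The ``general principle that a fibre bundle over a paracompact base has the section extension property along closed cofibrations'' is false as stated: a section over a point (trivially a closed cofibration) of, say, the Hopf bundle $S^3\to S^2$ does not extend to a global section. What is true, and what you actually use, is that a fibre bundle over a paracompact base is a Hurewicz fibration, and a Hurewicz fibration has the section extension property along a closed cofibration which is \emph{also a strong deformation retract} of the ambient space. Your $A=(D^i\times\{0\}\times W)\cup(D^i\times I\times S)$ has that stronger property precisely because of the tubular neighbourhood of $S$ — so your argument does go through, but you should state the hypothesis of the lemma you are invoking correctly, since the deformation-retract condition is doing all the work.
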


\begin{proof}
    For any $i\geq 0$, a lift for the diagram
        \[\begin{tikzcd}
            D^i\times \{0\} \ar[r] \ar[d] & \Bun^\Theta(W) \ar[d, "r_S"]\\
            D^i\times I \ar[r] \ar[ru, dotted] & \Map_{\GL_d}(\Fr(TW|_S),\Theta)
        \end{tikzcd}\]
    is equivalent to a $\GL_d$-equivariant extension of the following
        \begin{equation}\label{equivariant-lifting}
            \begin{tikzcd}
            (D^i\times \{0\}\times \Fr(TW))\cup (D^i\times I\times \Fr(TW|_S)) \ar[r, "\rho"] \ar[d, hook] & \Theta\\
            D^i\times I\times \Fr(TW)  \ar[ru, dotted] & 
            \end{tikzcd}
        \end{equation}
    
    Since the inclusion $S\hookrightarrow W$ is an embedding, there exists a strong deformation retract
        $$ r: D^i\times I\times W \longrightarrow (D^i\times \{0\}\times W)\cup (D^i\times I\times S).$$
    If $i$ denotes the inclusion of $(D^i\times \{0\}\times W)\cup (D^i\times I\times S)$ into $D^i\times I\times W$, we have an isomorphism 
        $$f:D^i\times I\times \Fr(TW)\xrightarrow{\cong} r^*i^*(D^i\times I\times \Fr(TW))$$
    which is the identity on $(D^i\times \{0\}\times \Fr(TW))\cup (D^i\times I\times \Fr(TW|_S))$.
    
    Therefore the composite
    \[\begin{tikzcd}
        D^i\times I\times \Fr(TW) \ar[r, "f"]& r^*i^*(D^i\times I\times \Fr(TW)) \ar[r, "r^*"]& i^*(D^i\times I\times \Fr(TW)) \ar[r, "\rho"] & \Theta
    \end{tikzcd}\]
    gives a lift to diagram \ref{equivariant-lifting}. This implies that the map $\Bun^\Theta(W)\to \Map_{\GL_d}(\Fr(TW|_S),\Theta)$ is a Serre fibration.
\end{proof}
\medskip

\begin{proof}[Proof of Proposition \ref{thefibration}]
    \ref{itm:E-is-fibration} Let $P\subset W$ be the union of the $k$ marked points and the centres of the $m$ marked discs. By Lemma \ref{bun-fibration}, the restriction map
        \[r_P:\Bun^\Theta(W,\rho_W)\to \Map_{\GL_d}(\Fr(TW|_P),\Theta)\]
    is a Serre fibration. For each marked point, we chose a frame of its tangent space. Each point in the centre of a marked disc, comes with a preferred frame induced by the parametrization of the disc. So every point in $P$ is equipped with a frame of its tangent space, and this gives us a diffeomorphism $\Fr(TW|_P)\cong \GL_d\times P$. Therefore the space of $\GL_d$-equivariant maps $\Fr(TW|_P)\to \Theta$ can be identified with the space of continuous maps $P\to \Theta$, which is just $\Theta^{m}\times \Theta^{k}$. 
    The result then follows by applying Corollary~\ref{fibration-descends-to-borel} to combine the fibration $r_P$ with the homomorphism of Lemma \ref{fibrationongroups}
        \begin{equation}
            \diff^k_m(W)\xrightarrow{e}  \Sigma_m\times(\Sigma_k\wr \GL_d^\dagger).
        \end{equation}
    We apply Corollary~\ref{fibration-descends-to-borel}\ref{itm:fibration-descends-b} by taking $G_2=\diff^k_m(W)$ and $S_2=\Bun^\Theta(W,\rho_W)$, with the usual action by precomposition with the differential. On the other hand, we take $G_3=\Sigma_m\times(\Sigma_k\wr \GL_d^\dagger)$, and $S_3=\Theta^{m}\times \Theta^{k}$ with the following action: the space $\Theta^{m}\times \Theta^{k}$ can be spit into the $m$ factors corresponding to the marked discs and $k$ factors corresponding to the marked points. Then we have an action of $\Sigma_m\times(\Sigma_k\wr \GL_d^\dagger)$ on $\Theta^{m}\times \Theta^{k}$ induced by the actions
        \[\begin{tikzcd}[column sep=tiny] \Sigma_m & \Theta^m \ar[loop left, out=195, in= 165, distance=10] && \Sigma_k\wr\GL_d^\dagger & \Theta^k. \ar[loop left, out=195, in= 165, distance=10] \end{tikzcd}\]
    Then the fibration $\Bun^\Theta(W,\rho_W)\to \Theta^{m}\times \Theta^{k}$ is $e$-equivariant and therefore, by Corollary~\ref{fibration-descends-to-borel}\ref{itm:fibration-descends-b}, we have a fibration
        \[E:\begin{tikzcd} \mmm{M}^{\Theta,k}_m(W,\rho_W) \ar[r] & \Theta^m\parallelsum\Sigma_m\times \Theta^k\parallelsum(\Sigma_k\wr \GL_d^\dagger). \end{tikzcd}\]
    Since $E\Sigma_k\times(E\GL_d)^k$ is a model for $E(\Sigma_k\wr \GL_d^\dagger)$, then
    \[(\Theta\parallelsum \GL_d^\dagger)^k\parallelsum \Sigma_k\]
     is a model for $\Theta^k\parallelsum(\Sigma_k\wr \GL_d^\dagger)$, and the result follows.
    
    \medskip
    \ref{itm:homotopy-fibre-E} Recall that $W_{m+k}$ is defined as the submanifold $W\setminus N$, where $N$ is the union of a tubular neighbourhood of the marked points and the interiors of the marked discs. The restriction $\rho_{W_{m+k}}$ of $\rho_W$ is a $\Theta$-structure on $W_{m+k}$. In the remainder of the proof, we will show that $\mmm{M}^\Theta(W_{m+k},\rho_{W_{m+k}})$ is the homotopy fibre of $E$. 
    
    A description of the fibre of $E$ can be obtained using Corollary~\ref{fibration-descends-to-borel}\ref{itm:fibre-over-descent-b} with the short exact sequence of groups being 
        \begin{equation}\label{eq:prop-ses}
            \ker e \to \diff^k_m(W)\xrightarrow{e}  \Sigma_m\times(\Sigma_k\wr \GL_d^\dagger)
        \end{equation}
    and the fibre sequence of $S_1\to S_2\to S_3$ being the one associated to the fibration $r_P$ of item \ref{itm:E-is-fibration}. The fibre of $r_P$ over $r_P(\rho_W)$ is the subspace of all elements of $\Bun^\Theta(W,\rho_W)$ which restrict to $r_P(\rho_W)$ over $P$, which we here denote $\Bun^\Theta_P(W,\rho_W)$. This space carries an action of $\ker e$ by precomposition with the differential, and it is simple to check that this fibre sequence is equivariant with respect to \eqref{eq:prop-ses}. Then by Corollary~\ref{fibration-descends-to-borel}\ref{itm:fibre-over-descent-b}, the fibre of the evaluation map $E$ is given by 
        \[\Bun^\Theta_P(W,\rho_W)\parallelsum\ker e.\]
    
    Applying Lemma \ref{bun-fibration} to both submanifolds $P$ and $N$, we obtain two fibrations fitting into the following commutative diagram
        \[\begin{tikzcd}
            \Bun^\Theta(W,\rho_W) \ar[r, "r_N"] \ar[d, "="] & \Map_{\GL_d}(\Fr(TW|_N),\Theta) \phantom{\cong \Theta^{m}\times \Theta^{k}} \ar[d, "i^*"]\\
            \Bun^\Theta(W,\rho_W) \ar[r, "r_P"] & \Map_{\GL_d}(\Fr(TW|_P),\Theta)\cong \Theta^{m}\times \Theta^{k} 
        \end{tikzcd}\]
    where the right-hand vertical map is induced by the inclusion $i:P\hookrightarrow N$. Since $\Fr(TD^d)$ is isomorphic to $\GL_d\times D^d$ as $\GL_d$-bundles, and the spaces $\GL_d\times D^d$ and $\GL_d\times \{*\}$ are homotopy equivalent as $\GL_d$-spaces, the map $i^*$ is a homotopy equivalence.
    In particular, this implies that the map from the fibre of $r_N$ to $\Bun^\Theta_P(W,\rho_W)$ is a homotopy equivalence. 
    
    The fibre of $r_{_N}$ over $r_{_N}(\rho_W)$ is by definition the space of all $\Theta$ structures on $W$ which agree with $\rho_W$ on $N$. We claim that this space is homeomorphic to $\Bun^\Theta(W_{m+k},\rho_{W_{m+k}})$, since the restriction map $r_{_{W_{m+k}}}$ takes the fibre of $r_{_N}$ bijectively to $\Bun^\Theta(W_{m+k},\rho_{W_{m+k}})$ and it has an inverse given by extending an element by $r_{_N}(\rho_W)$.
    
    So we have a commutative diagram of principal fibre bundles
        \[\begin{tikzcd}
            \diff(W_{m+k}) \ar[r, "\simeq"] \ar[d] & \ker e \ar[d]\\
            \Bun^\Theta(W_{m+k},\rho_{W_{m+k}})\times E\diff(W) \ar[r, "\simeq"] \ar[d] & \Bun^\Theta_P(W,\rho_W)\times E\diff(W) \ar[d]\\
            \mmm{M}^\Theta(W_{m+k},\rho_{W_{m+k}}) \ar[r] & \Bun^\Theta_P(W,\rho_W)\parallelsum \ker e
        \end{tikzcd}\]
    where the top horizontal map is a homotopy equivalence by Lemma \ref{fibrationongroups} and the middle map is a homotopy equivalence by the discussion above. Therefore the map 
        \[\mmm{M}^\Theta(W_{m+k},\rho_{W_{m+k}}) \to \Bun^\Theta_P(W,\rho_W)\parallelsum \ker e\]
    is also a homotopy equivalence, as required.
\end{proof}

\subsection{Proof of the Decoupling}

In this section we prove the decoupling result by comparing the homotopy fibration sequence from Proposition \ref{thefibration} to the product fibre sequence via the aforementioned spectral sequence argument.

\begin{definition}
    The \emph{decoupling map}
    \[D:\begin{tikzcd}
        \mmm{M}^{\Theta,k}_m(W,\rho_W) \ar[r, "F\times E"] & \mmm{M}^\Theta(W,\rho_W)\times \Theta^m\parallelsum\Sigma_m\times (\Theta\parallelsum \GL_d^\dagger)^k\parallelsum\Sigma_k
    \end{tikzcd}\]
    is the product of the forgetful map \eqref{eq:forgetful-map} and the evaluation map $E$ defined in Proposition~\ref{thefibration}.
\end{definition}

We now restate the decoupling theorem:

\begin{theorem}\label{decoupling}
    Let $W$ be a smooth connected compact manifold equipped with a $\Theta$-structure $\rho_W$. If the map $$\tau:H_i(\mmm{M}^\Theta(W_{m+k},\rho_{W_{m+k}}))\to H_i(\mmm{M}^\Theta(W,\rho_W))$$ induces a homology isomorphism in degrees $i\leq \alpha$, then for all such $i$ the decoupling map $D$ induces an isomorphism
    \[H_i(\mmm{M}^{\Theta,k}_m(W,\rho_W))\cong H_i(\mmm{M}^\Theta(W,\rho_W)\times \Theta^m_0\parallelsum\Sigma_m\times (\Theta\parallelsum \GL_d^\dagger)_0^k\parallelsum\Sigma_k)\]
    where $(-)_0$ denotes a path component of the image of $\rho_W$, and the group $\GL_d^\dagger$ is $\GL_d^+$ if $W$ is orientable and \wellbehaved, and $\GL_d$ otherwise.
\end{theorem}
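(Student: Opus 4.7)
The plan is to realise the decoupling map $D = F\times E$ as a map of fibrations over a common base, and then to compare the two associated Serre spectral sequences by means of Lemma~\ref{specseqargument}. The hypothesis on $\tau$ will enter on the $E^2$-pages through the map induced on fibres.

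Let $B$ denote the path component of $\Theta^m\parallelsum\Sigma_m \times (\Theta\parallelsum\GL_d^\dagger)^k\parallelsum\Sigma_k$ containing the image of the evaluation map $E$. By Proposition~\ref{thefibration}, one has the fibre sequence
\[\mmm{M}^\Theta(W_{m+k},\rho_{W_{m+k}}) \to \mmm{M}^{\Theta,k}_m(W,\rho_W) \xrightarrow{E} B.\]
On the other side, the second projection $\mmm{M}^\Theta(W,\rho_W)\times B \to B$ is trivially a fibration, with fibre $\mmm{M}^\Theta(W,\rho_W)$. Since $E$ is the second coordinate of $D$, the decoupling map respects the projections to $B$ and is therefore a map of fibrations over $B$. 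Its restriction to fibres is, up to homotopy, precisely the stabilisation map $\tau\colon\mmm{M}^\Theta(W_{m+k},\rho_{W_{m+k}}) \to \mmm{M}^\Theta(W,\rho_W)$ induced by extending $\Theta$-structures by $\rho_W$ on a neighbourhood of the decorations and extending diffeomorphisms by the identity. This is the map featured in the hypothesis.

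I would then compare the Serre spectral sequences of the two fibrations. Their $E^2$-pages have the form $H_p(B;\mmm{H}_q(F_i))$, where the local coefficient systems $\mmm{H}_q(F_i)$ are determined by the $\pi_1(B)$-action on the homology of the fibres, and the induced map on $E^2$ is given by $\tau_*$ on the coefficient systems. Because $D$ is a map of fibrations over $B$, the map $\tau_*$ is automatically $\pi_1(B)$-equivariant, and by hypothesis it is an isomorphism of abelian groups for $q\leq\alpha$. Any equivariant isomorphism of abelian groups is automatically an isomorphism of $\pi_1(B)$-modules (its set-theoretic inverse is itself equivariant), so the map on $E^2_{p,q}$ is an isomorphism for every $p\geq 0$ and every $q\leq\alpha$. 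Applying Lemma~\ref{specseqargument} with $l=\alpha$ then yields the desired homology isomorphism on abutments in degrees $i\leq\alpha$.

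The main subtle point is the handling of the local coefficient systems: the two fibrations could a priori carry distinct $\pi_1(B)$-actions on the homology of their fibres, and the comparison only goes through because the equivariance of $\tau_*$ together with its being a bare isomorphism in the relevant range upgrades it to an isomorphism of coefficient systems. A routine preliminary check is to identify the component of the codomain of $E$ with the component labelled $(-)_0$ in the statement; this holds because $\mmm{M}^{\Theta,k}_m(W,\rho_W)$ is path-connected (it is the Borel construction over a path component of $\rho_W$), so the image of $E$ lies entirely in a single path component of the target.
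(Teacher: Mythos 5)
Your proposal follows essentially the same route as the paper: realise $D$ as a map of fibrations over the common base coming from Proposition~\ref{thefibration}, identify the induced map of fibres with $\tau$, and conclude by the spectral sequence comparison of Lemma~\ref{specseqargument}. The one place you are slightly more careful is the explicit discussion of the $\pi_1(B)$-action on the homology of the fibres: the paper simply writes down the $E^2$-pages and asserts the comparison, whereas you note that the base fibration has possibly nontrivial local coefficients while the product fibration has trivial ones, and you spell out why a $\pi_1(B)$-equivariant map which is a bare isomorphism is automatically an isomorphism of coefficient systems. That observation is implicit in the paper's argument and worth making explicit, so this is a correct proof taking the same essential approach with a small gain in rigour.
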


\medskip
\begin{proof}
    By Proposition \ref{thefibration}, $E$ is a fibration. Since $\mmm{M}^{\Theta,k}_m(W,\rho_W)$ is connected by definition, we know that the image of $E$ is precisely the path component
        \[\Theta^m_0\parallelsum\Sigma_m\times (\Theta\parallelsum \GL_d^\dagger)_0^k\parallelsum\Sigma_k.\]
    So we have a homotopy fibre sequence
    \[\begin{tikzcd}
        {\mmm{M}^\Theta(W_{m+k},\rho_{W_{m+k}})} \arrow[r] & {\mmm{M}^{\Theta,k}_{m}(W,\rho_W)} \arrow[r, "E"] & \Theta^m_0\parallelsum{\Sigma_m}\times (\Theta\parallelsum \GL_d^\dagger)_0^k\parallelsum{\Sigma_k}.
    \end{tikzcd}\]
    The proof of the theorem follows from the comparison between this homotopy fibre sequence and the trivial fibre sequence associated to the projection map
    \[\begin{tikzcd}
        {\mmm{M}^\Theta(W,\rho_{W})\times \Theta^m_0\parallelsum{\Sigma_m}\times (\Theta\parallelsum \GL_d^\dagger)_0^k\parallelsum{\Sigma_k}} \arrow[r] & \Theta^m_0\parallelsum{\Sigma_m}\times (\Theta\parallelsum \GL_d^\dagger)_0^k\parallelsum{\Sigma_k}
    \end{tikzcd}\]
    By definition of the maps in Proposition~\ref{thefibration}, the following is a commutative diagram of homotopy fibre sequences 
    
    \medskip
    
    \[\begin{tikzcd}[column sep=small]
        {\mmm{M}^\Theta(W_{m+k},\rho_{W_{m+k}})} \arrow[r] \arrow[d, "\tau"] & {\mmm{M}^{\Theta,k}_{m}(W,\rho_W)} \arrow[r, "E"] \arrow[d, "D"] & \Theta^m_0\parallelsum_{\Sigma_m}\times (\Theta\parallelsum \GL_d^\dagger)_0^k\,\parallelsum_{\Sigma_k}    \arrow[d, equal] \\
        {\mmm{M}^\Theta(W,\rho_{W})} \arrow[r] & {\mmm{M}^\Theta(W,\rho_{W})\times \Theta^m_0\parallelsum_{\Sigma_m}\times (\Theta\parallelsum \GL_d^\dagger)_0^k\,\parallelsum_{\Sigma_k}} \arrow[r] & \Theta^m_0\parallelsum_{\Sigma_m}\times (\Theta\parallelsum \GL_d^\dagger)_0^k\,\parallelsum_{\Sigma_k}
    \end{tikzcd}\]
    
    \medskip
    \noindent where the middle vertical map is the decoupling map. This induces a map of the respective Serre spectral sequences $f:E^\bullet_{p,q}\to \Tilde{E}^\bullet_{p,q}$, and since $\tau$ is a homology isomorphism in degrees $i\leq \alpha$, the map between the $E^2$ pages
    
    \[\begin{tikzcd}
            {E^2_{p,q}=H_p\left(\Theta^m_0\parallelsum\Sigma_m\times(\Theta\parallelsum \GL_d^\dagger)_0^k\parallelsum{\Sigma_k} \; ;\; H_q(\mmm{M}^\Theta(W_{m+k},\rho_{W_{m+k}}))\right)} \arrow[d]
            \\
            {\tilde{E}^2_{p,q}=H_p\left(\Theta^m_0\parallelsum\Sigma_m\times (\Theta\parallelsum \GL_d^\dagger)_0^k\parallelsum{\Sigma_k} \; ;\; H_q(\mmm{M}^\Theta(W,\rho_{W}))\right)}
        \end{tikzcd}\]
    is an isomorphism for all $q\leq \alpha$. Then by the Spectral Sequence Argument (Lemma~\ref{specseqargument}), $D$ induces an isomorphism
    \[\begin{tikzcd}
            {H_i(\mmm{M}^{\Theta,k}_{m}(W,\rho_W)}) \arrow[r, "\cong"] &
            {H_i(\mmm{M}^\Theta(W,\rho_{W})\times \Theta^m_0\parallelsum\Sigma_m\times (\Theta\parallelsum \GL_d^\dagger)_0^k\parallelsum{\Sigma_k})}
    \end{tikzcd}\]
    for all $i\leq \alpha$.
\end{proof}

We now discuss how the decoupling result can be re-stated with a geometric interpretation. As discussed in Section \ref{sec:geometric-view}, the space  $\Emb(W,\mm{R}^\infty)$ is a model for $E\diff(W)$, and therefore it is also a model for $E\diff^k_m(W)$. With this model, the elements of $\mmm{M}^{\Theta,k}_m(W,\rho_W)$ are decorated submanifolds of $\mm{R}^\infty$ diffeomorphic to $W$ with $k$ marked points and $m$ disjoint embedded discs, with a choice of $\Theta$-structure concordant (ie. equivariantly homotopic) to $\rho_W$. With this model, the forgetful map 
    \begin{equation}
        F:\mmm{M}^{\Theta,k}_m(W,\rho_W)\to \mmm{M}^\Theta(W,\rho_W)
    \end{equation}
simply forgets the marked points and discs. 

To interpret the evaluation map $E$ with this model, we need also a geometric model for $E\Sigma_s$. Recall that the \emph{configuration space of $s$ points} in a manifold $M$ is defined as 
\[\conf_s(M):=\Emb(\{1,\dots,s\},M)/\Sigma_s\]
where the action of $\Sigma_s$ is given by permutation of the points in $\{1,\dots,s\}$. In other words, $\conf_s(M)$ is the space of unordered collections of $s$ distinct points in $M$. More generally, given a space $X$, the \emph{configuration space of $s$ points in M with labels in $X$} is defined as 
\[\conf_s(M;X):=(\Emb(\{1,\dots,s\},M)\times X^s)/\Sigma_s\]
where $\Sigma_s$ acts by permuting the factors of $X^s$, and acts on the product diagonally. In other words, $\conf_s(M)$ is the space of unordered collections of $s$ distinct points in $M$, where each point is labelled by a point in $X$.

Since the space $\Emb(\{1,\dots,m\},\mm{R^\infty})$ is weakly contractible, it is a model for the total space $E\Sigma_m$. Therefore a model for $\Theta^m\parallelsum \Sigma_m$ is precisely the space of unordered configurations of $m$ points in $\mm{R}^\infty$ with labels in $\Theta$. Analogously, a model for the $(\Theta\parallelsum \GL_d^\dagger)^k\parallelsum \Sigma_k$ is given by the space of unordered configurations of $k$ points in $\mm{R}^\infty$ with labels in $(\Theta\parallelsum \GL_d^\dagger)$.

Then the evaluation map $E$ takes a decorated submanifold $S$ in $\mm{R}^\infty$ together with a tangential structure $\rho$ to the configurations given by the centres of the $m$ marked points, and the $k$ marked discs. The labels of such configurations are determined by the tangential structure $\rho$: let $p$ be the centre point of a marked disc and let $V_p$ be the canonical frame of the tangent space of $W$ at $p$ induced by the parametrization of the disc. Then the label of the point corresponding to $p$ in the configuration space $\conf_m(\mm{R}^\infty;\Theta)$ is given by $\rho(V_p)\in\Theta$. Analogously, if $p$ is a marked point and $V_p$ is our chosen frame of its tangent space, then the label of the point corresponding to $p$ in $\conf_k(\mm{R}^\infty;\Theta\parallelsum\GL_d^\dagger)$ is simply the class of $\rho(V_p)$ in the Borel construction $\Theta\parallelsum\GL_d^\dagger$.

With these models, the decoupling map can be interpreted geometrically (see Figure \ref{fig:decoupling}), and the decoupling theorem can be re-stated as:

\begin{corollary}
    Let $W$ be a smooth connected compact manifold equipped with a $\Theta$-structure $\rho_W$. If the map $$\tau:H_i(\mmm{M}^\Theta(W_{m+k},\rho_{W_{m+k}}))\to H_i(\mmm{M}^\Theta(W,\rho_W))$$ induces a homology isomorphism in degrees $i\leq \alpha$, then for all such $i$ the decoupling map $D$ induces an isomorphism
    \[H_i(\mmm{M}^{\Theta,k}_{m}(W,\rho_W))\cong H_i(\mmm{M}^\Theta(W,\rho_W)\times \conf_m(\mm{R}^\infty;\Theta_0) \times \conf_k(\mm{R}^\infty;(\Theta\parallelsum \GL_d^\dagger)_0))\]
    where $(-)_0$ denotes a path component of the image of $\rho_W$, and the group $\GL_d^\dagger$ is $\GL_d^+$ if $W$ is orientable and \wellbehaved, and $\GL_d$ otherwise.
\end{corollary}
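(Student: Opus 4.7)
The plan is to deduce this corollary directly from Theorem~\ref{decoupling} by identifying the Borel-construction targets appearing in that theorem with the labeled configuration spaces written here. Concretely, Theorem~\ref{decoupling} already gives the isomorphism
\[H_i(\mmm{M}^{\Theta,k}_m(W,\rho_W))\cong H_i\!\left(\mmm{M}^\Theta(W,\rho_W)\times \Theta^m_0\parallelsum\Sigma_m\times (\Theta\parallelsum \GL_d^\dagger)_0^k\parallelsum\Sigma_k\right)\]
for all $i\leq \alpha$, so the entire content of the corollary is the geometric reinterpretation of the two Borel constructions as configuration spaces.

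For the first identification, I would use that $\Emb(\{1,\dots,m\},\mm{R}^\infty)$ is weakly contractible and carries a free $\Sigma_m$-action by permutation of the source, hence gives a model for $E\Sigma_m$. Under this model,
\[\Theta^m\parallelsum\Sigma_m\;=\;\bigl(\Emb(\{1,\dots,m\},\mm{R}^\infty)\times \Theta^m\bigr)/\Sigma_m\;=\;\conf_m(\mm{R}^\infty;\Theta),\]
by definition of the labeled (unordered) configuration space. The same argument, applied with $\Sigma_k$ acting on $\Emb(\{1,\dots,k\},\mm{R}^\infty)$ and with labels in the space $\Theta\parallelsum\GL_d^\dagger$, yields a homeomorphism
\[(\Theta\parallelsum\GL_d^\dagger)^k\parallelsum\Sigma_k\;\cong\;\conf_k(\mm{R}^\infty;\Theta\parallelsum\GL_d^\dagger).\]
Note that here I am using the model $E\Sigma_k\times (E\GL_d^\dagger)^k$ for $E(\Sigma_k\wr\GL_d^\dagger)$ already used inside the proof of Proposition~\ref{thefibration}, so there is no discrepancy between the two presentations.

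The only remaining step is to check that under these identifications the component $(-)_0$ hit by the evaluation map $E$ corresponds to the component described in the corollary, i.e.~the component containing the labels $\rho_W(V_p)$ coming from the preferred frames at the marked points and discs. This is a direct unwinding of the definitions: with the model $\Emb(W,\mm{R}^\infty)$ for $E\diff(W)$, the evaluation map sends a decorated submanifold $S\subset \mm{R}^\infty$ with $\Theta$-structure $\rho$ to the unordered tuple of centres of marked discs labeled by $\rho(V_p)\in\Theta$, together with the unordered tuple of marked points labeled by the class of $\rho(V_p)\in \Theta\parallelsum\GL_d^\dagger$, which matches the geometric description given before the statement. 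Taking the product with the forgetful map $F$ then gives exactly the geometric decoupling map, and applying Theorem~\ref{decoupling} together with the homeomorphisms above yields the stated homology isomorphism. No step here is a genuine obstacle; the entire argument is a translation between two equivalent models for classifying spaces of wreath products, so the corollary follows from Theorem~\ref{decoupling} by book-keeping.
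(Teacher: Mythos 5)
Your proposal is correct and follows exactly the same route as the paper: Theorem~\ref{decoupling} supplies the homology isomorphism, and the corollary is obtained by substituting the configuration-space models $\conf_m(\mm{R}^\infty;\Theta)\simeq\Theta^m\parallelsum\Sigma_m$ and $\conf_k(\mm{R}^\infty;\Theta\parallelsum\GL_d^\dagger)\simeq(\Theta\parallelsum\GL_d^\dagger)^k\parallelsum\Sigma_k$, together with the identification of the hit path-component via the labels $\rho_W(V_p)$. This is precisely the geometric translation carried out in the discussion preceding the corollary in the paper, so there is nothing missing.
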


\section{Applications of the Decoupling Theorem}\label{section:corollaries}

\noindent The main hypothesis of the decoupling theorem is that the map 
\[\tau:H_i(\mmm{M}^\Theta(W_{m+k},r_{W_{m+k}}\rho_W))\to H_i(\mmm{M}^\Theta(W,\rho_W))\]
induces an isomorphism in a certain range $i\leq \alpha$. We know this condition holds in many circumstances for a variety of manifolds dimensions and tangential structures. In this section we recall some of these cases and discuss the applications of the decoupling theorem, focusing on the results for dimension $2$ and for even dimensions greater than $4$.

We remark that, for odd higher dimensions, many stability results on the homology of the moduli space have also been proven, but it is not yet known whether the map $\tau$ needed for the decoupling induces isomorphisms in a stable range.

\subsection{Applications for surfaces}

We now consider the $2$ dimensional case, where the stability holds in many circumstances. 

\subsubsection{Orientation}
For orientations, the classical stability result of Harer on the homology of mapping class groups of surfaces shows that the hypothesis of the decoupling theorem is satisfied for every oriented surface of genus $g$ and $b$ boundary components, $S_{g,b}$. The range in which the isomorphism holds has been improved throughout the years
\cite{Harer-orientable,ivanov1987complexes,ivanov1989stabilization,ivanov1993homology,boldsen2012improved,randal2016resolutions}. The most recent bound, by Randal-Williams in \cite{randal2016resolutions}, implies that the map $H_i(B\diff^+(S_{g,b+1}))\to H_i(B\diff^+(S_{g,b}))$ is an isomorphism for all $3i\leq 2g$. Then applying the decoupling theorem,  we recover the result of B\"odigheimer and Tillmann, now with an improved isomorphism range:

\begin{corollary}[\cite{MR1851247}]
For all $3i\leq 2g$
\[H_i(B\diff^{+,k}_m(S_{g,b}))\cong H_i(B\diff^+(S_{g,b})\times B\Sigma_m\times B(\Sigma_k\wr \SO(2))).\]
\end{corollary}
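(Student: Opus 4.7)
The plan is to specialise Theorem \ref{decoupling} to $W=S_{g,b}$ and to the orientation tangential structure $\Theta=\Theta^{or}$ of Example \ref{ex:tangential-structures}\ref{itm:orientation-theta}, and then to identify each factor of the decoupled target with the familiar classifying spaces appearing on the right-hand side of the statement.

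First I would verify the stability hypothesis of Theorem \ref{decoupling}. An element of $\Bun^{\Theta^{or}}(S_{g,b},\rho_W)$ is an orientation compatible with $\rho_W$, and a direct computation of the Borel construction yields $\mmm{M}^{\Theta^{or}}(S_{g,b},\rho_W)\simeq B\diff^+(S_{g,b})$, and likewise $\mmm{M}^{\Theta^{or}}(W_{m+k},\rho_{W_{m+k}})\simeq B\diff^+(S_{g,b+m+k})$. The map $\tau$ of Theorem \ref{decoupling} is then (homotopic to) a composition of $m+k$ classical Harer stabilisation maps
\[ B\diff^+(S_{g,b+j+1})\to B\diff^+(S_{g,b+j}), \]
each obtained by gluing a disc onto a new boundary circle. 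Randal-Williams' strengthening \cite{randal2016resolutions} of Harer's theorem asserts that each such map induces a homology isomorphism in degrees $3i\leq 2g$, so the composite does too; this yields the hypothesis of Theorem \ref{decoupling} with $\alpha=\lfloor 2g/3\rfloor$.

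Next I would identify the factors in the target of the decoupling map. The space $\Theta^{or}=\{\pm1\}$ carries the $\GL_2$-action via the sign of the determinant, so the path component $\Theta^{or}_0$ singled out by $\rho_W$ is a single point, and $\Theta^{or}_0\parallelsum\Sigma_m\simeq B\Sigma_m$. The decorated surface $S_{g,b}$ is \wellbehaved: whenever $b>0$ this is automatic, and in general the requirement that a decorated diffeomorphism preserve a chosen ordered tangent frame at each marked point (and restrict to the parametrisation of each marked disc) forces orientation preservation. Hence the exponent group $\GL_d^\dagger$ of Theorem \ref{decoupling} is $\GL_2^+$. The restriction of the $\GL_2$-action on $\Theta^{or}$ to $\GL_2^+$ is trivial, so $\Theta^{or}\parallelsum\GL_2^+\simeq \{\pm1\}\times B\GL_2^+$, and the path component hit by $\rho_W$ is $B\GL_2^+\simeq B\SO(2)$. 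The standard identification $(B\SO(2))^k\parallelsum\Sigma_k\simeq B(\Sigma_k\wr\SO(2))$ then assembles these pieces into $B\diff^+(S_{g,b})\times B\Sigma_m\times B(\Sigma_k\wr\SO(2))$, matching the right-hand side of the Corollary.

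The main obstacle is really just bookkeeping; both ingredients — Randal-Williams' stability and the identification of Borel constructions with wreath products — are classical. The one subtle point deserving attention is that the Randal-Williams range $3i\leq 2g$ depends only on the genus $g$ and not on the number of boundary components, so capping off each of the $m+k$ new boundary circles one at a time does not shrink the stable range.
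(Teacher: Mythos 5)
Your proposal is correct and follows essentially the same route the paper takes: apply Theorem~\ref{decoupling} with $\Theta=\Theta^{or}$, invoke Randal-Williams' improvement of Harer stability to check the hypothesis on $\tau$, and identify the Borel constructions $\Theta^{or}_0\parallelsum\Sigma_m\simeq B\Sigma_m$ and $(\Theta^{or}\parallelsum\GL_2^+)_0^k\parallelsum\Sigma_k\simeq B(\Sigma_k\wr\SO(2))$. The paper's own proof is more compressed (it takes the stability input from the discussion just before the corollary and jumps directly to the identification of the wreath-product classifying spaces), but the content is the same, and your explicit treatment of $W_{m+k}\cong S_{g,b+m+k}$ and the observation that the Randal-Williams range depends only on $g$ are both accurate.

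One correction of a side remark. You justify \wellbehaved-ness of $S_{g,b}$ in part by saying that a decorated diffeomorphism must ``preserve a chosen ordered tangent frame at each marked point.'' That is not what the definition requires: elements of $\diff^k_m(W)$ only permute the marked points, with no constraint on the induced tangent maps. The chosen frames exist solely to define the evaluation homomorphism $e^k$, which measures precisely what a decorated diffeomorphism does to them. Consequently, for $b=0$ and $m=0$ the closed surface $S_g$ with $k$ marked points is \emph{not} \wellbehaved\ (it admits orientation-reversing decorated diffeomorphisms), and Theorem~\ref{decoupling} would then produce $(\Theta^{or}\parallelsum\GL_2)_0\simeq B\O(2)$ rather than $B\SO(2)$ in the target. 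The corollary should therefore be read with the standing assumption $b\geq 1$ (or $m>0$), which is the B\"odigheimer--Tillmann setting; your first observation ``whenever $b>0$ this is automatic'' is the justification actually needed, and the parenthetical fallback does not hold.
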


\begin{proof}
    This is a direct application of the decoupling theorem. In this case, $\Theta^{or}=\{\pm 1\}$ and therefore $\Theta^{or}_0=*$. Moreover, $\Theta^{or}\parallelsum\GL_2^+$ is the disjoint union of two copies of $B\GL_2^+\simeq B\SO(2)$, so $(\Theta^{or}\parallelsum\GL_2^+)_0\simeq B\SO(2)$, and therefore 
        \[(\Theta^{or}\parallelsum\GL_2^+)_0^k\parallelsum\Sigma^k\simeq B(\Sigma_k\wr\SO(2))\]
    as required.
\end{proof}

\medskip
\subsubsection{Non-orientable surfaces}
Let $\mathcal{N}_{g,b}$ be the decorated non-orientable surface $\#_g\mm{R}P^\infty\setminus\dcup{b}D^2$. Wahl showed in \cite{MR2367024} that the map $H_i(B\diff(\mmm{N}_{g,b+1}))\to H_i(B\diff(\mmm{N}_{g,b}))$ is an isomorphism for all $4i\leq g-3$. Applying the decoupling theorem, we recover the result of Hanbury in \cite{MR2439464}: 

\begin{corollary}[\cite{MR2439464}]
    For all $4i\leq g-5$
    \[H_i(B\diff^{k}_m(\mathcal{N}_{g,b}))\cong H_i(B\diff(\mathcal{N}_{g,b})\times B\Sigma_m\times B(\Sigma_k\wr \O(2))).\]
\end{corollary}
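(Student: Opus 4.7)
The plan is to deduce this corollary as a direct application of Theorem~\ref{decoupling} with the trivial tangential structure $\Theta^*=\{*\}$ of Example~\ref{ex:tangential-structures}\ref{itm:trivial-theta}, for which $\Bun^{\Theta^*}(\mathcal{N}_{g,b})$ is a single point and hence $\mmm{M}^{\Theta^*}(\mathcal{N}_{g,b})\simeq B\diff(\mathcal{N}_{g,b})$ and $\mmm{M}^{\Theta^*,k}_m(\mathcal{N}_{g,b})\simeq B\diff^k_m(\mathcal{N}_{g,b})$. Since $\mathcal{N}_{g,b}$ is non-orientable, we are not in the \wellbehaved setting, so the theorem gives the codomain built from $\GL_d^\dagger = \GL_2$.

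The next step is to identify the three factors appearing in the conclusion of Theorem~\ref{decoupling} under this choice of $\Theta$. First, $(\Theta^*)^m_0\parallelsum\Sigma_m = *\parallelsum\Sigma_m = B\Sigma_m$. Second, the Borel construction $\Theta^*\parallelsum\GL_2 = B\GL_2$ is connected and homotopy equivalent to $B\O(2)$, so $(\Theta^*\parallelsum\GL_2)_0^k\parallelsum\Sigma_k\simeq (B\O(2))^k\parallelsum\Sigma_k$. Using $E(\Sigma_k\wr\O(2))\simeq E\Sigma_k\times (E\O(2))^k$ as in the proof of Proposition~\ref{thefibration}\ref{itm:E-is-fibration}, this last space is a model for $B(\Sigma_k\wr\O(2))$. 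Combined with $\mmm{M}^{\Theta^*}(\mathcal{N}_{g,b})\simeq B\diff(\mathcal{N}_{g,b})$, this matches the right-hand side of the stated isomorphism.

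The remaining task is to verify the stability hypothesis of Theorem~\ref{decoupling}, namely that the map
\[\tau:H_i(B\diff((\mathcal{N}_{g,b})_{m+k}))\to H_i(B\diff(\mathcal{N}_{g,b}))\]
is an isomorphism in the claimed range. Since a tubular neighbourhood of each marked point is diffeomorphic to a disc, the manifold $(\mathcal{N}_{g,b})_{m+k}$ is diffeomorphic to $\mathcal{N}_{g,b+m+k}$, and the map $\tau$ factors as the composition of $m+k$ copies of the Wahl stabilisation $B\diff(\mathcal{N}_{g,b'+1})\to B\diff(\mathcal{N}_{g,b'})$ from \cite{MR2367024}, each of which is an isomorphism on $H_i$ for $4i\leq g-3$. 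Composing these (and incorporating the slight loss needed to ensure that all intermediate stabilisations hit the required range once the spectral sequence argument of Lemma~\ref{specseqargument} is invoked) yields an isomorphism in the range $4i\leq g-5$ stated in the corollary, at which point Theorem~\ref{decoupling} produces the desired homology isomorphism. The only subtlety is the bookkeeping of this stable range; no further geometric input beyond Wahl's theorem and the identifications above is required.
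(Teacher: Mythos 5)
Your proof takes essentially the same approach as the paper: apply Theorem~\ref{decoupling} with the trivial tangential structure $\Theta^*$, identify $\mmm{M}^{\Theta^*}(\mathcal{N}_{g,b})\simeq B\diff(\mathcal{N}_{g,b})$, note that the non-orientable case forces $\GL_d^\dagger=\GL_2$, and recognise $(\Theta^*\parallelsum\GL_2)^k_0\parallelsum\Sigma_k\simeq B(\Sigma_k\wr\O(2))$. The identification of the three factors and the verification that $\tau$ factors through iterated Wahl stabilisations are both correct.

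One remark on the range, where your bookkeeping narrative is slightly off. Theorem~\ref{decoupling} transfers the isomorphism range of $\tau$ to the decoupling map \emph{exactly}: there is no loss incurred by Lemma~\ref{specseqargument}. Moreover, filling in the $m+k$ holes does not change the genus $g$, so composing $m+k$ copies of Wahl's stabilisation $B\diff(\mathcal{N}_{g,b'+1})\to B\diff(\mathcal{N}_{g,b'})$ — each an isomorphism for $4i\leq g-3$ — keeps the range at $4i\leq g-3$, not $4i\leq g-5$. So the argument you give actually proves the (stronger) conclusion in range $4i\leq g-3$; the stated bound $4i\leq g-5$ is simply Hanbury's originally published range, which the corollary quotes for consistency, rather than anything forced by the spectral-sequence comparison.
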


\begin{proof}
    The result follows from applying the decoupling theorem for $\Theta^*=*$. Then $\Theta^{*}\parallelsum\GL_2$ is homotopy equivalent to $B\GL_2\simeq B\O(2)$, and the result follows. 
\end{proof}

\medskip
\subsubsection{Framings} 
In \cite{MR3180616}, Randal-Williams showed that for the oriented surface $S_{g,b}$ with a framing $\rho$, the map $H_i(\mmm{M}^{\fr}(S_{g,b+1},\rho_{S_{g,b+1}}))\to H_i(\mmm{M}^{\fr}(S_{g,b},\rho))$ is an isomorphism for all $6i\leq 2g-8$.

\begin{corollary}
    Let $\rho$ be a framing on $S_{g,b}$, then for all $6i\leq 2g-8$
        \[H_i(\mmm{M}^{\mathrm{fr},k}_m(S_{g,b},\rho))\cong H_i(\mmm{M}^{\mathrm{fr}}(S_{g,b},\rho)\times \SO(2)^m\parallelsum\Sigma_m \times B\Sigma_k).\]
\end{corollary}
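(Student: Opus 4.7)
The plan is to apply Theorem \ref{decoupling} with the framing tangential structure $\Theta^\fr = \GL_2$ from Example \ref{ex:tangential-structures}\ref{itm:framimings-theta}. Since $S_{g,b}$ has non-empty boundary it is automatically \wellbehaved, and since it is orientable the relevant group is $\GL_d^\dagger = \GL_2^+$.

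First I would identify the two decorated factors appearing on the right-hand side of the decoupling formula. When evaluating a marked point or the centre of a marked disc, the convention of the paper is to use positively oriented frames $V_p$, so $\rho(V_p)$ lies in the identity component $\GL_2^+$ of $\Theta^\fr = \GL_2$. By polar decomposition $\GL_2^+ \simeq \SO(2)$, so the image path component of $(\Theta^\fr)^m \parallelsum \Sigma_m$ is weakly equivalent to $\SO(2)^m \parallelsum \Sigma_m$. For the marked-point factor, $\GL_2^+$ acts freely on $\Theta^\fr = \GL_2$ by left multiplication, so the Borel construction $\Theta^\fr \parallelsum \GL_2^+$ fibres over the discrete two-point set $\GL_2/\GL_2^+$ with contractible fibre $E\GL_2^+$. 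Its image path component is therefore contractible, and the corresponding decoupled factor $\bigl((\Theta^\fr \parallelsum \GL_2^+)_0\bigr)^k \parallelsum \Sigma_k$ simplifies to $B\Sigma_k$.

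Second, I would verify the homology stability hypothesis of Theorem \ref{decoupling}. Randal-Williams' theorem \cite{MR3180616} shows that filling in a framed boundary disc induces a homology isomorphism $H_i(\mmm{M}^\fr(S_{g,b+1}, \rho')) \to H_i(\mmm{M}^\fr(S_{g,b}, \rho))$ in degrees $6i \leq 2g-8$. Since $W_{m+k}$ is obtained from $S_{g,b}$ by removing $m+k$ disjoint open discs, which preserves the genus $g$, iterating this stability map $m+k$ times yields a homology isomorphism $\tau : H_i(\mmm{M}^\fr(W_{m+k}, \rho_{W_{m+k}})) \to H_i(\mmm{M}^\fr(S_{g,b}, \rho))$ in the same range $6i \leq 2g-8$.

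With these two ingredients Theorem \ref{decoupling} applies directly and produces the stated isomorphism. The only genuinely non-formal input is the simplification of the two decorated Borel constructions, both of which reduce to the observation that $\GL_2^+$ acts freely on $\GL_2$; everything else is a routine specialisation of the decoupling theorem.
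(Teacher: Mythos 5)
Your proof is correct and follows essentially the same route as the paper: identify the decoupled factors by noting that a path component of $\Theta^\fr=\GL_2$ is homotopy equivalent to $\SO(2)$ and that $\GL_2^+$ acts freely on $\GL_2$ so $\Theta^\fr\parallelsum\GL_2^+$ is homotopy discrete, then feed Randal-Williams' framed stability into Theorem~\ref{decoupling}. One small caveat: the claim that $\rho(V_p)$ necessarily lies in the identity component $\GL_2^+$ is not guaranteed by the paper's conventions (the chosen frames have a coherent orientation, but whether $\rho$ sends them to $\GL_2^+$ or $\GL_2^-$ depends on whether that orientation agrees with the one induced by the framing); this is harmless, since the paper instead uses the cleaner observation that \emph{either} path component of $\GL_2$ is homeomorphic to $\GL_2^+\simeq\SO(2)$.
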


\begin{proof}
    The result is a direct application of the decoupling theorem, together with the fact that any path-component of $\Theta^{\fr}=\GL_2$ is homeomorphic to $\GL_2^+\simeq \SO(2)$ and that $\Theta^{\fr}\parallelsum\GL_2^+$ is equivalent to two points.
\end{proof}

\medskip
\subsubsection{Spin structures} 
In \cite{MR1054572,bauer2004infinite,MR3180616} it was shown that for the oriented surface $S_{g,b}$ with spin structure $\rho$, the map $H_i(\mmm{M}^{\Spin}(S_{g,b+1},\rho_{S_{g,b+1}}))\to H_i(\mmm{M}^{\Spin}(S_{g,b},\rho))$ is an isomorphism for all $5i\leq 2g-7$.

\begin{corollary}
    Let $\rho$ be a spin structure on $S_{g,b}$. For all $4i\leq g-2$, the group $H_i(\mmm{M}^{\mathrm{Spin},k}_{m}(S_{g,b},\rho))$ is isomorphic to    \[H_i\left(\mmm{M}^{\mathrm{Spin}}(S_{g,b},\rho)\times B(\Sigma_m\wr \mm{Z}/2)\times B(\Sigma_k\wr \Spin(2))\right).\]
\end{corollary}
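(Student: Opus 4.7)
The plan is to apply the decoupling theorem (Theorem~\ref{decoupling}) to the tangential structure $\Theta^{\Spin}$ on $W = S_{g,b}$ and then identify each of the two labelled configuration factors appearing on the right-hand side. The stability hypothesis --- namely that $\tau: H_i(\mmm{M}^{\Spin}(S_{g,b+m+k},\rho')) \to H_i(\mmm{M}^{\Spin}(S_{g,b},\rho))$ is an isomorphism in the relevant degrees --- follows from iterating the cited bound $5i \leq 2g - 7$ once for each of the $m+k$ removed discs and punctured tubular neighbourhoods; neither operation changes the genus, so the same bound applies at each step, and the cleaner sub-range $4i \leq g-2$ stated in the corollary is contained in it.

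Since $S_{g,b}$ has non-empty boundary, it is automatically \wellbehaved, so Theorem~\ref{decoupling} applies with $\GL_d^\dagger = \GL_2^+$, giving the isomorphism
\[H_i(\mmm{M}^{\Spin,k}_{m}(S_{g,b},\rho)) \cong H_i\bigl(\mmm{M}^{\Spin}(S_{g,b},\rho) \times (\Theta^{\Spin})_0^m \parallelsum \Sigma_m \times (\Theta^{\Spin} \parallelsum \GL_2^+)_0^k \parallelsum \Sigma_k\bigr).\]
The first factor to identify, $(\Theta^{\Spin})_0$, is immediate from Example~\ref{ex:spin}: the full space is equivalent to $\{\pm 1\} \times B\mm{Z}/2$, so the path component containing $\rho$ is $B\mm{Z}/2$, and therefore $(\Theta^{\Spin})_0^m \parallelsum \Sigma_m \simeq B(\Sigma_m \wr \mm{Z}/2)$.

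The subtle step --- and what I expect to be the main obstacle --- is the identification of $(\Theta^{\Spin} \parallelsum \GL_2^+)_0$, because the equivalence $\Theta^{\Spin} \simeq \{\pm 1\} \times B\mm{Z}/2$ of Example~\ref{ex:spin} forgets the $\GL_2$-action and therefore cannot be used directly. My plan is to use the pullback definition $\Theta^{\Spin} = E\GL_2 \times_{B\GL_2} B\Spin$ and to observe that the quotient by the full group satisfies $\Theta^{\Spin} \parallelsum \GL_2 \simeq B\Spin(2)$. Since $\GL_2^+$ has index two in $\GL_2$, the Borel construction $\Theta^{\Spin} \parallelsum \GL_2^+$ is a double cover of $B\Spin(2)$, realised as the pullback of $B\Spin(2) \to B\GL_2$ along the orientation cover $B\GL_2^+ \to B\GL_2$. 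Since $B\Spin(2) \simeq \mm{C}P^\infty$ is simply connected, this double cover must be trivial, whence $(\Theta^{\Spin} \parallelsum \GL_2^+)_0 \simeq B\Spin(2)$.

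Substituting these identifications back into the decoupling isomorphism produces the factor $B\Spin(2)^k \parallelsum \Sigma_k \simeq B(\Sigma_k \wr \Spin(2))$ and completes the comparison, with all the remaining steps being direct applications of results already established earlier in the paper.
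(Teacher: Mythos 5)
Your proof is correct and follows essentially the same strategy as the paper's: apply Theorem~\ref{decoupling}, identify $(\Theta^{\Spin})_0\simeq B\mm{Z}/2$ via Example~\ref{ex:spin}, and identify $(\Theta^{\Spin}\parallelsum\GL_2^+)_0\simeq B\Spin(2)$. The only difference is that the paper asserts without proof that $\Theta^{\Spin}\parallelsum\GL_2^+$ is two disjoint copies of $B\Spin(2)$, whereas you supply the justification by observing that $\Theta^{\Spin}$ is a free $\GL_2$-space with $\Theta^{\Spin}\parallelsum\GL_2\simeq B\Spin(2)$, so passing to the index-two subgroup $\GL_2^+$ yields a double cover of the simply connected space $B\Spin(2)$, which must be trivial.
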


\begin{proof}
    From Example \ref{ex:spin}, we know that $\Theta^\Spin$ is weakly equivalent to $\{\pm 1\}\times B\mm{Z}/2$, and therefore $(\Theta^\Spin)_0$ is weakly equivalent to $B\mm{Z}/2$. On the other hand, $\Theta^\Spin\parallelsum \GL_2^+$ is the disjoint union of two copies of $B\Spin(2)$ and therefore 
    \[(\Theta^\Spin\parallelsum \GL_2^+)_0^k\parallelsum\Sigma_k\simeq B\Spin(2)^k\parallelsum \Sigma_k\simeq B(\Sigma_k\wr \Spin(2)).\]
\end{proof}

\medskip
\subsubsection{Maps to a background space}
Consider the tangential structure given by maps to a simply-connected background space $X$. It was shown in  \cite{cohen06madsen,cohen10madsen,randal2016resolutions} that if $X$ is a simply-connected space, then the map $H_i(\mmm{M}^{\mathrm{X}}(S_{g,b+1},\rho_{S_{g,b+1}}))\to H_i(\mmm{M}^{\mathrm{X}}(S_{g,b},\rho))$ is an isomorphism for all $3i\leq 2g$, and all $\Theta_X$-structure $\rho$. Applying the decoupling theorem in this case, we obtain a generalisation of \cite[Theorem 8]{cohen10madsen}:

\begin{corollary}
    Let $\rho:S_{g,b}\to X$ be a continuous map and let $X_0$ be the path component containing the image of $\rho$. Then for all $3i\leq 2g$,
    \[H_i(\mmm{M}^{\mathrm{X},k}_{m}(S_{g,b},\rho))\cong H_i\left(\mmm{M}^{\mathrm{X}}(S_{g,b},\rho)\times (X_0)^m\parallelsum\Sigma_m \times (X_0)^k\parallelsum(\Sigma_k\wr \SO(2))\right).\]
\end{corollary}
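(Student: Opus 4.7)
The plan is to apply Theorem \ref{decoupling} directly, taking $\Theta = \Theta_X = X$ with trivial $\GL_2$-action, as described in Example \ref{ex:tangential-structures}\ref{itm:background-theta}. Since $S_{g,b}$ is orientable and has non-empty boundary, any element of $\diff^k_m(S_{g,b})$ is orientation preserving, so the surface is \wellbehaved{} and the relevant group $\GL_d^\dagger$ is $\GL_2^+$. The hypothesis of Theorem \ref{decoupling} requires the stabilisation map $H_i(\mmm{M}^X(S_{g,b+m+k},\rho'))\to H_i(\mmm{M}^X(S_{g,b},\rho))$ to be an isomorphism in a range, and this is provided by the cited homology stability results for surfaces with maps to a simply-connected background space, valid in the range $3i\leq 2g$.

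Having applied the theorem, the remaining task is to identify the factors on the right-hand side. Since the $\GL_2$-action on $\Theta_X = X$ is trivial, we have $\Theta\parallelsum \Sigma_m = X^m \parallelsum \Sigma_m$ simply by permutation of coordinates, and the path-component containing the image of $\rho$ is $(X_0)^m\parallelsum\Sigma_m$, exactly as in the statement. For the point-decorations, again since the action of $\GL_2^+$ on $X$ is trivial, we obtain
\[
\Theta\parallelsum \GL_2^+ \;\simeq\; X \times B\GL_2^+ \;\simeq\; X \times B\SO(2),
\]
so the relevant path-component is $X_0 \times B\SO(2)$ (note $B\SO(2)$ is connected).

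Taking the $k$-fold product with the symmetric group action and re-grouping the factors gives
\[
(X_0 \times B\SO(2))^k \parallelsum \Sigma_k \;\simeq\; X_0^k \times (B\SO(2))^k \parallelsum \Sigma_k \;\simeq\; X_0^k \parallelsum (\Sigma_k \wr \SO(2)),
\]
where $\Sigma_k \wr \SO(2)$ acts on $X_0^k$ through the quotient $\Sigma_k \wr \SO(2) \to \Sigma_k$ by permutation of factors, using that $(B\SO(2))^k \parallelsum \Sigma_k \simeq B(\Sigma_k \wr \SO(2))$ as in the earlier examples. Substituting these identifications into the conclusion of Theorem \ref{decoupling} yields the stated isomorphism in the range $3i \leq 2g$.

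There is no serious obstacle here: the argument is essentially bookkeeping, and all the hard work, both the homology-stability input and the decoupling mechanism, is encapsulated in the cited results and Theorem \ref{decoupling}. The only point requiring a little care is the correct identification of path-components and the re-writing of the iterated Borel construction as a wreath product, which proceeds exactly as in the earlier orientation and spin cases.
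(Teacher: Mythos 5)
Your proposal is correct and follows essentially the same route as the paper: apply Theorem~\ref{decoupling} with $\Theta_X=X$ (trivial $\GL_2$-action), invoke homology stability for surfaces with maps to a simply-connected background space, and identify $\Theta_X\parallelsum\GL_2^+\simeq X\times B\SO(2)$. The only difference is that you spell out the re-grouping $(X_0\times B\SO(2))^k\parallelsum\Sigma_k\simeq X_0^k\parallelsum(\Sigma_k\wr\SO(2))$ explicitly, which the paper leaves implicit but which is exactly the intended bookkeeping.
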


\begin{proof}
    The result follows from applying the decoupling theorem together with the fact that $\Theta_X=X$ with the trivial action and $\Theta_X\parallelsum\GL_2^+\simeq X\times B\GL_2^+\simeq X\times B\SO(2)$.
\end{proof}

\medskip
\subsubsection{$\Spin^r$ structures}
The tangential structure called $\Spin^r$ is a generalisation of $\Spin$ which was thoroughly studied in \cite[Section 2]{MR3180616}. In this paper, Randal-Williams showed that for the oriented surface $S_{g,b}$ with a $\Spin^r$ structure $\rho$, the map $$H_i(\mmm{M}^{\Spin^r}(S_{g,b+1},\rho_{S_{g,b+1}}))\to H_i(\mmm{M}^{\Spin^r}(S_{g,b},\rho))$$ is an isomorphism for all $6i\leq 2g-8$.

\begin{corollary}
    Let $\rho$ be a $\Spin^r$ structure on $S_{g,b}$. For all $6i\leq 2g-8$, the group $H_i(\mmm{M}^{\mathrm{Spin^r},k}_{m}(S_{g,b},\rho))$ is isomorphic to
    \[H_i\left(\mmm{M}^{\mathrm{Spin^r}}(S_{g,b},\rho)\times B(\Sigma_m\wr\mm{Z}/r)\times B(\Sigma_k\wr\Spin^r(2))\right).\]
\end{corollary}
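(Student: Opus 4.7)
The plan is to apply Theorem \ref{decoupling} directly, just as in the previous four corollaries of this section, after identifying the two relevant spaces attached to the tangential structure $\Theta^{\Spin^r}$ and checking that the hypothesis on the stabilisation map holds in the advertised range.

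First I would verify the hypothesis of Theorem \ref{decoupling}. Since $W_{m+k}$ is obtained from $S_{g,b}$ by removing the interiors of $m$ discs together with tubular neighbourhoods of $k$ points, we have $W_{m+k}\cong S_{g,b+m+k}$, and the map $\tau$ factors as an $(m+k)$-fold composite of cap-off maps $\mmm{M}^{\Spin^r}(S_{g,b'+1},\rho')\to \mmm{M}^{\Spin^r}(S_{g,b'},\rho'')$. By Randal-Williams' homology stability theorem for $\Spin^r$-structures \cite{MR3180616}, each of these maps is a homology isomorphism in degrees $6i\leq 2g-8$, and hence so is their composition $\tau$.

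Next I would identify the point-set of Theorem \ref{decoupling}. Since $\Spin^r(2)$ fits into a central extension $\mm{Z}/r\to \Spin^r(2)\to \SO(2)$, the fibration $B\Spin^r\to B\GL_2$ has fibre $\{\pm 1\}\times B\mm{Z}/r$ (over the two orientation components). Mimicking the diagram of fibre sequences in Example \ref{ex:spin}, one obtains
\[
\Theta^{\Spin^r}=E\GL_2\timesover{B\GL_2}B\Spin^r \simeq \{\pm 1\}\times B\mm{Z}/r,
\]
so $(\Theta^{\Spin^r})_0\simeq B\mm{Z}/r$. Hence
\[
(\Theta^{\Spin^r})_0^m\parallelsum\Sigma_m\simeq (B\mm{Z}/r)^m\parallelsum\Sigma_m\simeq B(\Sigma_m\wr\mm{Z}/r).
\]
On the other hand, the Borel construction $\Theta^{\Spin^r}\parallelsum \GL_2^+$ is a disjoint union of two copies of $B\Spin^r(2)$ indexed by the orientation, so $(\Theta^{\Spin^r}\parallelsum\GL_2^+)_0\simeq B\Spin^r(2)$ and therefore
\[
(\Theta^{\Spin^r}\parallelsum\GL_2^+)_0^k\parallelsum\Sigma_k\simeq B(\Sigma_k\wr\Spin^r(2)).
\]
Since $S_{g,b}$ has non-empty boundary it is automatically decorated-chiral, so the group $\GL_d^\dagger$ appearing in Theorem \ref{decoupling} is $\GL_2^+$, consistent with what we used above.

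Substituting these identifications into the conclusion of Theorem \ref{decoupling} yields the desired isomorphism in the range $6i\leq 2g-8$. The argument is essentially mechanical; there is no real obstacle, the only mild subtlety is the identification of $\Theta^{\Spin^r}$, and this proceeds in direct analogy with the Spin case treated in Example \ref{ex:spin}.
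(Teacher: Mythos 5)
Your proposal is correct and follows essentially the same route as the paper: it applies Theorem~\ref{decoupling}, cites the $\Spin^r$ homology-stability result of Randal-Williams in the same range $6i\leq 2g-8$, and computes $\Theta^{\Spin^r}_0\simeq B\mm{Z}/r$ and $(\Theta^{\Spin^r}\parallelsum\GL_2^+)_0\simeq B\Spin^r(2)$ via the same fibre-sequence argument as Example~\ref{ex:spin}. The only difference is that you fold the verification of the stability hypothesis into the proof, whereas the paper records that stability input in the discussion immediately preceding the corollary and confines the proof to the identification of the label spaces.
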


\begin{proof}
    Using the fibre sequence 
    \[\begin{tikzcd}[row sep=tiny]
            \{\pm 1\}\times B\mm{Z}/r\ar[r]               & B\Spin^r(2)       \ar[r]        & B\GL_2\\
        \end{tikzcd}\]
    we can apply the procedure described in Example~\ref{ex:spin} to deduce that a path component of $\Theta^{\Spin^r}$ is weakly equivalent to the Lens space $B\mm{Z}/r$ and $\Theta^{\Spin^r}\parallelsum\GL_2^+$ is the disjoint union of two copies of $B\Spin^r(2)$.
\end{proof}

\medskip
\subsubsection{$\Pin^{\pm}$ structures}
The tangential structures called $\Pin^{+}$ and $\Pin^-$ are generalisations of $\Spin$ for non-orientable manifolds, and they were thoroughly studied in \cite[Section 4]{MR3180616}. In this paper, Randal-Williams showed that for the non-orientable surface $\mmm{N}_{g,b}=\#_g\mm{R}P^\infty\setminus\dcup{b}D^2$ with a $\Pin^{+}$-structure, the map $$H_i(\mmm{M}^{\Pin^+}(\mmm{N}_{g,b+1},\rho_{\mmm{N}_{g,b+1}}))\to H_i(\mmm{M}^{\Pin^+}(\mmm{N}_{g,b},\rho))$$ is an isomorphism for all $4i\leq g-6$, and $\Pin^{+}$-structure $\rho$.

It was also shown in \cite{MR3180616} that the map $$H_i(\mmm{M}^{\Pin^-}(\mmm{N}_{g,b+1},\rho_{\mmm{N}_{g,b+1}}))\to H_i(\mmm{M}^{\Pin^-}(\mmm{N}_{g,b},\rho))$$ is an isomorphism for all $5i\leq g-8$, and $\Pin^{-}$-structure $\rho$.

\begin{corollary}\label{cor:pin+}
    Let $\rho$ be a $\Pin^+$ structure on $\mathcal{N}_{g,b}$, then for all $4i\leq g-6$
    \[H_i(\mmm{M}^{\mathrm{Pin}^+,k}_m(\mathcal{N}_{g,b},\rho))\cong H_i(\mmm{M}^{\mathrm{Pin}^+}(\mathcal{N}_{g,b},\rho)\times B(\Sigma_m\wr \mm{Z}/2)\times B(\Sigma_k\wr\Pin^+(2))).\]
\end{corollary}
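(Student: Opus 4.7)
The plan is to apply Theorem~\ref{decoupling} directly, following the same template used in the preceding corollaries for $\Spin$ and $\Spin^r$. The first task is to check that the hypothesis of the decoupling theorem holds: the map $\tau$ on homology of moduli spaces needs to be an isomorphism in the stated range. Since $W = \mathcal{N}_{g,b}$ with $m+k$ discs and points removed is diffeomorphic to $\mathcal{N}_{g, b+m+k}$ (the crosscap count $g$ is unchanged by removing points or discs from the interior), I would iterate the $\Pin^+$ stability result cited above $m+k$ times. Each iteration fixes $g$ and only changes $b$, so the range $4i \leq g - 6$ is preserved throughout.

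Next, I need to identify the two decorated factors appearing in Theorem~\ref{decoupling}. Since $\mathcal{N}_{g,b}$ is non-orientable, the group $\GL_d^\dagger$ is the full $\GL_2$ (there is no decorated-chirality condition to check here). For the disc factor, I would follow the procedure laid out in Example~\ref{ex:spin}, applied now to the central extension $1 \to \mm{Z}/2 \to \Pin^+(2) \to \O(2) \to 1$. This gives a fibration $B\mm{Z}/2 \to B\Pin^+(2) \to B\O(2) \simeq B\GL_2$, and pulling back along the contractible $E\GL_2 \to B\GL_2$ shows that $\Theta^{\Pin^+}$ is weakly equivalent to $B\mm{Z}/2$, which is already connected. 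Hence $(\Theta^{\Pin^+})_0^m \parallelsum \Sigma_m \simeq (B\mm{Z}/2)^m \parallelsum \Sigma_m$, which is the standard model for $B(\Sigma_m \wr \mm{Z}/2)$.

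For the point factor, the Borel construction $\Theta^{\Pin^+} \parallelsum \GL_2$ is equivalent to $B\Pin^+(2)$ by the correspondence of Remark~\ref{rmk:tangential-structures}, and this space is path-connected. Therefore $(\Theta^{\Pin^+} \parallelsum \GL_2)_0^k \parallelsum \Sigma_k \simeq B\Pin^+(2)^k \parallelsum \Sigma_k \simeq B(\Sigma_k \wr \Pin^+(2))$. Substituting these identifications into the conclusion of Theorem~\ref{decoupling} gives precisely the claimed formula.

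The only genuine obstacle, and it is a very mild one, is the bookkeeping that identifies $(\Theta^{\Pin^+})_0$ and its Borel construction; everything else is formal and parallels the argument given for $\Spin^r$ above. The computation is in fact slightly cleaner than the $\Spin$ case because $\Pin^+(2) \to \O(2)$ is a single $B\mm{Z}/2$-fibration without an intermediate orientation double cover, so $\Theta^{\Pin^+}$ is connected rather than having two components.
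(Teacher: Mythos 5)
Your argument is correct and follows the same route as the paper: invoke Theorem~\ref{decoupling} after checking the $\Pin^+$ stability hypothesis (the paper implicitly relies on the cited Randal-Williams isomorphism just as you do), and identify $\Theta^{\Pin^+}\simeq B\mm{Z}/2$ and $\Theta^{\Pin^+}\parallelsum\GL_2\simeq B\Pin^+(2)$ via the fibre sequence $B\mm{Z}/2\to B\Pin^+(2)\to B\GL_2$ as in Example~\ref{ex:spin}. Your observation that $\Theta^{\Pin^+}$ is already connected (so $(\Theta^{\Pin^+})_0=\Theta^{\Pin^+}$) is consistent with the paper's more cautious ``a path component of $\Theta^{\Pin^\pm}$ is weakly equivalent to $B\mm{Z}/2$'' and adds nothing problematic.
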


\begin{corollary}\label{cor:pin-}
     Let $\rho$ be a $\Pin^-$ structure on $\mathcal{N}_{g,b}$, then for all $5i\leq g-8$
     \[H_i(\mmm{M}^{\mathrm{Pin}^-,k}_m(\mathcal{N}_{g,b},\rho))\cong H_i(\mmm{M}^{\mathrm{Pin}^-}(\mathcal{N}_{g,b},\rho)\times B(\Sigma_m\wr \mm{Z}/2)\times B(\Sigma_k\wr\Pin^-(2))).\]
\end{corollary}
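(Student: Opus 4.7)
The plan is to apply Theorem~\ref{decoupling} to $W=\mathcal{N}_{g,b}$ equipped with a $\Pin^-$-structure $\rho$, following the template of Corollary~\ref{cor:pin+} almost verbatim. First, I would verify the stability hypothesis using the result of Randal-Williams recalled immediately above: the stabilisation map is a homology isomorphism in the range $5i\leq g-8$, and iterating it $m+k$ times identifies it with the map $\tau$ appearing in Theorem~\ref{decoupling}. Iteration preserves the range because removing discs does not change the non-orientable genus $g$. Since $\mathcal{N}_{g,b}$ is non-orientable, the group $\GL_d^\dagger$ in the conclusion of Theorem~\ref{decoupling} is $\GL_2$.

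The remaining task is to identify the relevant path components of $\Theta^{\Pin^-}$ and $\Theta^{\Pin^-}\parallelsum\GL_2$, which I would do by repeating the pullback procedure of Example~\ref{ex:spin}. The central extension $\mm{Z}/2\to\Pin^-(2)\to\O(2)$ yields a fibration $B\Pin^-(2)\to B\O(2)\simeq B\GL_2$ with homotopy fibre $B\mm{Z}/2$. Pulling back along $E\GL_2\to B\GL_2$ shows that $\Theta^{\Pin^-}$ is weakly equivalent to $B\mm{Z}/2$, and by Remark~\ref{rmk:tangential-structures} the associated Borel construction $\Theta^{\Pin^-}\parallelsum\GL_2$ is weakly equivalent to $B\Pin^-(2)$. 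Both spaces are path-connected, so taking the subscript $0$ is harmless.

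Substituting these identifications into Theorem~\ref{decoupling} produces
\begin{align*}
(\Theta^{\Pin^-})_0^m\parallelsum\Sigma_m &\simeq B(\Sigma_m\wr\mm{Z}/2), \\
(\Theta^{\Pin^-}\parallelsum\GL_2)_0^k\parallelsum\Sigma_k &\simeq B(\Sigma_k\wr\Pin^-(2)),
\end{align*}
and the decoupling result immediately gives the stated isomorphism in the range $5i\leq g-8$.

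The argument is almost entirely mechanical, and I do not expect any serious obstacle. The only point requiring minor care is distinguishing this case from Example~\ref{ex:spin}: for $\Spin$ in dimension $2$ the space $\Theta^{\Spin}$ has two components because $\Spin(2)$ double-covers only $\SO(2)$, whereas $\Pin^-(2)$ double-covers the full $\O(2)$, so $\Theta^{\Pin^-}$ is already connected and no additional $\{\pm 1\}$ factor appears in the wreath product.
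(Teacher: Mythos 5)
Your proof is correct and takes essentially the same route as the paper's (which treats the $\Pin^+$ and $\Pin^-$ cases in a single short proof): verify the stability hypothesis via the cited Randal-Williams range, then identify $\Theta^{\Pin^-}\simeq B\mm{Z}/2$ and $\Theta^{\Pin^-}\parallelsum\GL_2\simeq B\Pin^-(2)$ by the pullback procedure of Example~\ref{ex:spin}. Your closing remark about the connectivity distinction from the $\Spin$ case—that $\Pin^-(2)$ covers the full $\O(2)$ rather than just $\SO(2)$, so no $\{\pm1\}$ factor appears—is a correct and helpful clarification that the paper glosses over by saying only that ``a path component of $\Theta^{\Pin^\pm}$ is weakly equivalent to $B\mm{Z}/2$.''
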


\begin{proof}[Proof of Corollaries \ref{cor:pin+} and \ref{cor:pin-}]
    Using the fibre sequence 
    \[\begin{tikzcd}[row sep=tiny]
        B\mm{Z}/2 \ar[r]               & B\Pin^{\pm}(d)       \ar[r]        & B\GL_d
    \end{tikzcd}\]
    we can apply the procedure described in Example~\ref{ex:spin} to deduce that a path component of $\Theta^{\Pin^{\pm}}$ is weakly equivalent to $B\mm{Z}/2$, and $\Theta^{\Pin^{\pm}}\parallelsum\GL_2$ is the space $B\Pin^{\pm}(2)$.
\end{proof}

\medskip
\subsection{Applications for high dimensional manifolds}\label{subsec:app-high-dim}
One of the most interesting applications of the decoupling result appears when looking at higher dimensional manifolds. In \cite{MR3665002}, it was shown that the hypothesis of the decoupling theorem holds for many manifolds $W$ of even dimension greater or equal to $6$, and many tangential structures. The range of the homology isomorphism is given in terms of the stable genus of $W$, which we now recall, following the notation of \cite{galatius2018moduli}.

Analogously to the surface case, the genus will be measured by disjoint embeddings of the space $(S^n\times S^n)\setminus\{*\}$, but now taking into account the tangential structure as well. Namely, Galatius and Randal-Williams define what it means for a $\Theta$-structure on $(S^n\times S^n)\setminus \{*\}$ to be \emph{admissible} (see \cite[Section 3.2]{galatius2018moduli}) and define the \emph{genus} of a manifold $W$ with $\Theta$-structure $\rho_W$ to be 
\[g(W,\rho_W)=\max\left\{g\in\mm{N}\left|
\begin{tabular}{c}
     \text{\small there are $g$ disjoint embeddings $j:(S^n\times S^n)\setminus \{*\}\hookrightarrow W$}\\ \text{\small such that $j^*\rho_W$ is admissible}
\end{tabular}
\right.\right\}.\]

The \emph{stable genus} of $(W,\rho_W)$ is defined to be
\[\overline{g}(W,\rho_W)=\max\left\{g\left(W\# W_{k,1},\rho^{(k)}_W\right)-k|k\in \mm{N}\right\}\]
where $W\# W_{k,1}$ is obtained from $W$ by removing $k$ discs and attaching $k$ copies of $(S^n\times S^n)\setminus\int(D^{2n})$ along the new boundary. The $\Theta$-structure $\rho^{(k)}_W$ is obtained by extending the restriction of $\rho_W$ by any admissible structure on $(S^n\times S^n)\setminus\int(D^{2n})$.

\begin{lemma}\label{lemma:genus}
    Let $W$ be a smooth compact manifold of dimension $2n\geq 2$, $L\subset \int(W)$ a closed submanifold of dimension $\leq n-1$, and $N$ a tubular neighbourhood of $L$. Then the genus of $W\setminus N$ is equal to the genus of $W$.
\end{lemma}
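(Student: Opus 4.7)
The plan is to establish the two inequalities $g(W\setminus N,\rho_W|_{W\setminus N}) \leq g(W,\rho_W)$ and $g(W,\rho_W) \leq g(W\setminus N,\rho_W|_{W\setminus N})$ separately. The first is immediate: any $g$ disjoint admissible embeddings of $M\defeq(S^n\times S^n)\setminus\{*\}$ into $W\setminus N$ compose with the inclusion $W\setminus N\hookrightarrow W$ to give $g$ such embeddings into $W$, the pullback $\Theta$-structures being unchanged.

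For the reverse inequality, given disjoint admissible embeddings $\phi_1,\dots,\phi_g\colon M\hookrightarrow W$, the plan is to isotope them through admissible embeddings so that their images all lie in $W\setminus N$. The guiding observation is that $M$ deformation retracts onto an $n$-dimensional core $C_0\simeq S^n\vee S^n$ (the wedge of the two standard generators of $H_n(S^n\times S^n)$), and a compact codimension-zero submanifold $K\subset M$ containing $C_0$ is diffeomorphic to $W_{1,1}$, with $M$ recovered from $K$ by attaching an open collar. Since $\dim C_0+\dim L \leq n+(n-1)=2n-1<2n=\dim W$, a smooth general-position argument---applied sphere-by-sphere to deal with the non-manifold wedge point, and upgraded to an ambient isotopy of $W$ via the isotopy extension theorem---lets us arrange $\phi_i(C_0)\cap L=\emptyset$ for all $i$, while keeping the embeddings pairwise disjoint. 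Admissibility is preserved because an ambient isotopy induces a concordance of pullback $\Theta$-structures.

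Once the cores are disjoint from $L$, compactness yields a tubular neighbourhood $N''\subset N$ of $L$ disjoint from $\bigcup_i\phi_i(C_0)$. Precomposing each $\phi_i$ with a self-embedding of $M$ isotopic to the identity whose image sits in an arbitrarily small open neighbourhood of $C_0$ (for instance by rescaling the disc-bundle fibres in a plumbing description of $\mathrm{int}(W_{1,1})$ and simultaneously shrinking the external collar), we obtain new embeddings whose images lie in arbitrarily small neighbourhoods of $\phi_i(C_0)$ in $W$, in particular inside $W\setminus N''$. Finally, uniqueness of tubular neighbourhoods gives a diffeomorphism $F\colon W\to W$ isotopic to the identity with $F(N'')=N$, so that $F(W\setminus N'')=W\setminus N$; composition with $F$ delivers the desired disjoint admissible embeddings into $W\setminus N$. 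The main technical obstacle is the general-position step with the singular wedge $S^n\vee S^n$, which I would handle by first isotoping the wedge point off $L$ (easy since $L$ has positive codimension) and then perturbing each of the two $n$-spheres away from the wedge point in turn.
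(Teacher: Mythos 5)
Your proof is correct and takes essentially the same approach as the paper: one inequality by inclusion, and the other by a general-position (Sard's-theorem) argument exploiting that the $n$-dimensional spine of $(S^n\times S^n)\setminus\{*\}$ can be pushed off the $(n-1)$-dimensional $L$ inside the $2n$-manifold $W$, followed by shrinking the embeddings to a neighbourhood of the spine and identifying $W$ minus a small neighbourhood of $L$ with $W\setminus N$. The paper phrases the key step as producing a self-embedding of $(S^n\times S^n)\setminus\{*\}$ isotopic to the identity and avoiding $\phi^{-1}(L)$, whereas you ambient-isotope the cores off $L$ directly and then shrink; these are the same idea, with your write-up making explicit the transversality, isotopy-extension, and shrinking steps that the paper compresses into an appeal to Sard's theorem.
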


\begin{proof}
    Sard's theorem implies that for any submanifold $L' \subset (S^n\times S^n)\setminus \{*\}$ with $dim(L') \leq n-1$ there is an embedding $(S^n\times S^n)\setminus \{*\} \hookrightarrow (S^n\times S^n)\setminus \{*\}$ that avoids $L'$ and is isotopic to the identity. In particular, this implies that for any $\phi:\dcup{g}(S^n\times S^n)\setminus \{*\}\hookrightarrow W$, there is an embedding 
    \[\phi':\begin{tikzcd}\dcup{g}(S^n\times S^n)\setminus \{*\} \ar[r, hook] & \dcup{g}(S^n\times S^n)\setminus \{*\} \ar[r, hook, "\phi"]& W \end{tikzcd}\]
    that avoids $L$ and is isotopic to $\phi$. Since $W\setminus L$ is diffeomorphic to $\int(W\setminus N)$ via a diffeomorphism fixing everything but a collar of $L$, the result follows.
\end{proof}

Let $W$ be a manifold with non-empty boundary $P$, and $\rho_W$ a $\Theta$-structure on $W$. Given $M$ a cobordism from $P$ to $Q$ together with a $\Theta$-structure $\rho_M$ on $M$ which restricts to $\rho_W$ over $P$, there is an induced map
    \begin{equation}\label{eq:stability-map}
        {-\cup _P (M,\rho_M)}:\begin{tikzcd}
        \mmm{M}^\Theta(W,\rho_W) \ar[r] & \mmm{M}^\Theta(W\cup_P M,\rho_W\cup \rho_M)
        \end{tikzcd}
    \end{equation}
which is induced by the $\diff(W)$-equivariant map $\Bun^\Theta(W,\rho_W)\to \Bun^\Theta(W\cup_P M,\rho_W\cup \rho_M)$ given by extending a map by $\rho_M$, and the homomorphism $\diff(W)\to \diff(W\cup_P M)$ given by extending a map by the identity on $M$.

\begin{theorem}[\cite{MR3665002}, Corollary 1.7]\label{thm:hom-st-high-dim}
    Assume $d=2n\geq 6$, and $\Theta$ is such that $\Theta\parallelsum\GL_d$ is simply-connected. Let $\rho_W$ be an $n$-connected $\Theta$-structure on $W$ and let $g=\overline{g}(W,\rho_W)$. Given a cobordism $(M,\rho_M)$ as above such that $(M,P)$ is $(n-1)$-connected, the map 
        \begin{equation*}
         {(-\cup _P (M,\rho_M))_*}:
        \begin{tikzcd}
        H_i(\mmm{M}^\Theta(W,\rho_W)) \ar[r] & H_i(\mmm{M}^\Theta(W\cup_P M,\rho_{W\cup_P M}))
        \end{tikzcd}
    \end{equation*}
    is an isomorphism for all $3i\leq g-4$.
\end{theorem}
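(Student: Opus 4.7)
The plan is to follow the semi-simplicial resolution strategy of Galatius--Randal-Williams. First I would construct an augmented semi-simplicial space $K_\bullet(W,\rho_W) \to \mmm{M}^\Theta(W,\rho_W)$ whose space of $p$-simplices parametrizes, in the tautological $W$-bundle over the moduli space, ordered $(p+1)$-tuples of disjoint embeddings $j_i:(S^n\times S^n)\setminus \int(D^{2n}) \hookrightarrow W$ such that each $j_i^*\rho_W$ is admissible in the sense of Galatius--Randal-Williams. The stabilization map $-\cup_P (M,\rho_M)$ interacts well with this resolution: cutting out an embedded admissibly-structured handle produces a moduli space of a genus-one-smaller manifold, and the handle-attaching stabilization is, up to homotopy, a section of this cutting map.

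Second, I would establish that the augmentation $|K_\bullet(W,\rho_W)| \to \mmm{M}^\Theta(W,\rho_W)$ is roughly $\tfrac{g-3}{2}$-connected. This has two inputs: (i) a connectivity estimate for a ``tangential disjoint-handles complex'', analogous to the curve complex of Harer, proved by a parametrized Whitney-trick disjunction argument that requires $2n\geq 6$; and (ii) an obstruction-theoretic argument showing that, because $\rho_W$ is $n$-connected and $\Theta\parallelsum\GL_d$ is simply-connected, every abstract embedded handle can be made admissible after an isotopy, up to obstructions living in degrees beyond the stable range of the genus.

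Third, I would feed the augmented semi-simplicial space into the associated spectral sequence converging to $H_*(\mmm{M}^\Theta(W,\rho_W))$ and compare it to the analogous spectral sequence for $W\cup_P M$. The $(n-1)$-connectedness of $(M,P)$ guarantees that cutting along an embedded handle commutes (up to homotopy) with the handle-attaching stabilization, so the two spectral sequences are linked in each simplicial degree by a stabilization map of the same form but on a manifold of smaller genus. A double induction, on homological degree $i$ and on the genus $g$, then propagates the isomorphism up to the claimed range $3i\leq g-4$, once one checks that the base cases (small $i$ or small $g$) and the stability of the hypotheses under removing a handle both hold.

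The main obstacle is the connectivity estimate for $|K_\bullet(W,\rho_W)|$. Already the surface analogue underlying Harer's theorem requires subtle combinatorial arguments on arc and curve complexes; in the high-dimensional setting one must combine a parametrized Whitney trick on $2n$-manifolds with a delicate obstruction-theoretic analysis of when an embedded $(S^n\times S^n)\setminus\int(D^{2n})$ admits a $\Theta$-structure concordant to an admissible one. The $n$-connectedness of $\rho_W$ is precisely the hypothesis that makes these obstructions vanish in the relevant range, while the simple-connectivity of $\Theta\parallelsum\GL_d$ ensures that the path component of admissible structures is well-behaved under the resolution.
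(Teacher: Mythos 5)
The paper does not prove this theorem; it is imported verbatim from Galatius--Randal-Williams, [MR3665002, Corollary~1.7], so there is no internal proof of the author's to compare against. Your sketch captures the broad architecture of the cited argument: a (semi)simplicial resolution by embedded, admissibly $\Theta$-structured cores; a high-connectivity theorem for that resolution, proved using parametrized disjunction (whence the hypothesis $2n\geq 6$) together with the $n$-connectedness of $\rho_W$; and a spectral-sequence comparison run as a double induction on genus and homological degree.

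There are, however, several places where your outline departs from the actual argument in ways that go beyond expected loss of detail. First, the resolution GRW use is not a single augmented semi-simplicial space of disjointly embedded $(S^n\times S^n)\setminus\int(D^{2n})$'s; it is built from semi-simplicial spaces of embedded surgery data (cores with tethers and prescribed boundary behaviour), and the connectivity of the realization is established by comparison with an auxiliary discrete simplicial complex whose connectivity is proved combinatorially, rather than by a disjunction argument applied directly to the augmented realization. Second, the claim that ``the handle-attaching stabilization is, up to homotopy, a section of the cutting map'' is an oversimplification: cutting out an embedded core alters the boundary, and GRW first establish stability for a specific standard stabilizing cobordism and then bootstrap to an arbitrary $(n-1)$-connected $(M,P)$ by an additional comparison argument. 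Third, the role of the hypothesis that $\Theta\parallelsum\GL_d$ is simply connected is not to make ``every abstract embedded handle admissible after an isotopy''; it is used to control path components and to ensure that the relevant coefficient systems (the $\pi_1$-action on the homology of the fibre) are trivial, so that ordinary homology suffices in the spectral-sequence comparison. None of these make your approach wrong --- it is the right high-level strategy --- but they are exactly the points where filling in the sketch requires the full technical apparatus of the GRW paper.
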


We recall that a map is called $n$-connected if the map induced on homotopy groups $\pi_i$ is an isomorphism for $i<n$ and a surjection for $i=n$.

\begin{remark}
    In \cite[Corollary 1.7]{MR3665002}, the result above is given in much more generality, allowing arbitrary coefficient systems and providing a better stability range depending on the coefficient system and the tangential structure. We restrict ourselves to the case above, for simplicity, but remark that such generalisations can also be immediately carried out in the decoupling theorem.
\end{remark}

\begin{corollary}\label{higherdimensions}
    Assume $d=2n\geq 6$, and $\Theta$ is such that $\Theta\parallelsum\GL_d$ is simply-connected. Let $\rho_W$ be an $n$-connected $\Theta$-structure on $W$ and let $g=\overline{g}(W,\rho_W)$. Then for all $i\leq \frac{g-4}{3}$ we have an isomorphism
    \[H_i(\mmm{M}^{\Theta,k}_{m}(W,\rho_W))\cong H_i(\mmm{M}^\Theta(W,\rho_W)\times \Theta^m_0\parallelsum\Sigma_m \times (\Theta\parallelsum \GL_{2n}^\dagger)_0^k\parallelsum{\Sigma_k})\]
    where $\GL_{2n}^\dagger$ equals to $\GL_{2n}^+$ if $W$ is \wellbehaved, and is $\GL_{2n}$ otherwise.
\end{corollary}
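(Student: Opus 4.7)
The plan is to deduce Corollary~\ref{higherdimensions} from the Decoupling Theorem (Theorem~\ref{decoupling}) by verifying its hypothesis via the homology stability theorem of Galatius--Randal-Williams (Theorem~\ref{thm:hom-st-high-dim}). More precisely, by Theorem~\ref{decoupling} it suffices to show that the map
\[\tau\colon H_i(\mmm{M}^\Theta(W_{m+k}, \rho_{W_{m+k}})) \to H_i(\mmm{M}^\Theta(W, \rho_W))\]
is an isomorphism for all $i \leq (g-4)/3$.

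The manifold $W$ is recovered from $W_{m+k}$ by capping off the $(k+m)$ new boundary spheres introduced by removing the tubular neighbourhood $N$. Accordingly, set
\[M := (\partial W \times [0,1]) \,\sqcup\, \bigsqcup_{j=1}^{k+m} D^{2n},\]
viewed as a cobordism from $P := \partial W_{m+k}$ to $\partial W$, equipped with the $\Theta$-structure $\rho_M$ determined by $\rho_W|_{\overline{N}}$ together with the pullback along the collar projection. Under the canonical identification $W_{m+k}\cup_P M \cong W$, the map $\tau$ coincides with the stability map $(-\cup_P(M,\rho_M))_*$ of~\eqref{eq:stability-map}. To apply Theorem~\ref{thm:hom-st-high-dim} with $W_{m+k}$ in the role of the starting manifold, I verify three conditions. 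First, the pair $(M,P)$ is $(2n-1)$-connected: the collar pair is contractible as a pair, and each $(D^{2n}, S^{2n-1})$ is $(2n-1)$-connected; since $2n-1 \geq n-1$, the connectivity hypothesis is met. Second, $\rho_{W_{m+k}}$ remains $n$-connected: because $W$ is built from $W_{m+k}$ by attaching $(k+m)$ cells of dimension $2n$, the inclusion $W_{m+k}\hookrightarrow W$ is $(2n-1)$-connected, so the classifying map of $\rho_{W_{m+k}}$ inherits $n$-connectedness from that of $\rho_W$. Third, $\overline{g}(W_{m+k},\rho_{W_{m+k}}) \geq g$: for the point decorations this follows directly from Lemma~\ref{lemma:genus}, and for the disc decorations an ambient-isotopy argument shows that any admissible family of disjoint embeddings $(S^n\times S^n)\setminus\{*\}\hookrightarrow W$ can be moved off any prescribed small open discs while preserving admissibility. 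Theorem~\ref{thm:hom-st-high-dim} then yields that $\tau$ is an isomorphism in the range $3i \leq \overline{g}(W_{m+k},\rho_{W_{m+k}}) - 4$, which contains $3i \leq g-4$, and the Decoupling Theorem delivers the claimed decoupling isomorphism in the same range.

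The main technical obstacle is the stable-genus inequality $\overline{g}(W_{m+k},\rho_{W_{m+k}}) \geq g$ for the disc decorations, which is not covered by Lemma~\ref{lemma:genus} since the removed pieces are now top-dimensional rather than of dimension $\leq n-1$. The argument must use that any two embedded discs in a connected manifold of dimension $\geq 2$ are ambient isotopic, so that each marked disc can be moved into the (nonempty, open) complement of a chosen family of disjoint admissible embeddings; admissibility is then preserved because it depends only on the pulled-back $\Theta$-structure along the embeddings, which is unchanged by an ambient isotopy supported away from them.
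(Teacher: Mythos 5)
Your proposal follows the same route as the paper: reduce to Theorem~\ref{thm:hom-st-high-dim} via Theorem~\ref{decoupling}, check the connectivity of $(M,P)$, check that $\rho_{W_{m+k}}$ is still $n$-connected, and control the stable genus. You are right to flag the stable-genus condition explicitly --- the paper's proof of Corollary~\ref{higherdimensions} simply asserts that the hypotheses of Theorem~\ref{thm:hom-st-high-dim} are met and quotes the range in terms of $g = \overline{g}(W,\rho_W)$, which implicitly requires $\overline{g}(W_{m+k},\rho_{W_{m+k}}) \geq g$. So your checklist is, if anything, a more careful bookkeeping of the same argument.

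However, the ``main technical obstacle'' you raise is not actually an obstacle, and your workaround for it has a gap. You say Lemma~\ref{lemma:genus} does not cover the disc decorations ``since the removed pieces are now top-dimensional rather than of dimension $\leq n-1$,'' but the dimension bound in Lemma~\ref{lemma:genus} is on the \emph{core} submanifold $L$, not on the tubular neighbourhood $N$ that gets removed --- any $N$ is a codimension-$0$ open subset of $W$. For a parametrized disc $\phi_j\colon D^{2n}\hookrightarrow W$, the complement $W\setminus \phi_j(\mathrm{int}\,D^{2n})$ is diffeomorphic, by uniqueness of discs, to $W$ with a small tubular neighbourhood of the centre point $\phi_j(0)$ removed; the core is then a point, of dimension $0\leq n-1$, and Lemma~\ref{lemma:genus} applies verbatim (genus being a diffeomorphism invariant). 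So no separate argument is needed. Moreover, the substitute argument you sketch runs in the wrong direction: you propose to move each marked disc into the complement of a chosen admissible family of embeddings $(S^n\times S^n)\setminus\{*\}\hookrightarrow W$, but this complement is a closed set that need not contain any open $2n$-disc (the images of the $g$ embeddings can be dense in $W$). The correct direction --- which is what the proof of Lemma~\ref{lemma:genus} does --- is to isotope the embeddings to miss a small tubular neighbourhood of the $m+k$ core points, using that $(S^n\times S^n)\setminus\{*\}$ retracts to an $n$-skeleton and the cores have codimension $2n$. Once you recognize that the disc case is already covered by Lemma~\ref{lemma:genus}, your proof coincides with the paper's.
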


\begin{proof}
    First notice that since the map $\rho_W$ is $n$-connected and the pair $(W,W_{m+k})$ is $(2n-1)$-connected, then the restriction $\rho_{W_{m+k}}$ is still an $n$-connected $\Theta$-structure.
    
    The map $\tau:H_i(\mmm{M}^\Theta(W_{m+k},\rho_{W_{m+k}}))\to H_i(\mmm{M}^\Theta(W,\rho_W))$ in the hypothesis of the decoupling theorem, is induced by attaching $\dcup{m+k}D^{2n}$ along the $m+k$ boundary sphere components of $W_{m+k}$. Since $$(M,P)=(\dcup{m+k}D^{2n},\partial \dcup{m+k}D^{2n})$$ is $(n-1)$-connected, the hypotheses of Theorem \ref{thm:hom-st-high-dim} are satisfied, which implies that $\tau$ induces a homology isomorphism in degrees $3i\leq g-4$. Applying Theorem \ref{decoupling}, the result follows.
\end{proof}

\medskip

\begin{example}\label{ex:Wg1-framed}
    Let $W_{g,1}=(S^n\times S^n)\# D^{2n}$. Since $TW_{g,1}$ is trivialisable, we know $W_{g,1}$ admits a framing $\rho_{W_{g,1}}:\Fr(TW_{g,1})\to \GL_{2n}$ fitting into the following pullback diagram:
        \[\begin{tikzcd}
            \Fr(TW_{g,1}) \ar[r, "\rho_{W_{g,1}}"] \ar[d]  \arrow[dr,phantom, "\lrcorner", very near start] & \GL_{2n} \ar[d]\\
            W_{g,1} \ar[r] & E\GL_{2n}
        \end{tikzcd}\]
    The bottom arrow is necessarily $n$-connected because $W_{g,1}$ is $(n-1)$-connected and $E\GL_{2n}$ is weakly contractible. Therefore, $\rho$ is $n$-connected as well.
    
    Let $\overline{g}$ denote the stable genus $\overline{g}(W_{g,1},\rho_{W_{g,1}})$. By Corollary~\ref{higherdimensions}, for all $i\leq \frac{\overline{g}-4}{3}$, the group $H_i(\mmm{M}^{\mathrm{fr},k}_m(W_{g,1},\rho_{W_{g,1}}))$ is isomorphic to
    \[H_i\left(\mmm{M}^\mathrm{fr}(W_{g,1},\rho_{W_{g,1}})\times \SO(2n)^m\parallelsum\Sigma_m\times B\Sigma_k\right).\]
\end{example}
\medskip

\begin{example}\label{ex:Vd}
    Let $V_d\subset \mm{C}P^{4}$ be a smooth hypersurface determined by a homogeneous complex polynomial of degree $d$. This is an orientable chiral $6$-dimensional manifold whose diffeomorphism type depends only on the degree $d$. In section 5.3 of \cite{galatius2018moduli}, Galatius and Randal-Williams show that, if $d$ is even, there exists a $3$-connected $\Spin^c$-structure $\rho_{V_d}$ on $V_d$. They also compute an expression for the stable genus $\overline{g}(V_d,\rho_{V_d})$ in terms of $d$. 
    
    Applying the procedure of Example~\ref{ex:spin} to the fibre sequence
    \[\begin{tikzcd}[row sep=tiny]
        \{\pm 1\}\times B\U(1)\ar[r]               & B\Spin^c(d)       \ar[r]        & B\GL_d\\
    \end{tikzcd}\]
    we get that $\Theta^{\Spin^c}\simeq\{\pm1\}\times B\U(1)$, and $\Theta^{\Spin^c}\parallelsum \GL_6^{+}\simeq\{\pm1\}\times B\Spin^c(6)$.
    Therefore, by Corollary~\ref{higherdimensions}, for all $i\leq \frac{d^4-5d^3+10d^2-10d+4}{4}$, the group $H_i(\mmm{M}^{\mathrm{Spin^c},k}_m(V_d,\rho_{V_d}))$ is isomorphic to
    \[H_i\left(\mmm{M}^\mathrm{Spin^c}(V_d,\rho_{V_d})\times B(\Sigma_m\wr\U(1))\times B(\Sigma_k\wr\Spin^c(6))\right)\]
\end{example}
\medskip

The conditions on the tangential structure in Theorem~\ref{thm:hom-st-high-dim} are quite restrictive, for instance the trivial tangential structure $\Theta^*$ does not satisfy the hypothesis because $B\GL_d$ is not simply connected for any $d$. Moreover, the condition that we start with an $n$-connected $\Theta$-structure $\rho_W$ excludes many of the cases we are interested in. For instance, it implies that the manifold $W_{g,1}$ with an orientation does not satisfy the hypothesis of Theorem~\ref{thm:hom-st-high-dim}. However, in \cite[Section 9]{MR3665002}, Galatius and Randal-Williams provided a generalisation of this result for general tangential structures. In Section~\ref{section:general-tangential-structures}, we use their techniques to prove a generalisation of Theorem~\ref{main-decoupling} for high dimensional manifolds with any tangential structure. 
\section{Decoupling Submanifolds}\label{section:decoupling-submanifolds}

\noindent In Section~\ref{section:proof-of-decoupling}, we proved a decoupling result for the decorated moduli space of a manifold with marked points and discs, following the works of \cite{MR1851247,MR2439464,MR2861233}. Recently, in \cite{palmer2012configuration,palmer2018homological,palmer2018homological2} Palmer has studied manifolds equipped with more general decorations, allowed to be any embedded closed manifold $P$. In this section, we show that there is a decoupling result for these generalised decorations. As a specific example, we focus on the case where the decorations are unlinked circles, which have also been closely studied in dimension $3$ by Kupers in \cite{kupers2013homological}.

\subsection{The $L$-decorated moduli space}\label{sec:decorated-sub-moduli-space} In this section, we generalise the definition of a decorated manifold to allow more general submanifolds as decorations. Throughout, let $W$ be a smooth connected compact $d$-dimensional manifold.

\begin{definition}\label{Ldecorations}
    A $d$-dimensional \emph{$L$-decorated manifold} is a pair $(W,L)$ of a manifold $W$ together with a closed submanifold $L\subset W$. 

    Given a $L$-decorated manifold $(W,L)$, we define the \emph{decorated diffeomorphism group} $\diff_L(W)$ to be the subgroup of $\diff(W)$ of the diffeomorphisms $\psi$ such that  
        \begin{align*}
            \psi(L)=L.
        \end{align*}
\end{definition}

In other words, we are looking at the diffeomorphisms preserving the marked submanifold, but not necessarily pointwise.

\begin{definition}
    Given a closed manifold $W$ and a $\Theta$-structure $\rho_W$ on $W$, we define the \emph{$L$-decorated moduli space of $W$} to be
    \begin{align*}
        \mmm{M}^{\Theta}_L(W,\rho_W):=\Bun^{\Theta}(W,\rho_W)\parallelsum \diff_L(W).
    \end{align*} 
\end{definition}

The inclusion of groups $\diff_L(W)\to \diff(W)$ induces a map
    \begin{equation}\label{eq:forgetful-map-submanifold}
        F_L:\mmm{M}^{\Theta}_L(W,\rho_W)\to \mmm{M}^\Theta(W\rho_W)
    \end{equation}
which we call the \emph{forgetful map}.

\subsection{The evaluation map $E_L$}
Let $(W,L)$ be an $L$-decorated manifold and let $\nu_{L}\coloneqq(TW_{|L})/TL$ be the normal bundle of $L$ in $W$. Let $N$ be the tubular neighbourhood of the decoration identified as the image of an embedding $\Phi:\nu_{L}\to W$, and denote by $W_{N}$ the manifold $W\setminus N$. The theorem for decoupling submanifolds relies on understanding the difference between the $L$-decorated moduli space of $W$ and the moduli space of $W_N$. 

We start by constructing an equivariant fibre sequence relating the decorated diffeomorphism groups $\diff_L(W)$ and $\diff(W_N)$. Recall that $\diff(W_N)$ consists only of those diffeomorphisms fixing a collar neighbourhood of the boundary of $W_N$, including the newly formed boundary obtained by removing $N$. Extending a diffeomorphism by the identity on $N$, gives us a homomorphism 
    $$ \diff(W_N) \to \diff_L(W).$$

On the other hand, since any diffeomorphism $\phi\in \diff_L(W)$ fixes $L$, the differential of $\phi$ induces an isomorphism of the tangent bundle $TW_{|L}$ fixing $TL$ (not necessarily pointwise). This gives a map:
    \begin{align}\label{evaluation-normal-bundle}
        e_L:\diff_L(W) & \longrightarrow \iso(TW_{|L},TL)\\
        \phi  &\longmapsto   D\phi|_{L}
    \end{align}
where $D\phi|_{L}$ denotes the isomorphism of $TW_{|L}$ induced by the differential of $\phi$, and  $\iso(TW_{|L},TL)$ denotes the group of bundle isomorphisms of $TW_{|L}$ fitting into the following diagram:
    \[\begin{tikzcd}
        TL \ar[r, "Df"] \ar[d] & TL \ar[d]\\
        TW_{|L} \ar[r, "\overline{f}"] \ar[d] & TW_{|L} \ar[d]\\
        L \ar[r, "f"] & L.
    \end{tikzcd}\]

\begin{definition}
    For any subgroup $G\subset \ima e_L$, we define $\diff_G(W)$ to be the subgroup $e_L^{-1}(G)$. Given a closed manifold $W$ and a $\Theta$-structure $\rho_W$ on $W$, we define 
    \begin{align*}
        \mmm{M}^{\Theta}_G(W,\rho_W):=\Bun^{\Theta}(W,\rho_W)\parallelsum \diff_G(W).
    \end{align*} 
\end{definition}

Note that taking $G=\ima e_L$, one recovers precisely the definition of $\mmm{M}^\Theta_L(W,\rho_W)$. The kernel of $e_L$ consists precisely of those elements of $\diff_L(W)$ which fix the submanifold $L$ pointwise and whose differential $D_p\phi$ is the identity on every point of the submanifold $L$. We denote the kernel of $e_L$ by $\diff(W,TW|_L)$. 

\begin{lemma}\label{diffgroupsL}
    Let $(W,L)$ be a compact connected $L$-decorated manifold, then
    \begin{enumerate}[(a)]
        \item the homomorphism 
            \[e_L:\begin{tikzcd}\diff_L(W) \arrow[r] & \iso(TW_{|L},TL)\end{tikzcd}\]
        is a principal bundle;
        \item the map
            \[i:\begin{tikzcd}\diff(W_N) \arrow[r] & \diff(W,TW|_L)\end{tikzcd}\]
        is a homotopy equivalence.
    \end{enumerate}
\end{lemma}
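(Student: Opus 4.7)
The plan for (a) is to produce local sections of $e_L$ near every point of $\ima e_L$: this suffices because the kernel $\diff(W,TW|_L)$ acts freely by left multiplication on $\diff_L(W)$ and $e_L$ is equivariant, so local sections upgrade to local trivialisations of a principal bundle. To build a local section near a given $(f_0,\overline{f}_0)\in\ima e_L$, pick any $\phi_0$ with $e_L(\phi_0)=(f_0,\overline{f}_0)$; by translating with $\phi_0$ it suffices to produce a section near the identity of $\iso(TW|_L,TL)$. Fix a Riemannian metric on $W$ and use the exponential map $\exp_L \colon \nu_L \to W$ to identify a tubular neighbourhood $N$ of $L$ with a disc bundle in $\nu_L$. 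A bundle automorphism $(f,\overline{f})$ sufficiently close to $(\Id,\Id)$ lifts to a diffeomorphism of $N$ by acting as $f$ on the base $L$ and as $\overline{f}$ on each normal fibre (via $\exp_L$), and is then interpolated with the identity on $W\setminus N$ using a bump function supported in $N$. The assignment $(f,\overline{f})\mapsto \widetilde{\phi}_{(f,\overline{f})}$ is continuous in the $C^\infty$-topology and defines the desired section of $e_L$.

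The plan for (b) is an explicit Alexander-type rescaling. The inclusion $i$ is well defined since a diffeomorphism of $W_N$ extended by the identity on $N$ lies in $\diff(W,TW|_L)$ tautologically. To build a homotopy inverse, note that for $\phi\in \diff(W,TW|_L)$ the conditions $\phi|_L=\Id_L$ and $D\phi|_L=\Id$ on $TW|_L$ imply, in exponential coordinates identifying a tubular neighbourhood of $L$ with a disc bundle in $\nu_L$, a fibrewise Taylor expansion $\phi(v) = v + O(|v|^2)$ as $v\to 0$. The rescaling
\[\phi_s(v) = \tfrac{1}{s}\,\phi(s v) \qquad \text{for } s\in(0,1],\]
extends continuously to $\phi_0=\Id$ and provides a deformation of $\phi$ to the identity on arbitrarily small tubular neighbourhoods of $L$. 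Gluing $\phi_s$ (near $L$) with $\phi$ (far from $L$) via a bump function yields a continuous homotopy from $\phi$ to a diffeomorphism that is the identity on a chosen sub-tubular neighbourhood $N'\subset N$ of $L$. Identifying such diffeomorphisms with elements of $\diff(W_{N'})\cong\diff(W_N)$ via the standard collar reparametrisation provides the homotopy inverse to $i$. Applying the same rescaling parametrically shows that both composites are homotopic to the respective identities.

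The main obstacle is the continuity of the rescaling construction in part (b) in the $C^\infty$-topology on $\diff(W,TW|_L)$: one must verify that the cutoff functions that patch the rescaled $\phi_s$ near $L$ with the original $\phi$ far from $L$ can be chosen to depend continuously on both $\phi$ and $s$, and that the resulting family remains a smooth diffeomorphism of $W$ throughout. Once this technical continuity is established, the rest of the argument is formal, and the same tubular neighbourhood technology used in (a) ensures that all constructions are compatible with the group structures involved.
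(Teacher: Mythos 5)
Part (a) follows essentially the same route as the paper's proof: both produce a local section near the identity by pushing a bundle automorphism $\overline{f}\in\iso(TW_{|L},TL)$ through the exponential map of a chosen metric and then extending to a compactly supported diffeomorphism. The only structural difference is that the paper factors $e_L$ as $d\circ r$, where $r\colon\diff_L(W)\to \Emb_L(N,W)$ is the Palais--Lima restriction principal bundle and $d\colon\Emb_L(N,W)\to\iso(TW_{|L},TL)$ is the differential-along-$L$ map, constructing a local section only of $d$ and invoking Lima's theorem for the rest; you build the section of $e_L$ in one step, effectively re-deriving the ambient-extension part of Lima's result rather than quoting it. Both are fine.

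For (b) you take a genuinely different route and there is a real gap in it. The paper observes that $i$ is the map on fibres in a map of fibre sequences whose base map is $d$, and that the fibre of $d$ over the identity is the space of tubular neighbourhoods of $L$ (with fixed linearization along $L$), which is contractible; hence $d$ and therefore $i$ are weak equivalences. You instead attempt a direct Alexander-style deformation retraction of $\diff(W,TW|_L)$ onto $\diff(W_N)$ via $\phi_s(v)=\tfrac1s\phi(sv)$ and a bump-function patch. The Taylor expansion $\phi(v)=v+O(|v|^2)$ is correct, and the pointwise rescaling does converge to the identity in $C^\infty$. The gap is the patching step: for a fixed cut-off $\chi$, the interpolation $\chi\,\phi_s+(1-\chi)\,\phi$ is not automatically a diffeomorphism, and this is not merely a matter of ``continuity'' as you suggest. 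One needs to shrink the support of $\chi$ depending on the $C^2$-data of $\phi$ (so that $\phi$, $\phi_s$ and their interpolation are all $C^1$-close to the identity where $\chi'\neq 0$), and then show that the resulting family of diffeomorphisms is jointly continuous in $(\phi,s)$ and that both composites with $i$ are homotopic to the identities. This is precisely the content of the uniqueness-of-tubular-neighbourhoods theorem in parametric form; the paper's argument avoids all of these estimates by quoting that contractibility once, via the fibre sequence diagram. Your approach is salvageable but you are implicitly re-proving a standard theorem while skipping its hardest step, whereas the paper reduces (b) to that theorem cleanly.
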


\begin{proof}
    Throughout this proof, we will use a generalisation of Palais' theorem in \cite{MR0123338} proved by Lima in \cite{MR0161343}, which gives us a principal bundle
        $$\begin{tikzcd}
            \diff(W_N) \arrow[r] & \diff(W) \arrow[r] & \Emb(N,W).
        \end{tikzcd}$$
    Let $\Emb_L(N,W)$ be the subspace of embeddings $f:N\hookrightarrow W$ such that the core of $N$ is taken to our marked submanifold $L$ in $W$. Then taking the pullback along the inclusion $\Emb_L(N,W)\hookrightarrow \Emb(N,W)$ gives as the principal bundle:
        \begin{equation}\label{eq:fibre-sequence-emb-L}
            \begin{tikzcd}
            \diff(W_N) \arrow[r] & \diff_L(W) \arrow[r, "r"] & \Emb_L(N,W)
        \end{tikzcd}
        \end{equation}
    
     Write $N$ as the image of an embedding $\exp\circ\Phi:\nu_L\hookrightarrow W$, where $\Phi:\nu_L\to TW_{|L}$. Consider the forgetful map
        \[d:\Emb_{L}(N,W)\rightarrow \iso(TW_{|L},TL)\]
    taking an embedding to the map induced on the normal bundle of the zero section $L\subset N$. Then $e_L=d\circ r$. We will show $d$ is a fibre bundle, which implies $e_L$ is a principal bundle. It is enough to exhibit a local section of $d$ at a neighbourhood of the identity (for details see \cite[Part I, Section 7.4]{steenrod1999topology}).
    
    Given $\overline{f}:TW_{|L}\to TW_{|L}$ in $\iso(TW_{|L},TL)$ we can define 
        \[s_f:\begin{tikzcd}
            \nu_L \ar[r, "\Phi"] & TW_{|L} \ar[r, "\overline{f}"] & TW_{|L} \ar[r, "\exp"] & W.
        \end{tikzcd}\]
    Since the assignment $f\mapsto s_f$ is continuous and $\Emb(\nu_L,W)$ is an open subset of $\mmm{C}^\infty(\nu_L,W)$, then the space of maps $\overline{f}\in \iso(TW_L,TL)$ such that $s_f$ is an embedding, is an open neighbourhood $U$ of the identity. Therefore, the map 
        \begin{align*}
            U &\longrightarrow \Emb(\nu_L,W)\\
            \overline{f} &\longmapsto s_f
        \end{align*}
    is a local section for $d$ at the identity.
    
     \emph{Part (b):} We will show the map  $i$ is a homotopy equivalence, from the fact that it fits into the following commutative diagram of fibre sequences:
        $$\begin{tikzcd}
            \diff(W_N) \arrow[r] \arrow[d, "i"] & \diff_L(W) \arrow[r] \arrow[d, "="] & \Emb_L(N,W) \arrow[d, "d"]\\
            \diff(W,TW|_L) \arrow[r] & \diff_L(W) \arrow[r, "e_L"] & \iso(TW_{|L},TL)
        \end{tikzcd}$$
    The fiber of the forgetful map $d$ over the identity is simply the space of tubular neighbourhoods of $L$ in $W$, which is contractible. This implies $d$ is a homotopy equivalence and therefore so is $i$.
\end{proof}

We now use the map $e_L$ and Lemma~\ref{diffgroupsL} to construct the evaluation map:

\begin{proposition}\label{thefibrationL}
    Let $W$ be a compact connected manifold and $\rho_W$ a fixed $\Theta$-structure on $W$, then:
    \begin{enumerate}[(a)]
        \item\label{itm:EL-is-fibration} the homomorphism $e_L$, induces an {evaluation map}
            \[E_L:\begin{tikzcd} \mmm{M}^{\Theta}_G(W,\rho_W) \ar[r] & \Map_{\GL_d}(\Fr(TW|_L),\Theta)\parallelsum G \end{tikzcd}\]
        which is a Serre fibration onto the path component which it hits.
        \item\label{itm:homotopy-fibre-EL} Let $W_N$ be equipped with the $\Theta$-structure $\rho_{W_N}$ given by the restriction of $\rho_W$. Then $$\mmm{M}^\Theta(W_N,\rho_{W_N})$$ is the homotopy fibre of $E_L$ over its image.
    \end{enumerate}
\end{proposition}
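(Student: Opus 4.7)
The plan is to mirror the proof of Proposition \ref{thefibration}, replacing the roles of the marked points and discs by the submanifold $L$ and its tubular neighbourhood $N$. For part (\ref{itm:EL-is-fibration}), by Lemma \ref{bun-fibration} the restriction map
\[r_L:\Bun^\Theta(W,\rho_W) \to \Map_{\GL_d}(\Fr(TW|_L),\Theta)\]
is a Serre fibration. The group $\iso(TW|_L,TL)$ acts on $\Fr(TW|_L)$ by bundle automorphisms and hence on $\Map_{\GL_d}(\Fr(TW|_L),\Theta)$ by precomposition with the inverse; the map $r_L$ is $e_L$-equivariant since the $\diff_L(W)$-action on $\Bun^\Theta(W,\rho_W)$ is precomposition with the inverse differential. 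Restricting to $\diff_G(W)\subseteq \diff_L(W)$ and applying Corollary \ref{fibration-descends-to-borel}\ref{itm:fibration-descends-b} with $G_2=\diff_G(W)$, $G_3=G$, $\phi=e_L|_{\diff_G(W)}$, $S_2=\Bun^\Theta(W,\rho_W)$, and $S_3=\Map_{\GL_d}(\Fr(TW|_L),\Theta)$ then exhibits $E_L$ as a Serre fibration onto the path component it hits.

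For part (\ref{itm:homotopy-fibre-EL}), Corollary \ref{fibration-descends-to-borel}\ref{itm:fibre-over-descent-b} applied to the short exact sequence $\ker e_L \hookrightarrow \diff_G(W) \xrightarrow{e_L} G$ together with the $e_L$-equivariant fibre sequence of $r_L$ identifies the homotopy fibre of $E_L$ with $\Bun^\Theta_L(W,\rho_W)\parallelsum \ker e_L$, where $\Bun^\Theta_L(W,\rho_W)$ denotes the fibre of $r_L$ over $r_L(\rho_W)$. To identify this with $\mmm{M}^\Theta(W_N,\rho_{W_N})$, I would next apply Lemma \ref{bun-fibration} to the submanifold $N$ and compare to $r_L$ via the commutative square
\[\begin{tikzcd}
\Bun^\Theta(W,\rho_W) \ar[r, "r_N"] \ar[d, equal] & \Map_{\GL_d}(\Fr(TW|_N),\Theta) \ar[d, "i^*"] \\
\Bun^\Theta(W,\rho_W) \ar[r, "r_L"] & \Map_{\GL_d}(\Fr(TW|_L),\Theta)
\end{tikzcd}\]
induced by the zero-section inclusion $i:L\hookrightarrow N$. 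Because $\Fr(TW|_N)$ is isomorphic to the pullback of $\Fr(TW|_L)$ along the tubular projection $\pi:N\to L$, it $\GL_d$-equivariantly deformation retracts onto $\Fr(TW|_L)$, so $i^*$ is a weak equivalence; consequently the fibre of $r_N$ is homotopy equivalent to $\Bun^\Theta_L(W,\rho_W)$. But this fibre is also homeomorphic to $\Bun^\Theta(W_N,\rho_{W_N})$ by restriction and extension by $\rho_W|_N$. Combining this with the equivalence $\diff(W_N)\simeq \ker e_L$ from part (b) of Lemma \ref{diffgroupsL} yields a commutative diagram of principal bundles exactly analogous to the one concluding the proof of Proposition \ref{thefibration}, from which the desired equivalence $\mmm{M}^\Theta(W_N,\rho_{W_N}) \simeq \Bun^\Theta_L(W,\rho_W)\parallelsum \ker e_L$ follows.

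The main obstacle is the geometric bookkeeping forced by the non-triviality of $\Fr(TW|_L)$: unlike the case of discrete marked points and disc centres, where $\Fr(TW|_P)\cong \GL_d\times P$ and the evaluation lands in a genuine product, here one must handle the precomposition action of $\iso(TW|_L,TL)$ on $\Map_{\GL_d}(\Fr(TW|_L),\Theta)$ intrinsically and establish the $\GL_d$-equivariant deformation retraction of $\Fr(TW|_N)$ onto $\Fr(TW|_L)$ from the tubular neighbourhood theorem rather than by an explicit trivialisation. Once these compatibilities are in place, the remainder of the argument is formal and parallels Proposition \ref{thefibration} step by step.
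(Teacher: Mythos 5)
Your proposal is correct and follows essentially the same route as the paper: Lemma~\ref{bun-fibration} together with Corollary~\ref{fibration-descends-to-borel}\ref{itm:fibration-descends-b} for part~(a), then Corollary~\ref{fibration-descends-to-borel}\ref{itm:fibre-over-descent-b} combined with the comparison of $r_N$ and $r_L$, the identification of the fibre of $r_N$ with $\Bun^\Theta(W_N,\rho_{W_N})$, and Lemma~\ref{diffgroupsL}\textup{(b)} for part~(b). The only (harmless) difference is that you spell out the $\GL_d$-equivariant deformation retraction of $\Fr(TW|_N)$ onto $\Fr(TW|_L)$ via the tubular projection, where the paper simply asserts that $i^*$ is a homotopy equivalence because $L\hookrightarrow N$ is a strong deformation retract.
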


\begin{proof}
    \ref{itm:E-is-fibration} By Lemma \ref{bun-fibration}, the restriction map
        \[r_L:\Bun^\Theta(W,\rho_W)\to \Map_{\GL_d}(\Fr(TW|_L),\Theta)\]
    is a Serre fibration. Then the result follows by applying Corollary~\ref{fibration-descends-to-borel} to combine the fibration $r_L$ with the homomorphism
        \begin{equation}
            e_L:\diff_G(W)\rightarrow  G.
        \end{equation}
    We apply Corollary~\ref{fibration-descends-to-borel}\ref{itm:fibration-descends-b} by taking $G_2=\diff_G(W)$ and $S_2=\Bun^\Theta(W,\rho_W)$, with the usual action by precomposition with the differential. On the other hand, we take $G_3=G$, and $S_3= \Map_{\GL_d}(\Fr(TW|_L),\Theta)$ with the action induced by 
        \[\begin{tikzcd}[column sep=tiny] \iso(TW_{|L},TL) & \Fr(TW|_L). \ar[loop left, out=190, in= 170, distance=10] \end{tikzcd}\]
    Then the fibration $\Bun^\Theta(W,\rho_W)\to \Map_{\GL_d}(\Fr(TW|_L),\Theta)$ is $e_L$-equivariant and therefore, by Corollary~\ref{fibration-descends-to-borel}\ref{itm:fibration-descends-b}, we have a fibration
        \[\begin{tikzcd} \mmm{M}^{\Theta}_G(W,\rho_W) \ar[r, "E_L"] & \Map_{\GL_d}(\Fr(TW|_L),\Theta)\parallelsum G \end{tikzcd}\]
    onto the path components which it hits.
    
    \medskip
    \ref{itm:homotopy-fibre-EL} Recall that $W_N$ is defined as the submanifold $W\setminus N$, where $N$ is a tubular neighbourhood of the submanifold $L$. Then the restriction $\rho_{W_N}$ is a $\Theta$-structure on $W_N$. In the remainder of the proof, we will show that $\mmm{M}^\Theta(W_N,\rho_{W_N})$ is the homotopy fibre of $E_L$.
    
    A description of the fibre of $E_L$ can be obtained using Corollary~\ref{fibration-descends-to-borel}\ref{itm:fibre-over-descent-b} with the short exact sequence of groups being 
        \begin{equation}\label{eq:prop-ses-L}
            \ker e_L \to \diff_G(W)\xrightarrow{e_L}  G
        \end{equation}
    and the fibre sequence of $S_1\to S_2\to S_3$ being the one associated to the fibration $r_L$. The fibre of $r_L$ over $r_L(\rho_W)$ is the subspace of all elements of $\Bun^\Theta(W,\rho_W)$ which restrict to $r_L(\rho_W)$ over $L$, which we here denote $\Bun^\Theta_L(W,\rho_W)$. This space carries an action of $\ker e_L$ by precomposition with the differential, and it is simple to check that this fibre sequence is equivariant with respect to \eqref{eq:prop-ses-L}. Then by Corollary~\ref{fibration-descends-to-borel}\ref{itm:fibre-over-descent-b}, the fibre of the evaluation map $E_L$ is given by 
        \[\Bun^\Theta_L(W,\rho_W)\parallelsum\ker e_L.\]
    
    Applying Lemma \ref{bun-fibration} to both submanifolds $L$ and $N$, we obtain two fibrations fitting into the following commutative diagram
        \[\begin{tikzcd}
            \Bun^\Theta(W,\rho_W) \ar[r, "r_N"] \ar[d, "="] & \Map_{\GL_d}(\Fr(TW|_N),\Theta) \phantom{\cong \Theta^N} \ar[d, "i^*"]\\
            \Bun^\Theta(W,\rho_W) \ar[r, "r_L"] & \Map_{\GL_d}(\Fr(TW|_L),\Theta) 
        \end{tikzcd}\]
    where the right-hand vertical map is induced by the inclusion $i:L\hookrightarrow N$. Since $i$ is a strong deformation retract, the map $i^*$ is a homotopy equivalence.
    In particular, this implies that the map from the fibre of $r_N$ to $\Bun^\Theta_L(W,\rho_W)$ is a homotopy equivalence. 
    
    The fibre of $r_{_N}$ over $r_{_N}(\rho_W)$ is by definition the space of all $\Theta$ structures on $W$ which agree with $\rho_W$ on $N$. We claim that this space is homeomorphic to $\Bun^\Theta(W_N,\rho_{W_N})$, since the restriction map $r_{_{W_N}}$ takes the fibre of $r_{_N}$ bijectively to $\Bun^\Theta(W_N,\rho_{W_N})$ and it has an inverse given by extending an element by $r_{_N}\rho_{W}$.
    
    So we have a commutative diagram of principal fibre bundles
        \[\begin{tikzcd}
            \diff(W_N) \ar[r, "\simeq"] \ar[d] & \ker e_L \ar[d]\\
            \Bun^\Theta(W_N,\rho_{W_N})\times E\diff(W) \ar[r, "\simeq"] \ar[d] & \Bun^\Theta_L(W,\rho_W)\times E\diff(W) \ar[d]\\
            \mmm{M}^\Theta(W_N,\rho_{W_N}) \ar[r] & \Bun^\Theta_L(W,\rho_W)\parallelsum \ker e_L
        \end{tikzcd}\]
    where the top horizontal map is a homotopy equivalence by Lemma \ref{diffgroupsL} and the middle map is a homotopy equivalence by the discussion above. Therefore the map 
        \[\mmm{M}^\Theta(W_N,\rho_{W_N}) \to \Bun^\Theta_L(W,\rho_W)\parallelsum \ker e_L\]
    is also a homotopy equivalence, as required.
\end{proof}

Proposition \ref{thefibration} can be recovered as a special case of Proposition  \ref{thefibrationL}: let $L$ be comprised of $m+k$ points (which are the $k$ marked points and the centres of the $m$ marked discs), then $TL$ is a zero-dimensional bundle and $TW_{|L}$ is a trivial bundle of dimension $d$. Defining $G\subset \iso(TW_{|L},TL)\cong \iso(\dcup{m+k}\mm{R}^d)$ to be the subgroup $(\Sigma_k\wr\GL_d)\times \Sigma_m$, we recover precisely the case analysed in Proposition \ref{thefibration}. Note that an element of $\diff_G(W)$ can permute the $k$ marked points with no restrictions on the map induced on their tangent bundle, on the other hand, the $m$ points are allowed to be permuted, but the map induced on their tangent spaces has to be the identity. 

\subsection{Decoupling $L$-decorations}
In this section we prove the decoupling result by comparing a homotopy fibre sequence constructed in Proposition \ref{thefibrationL} to the product fibre sequence via the aforementioned spectral sequence argument.

\begin{definition}
    The \emph{decoupling map}
    \[D_L:\begin{tikzcd}
        \mmm{M}^{\Theta}_G(W,\rho_W) \ar[rr, "F_L\times E_L"] && \mmm{M}^\Theta(W,\rho_W)\times \Map_{\GL_d}(\Fr(TW|_L,\Theta))\parallelsum G
    \end{tikzcd}\]
    is the product of the forgetful map \ref{eq:forgetful-map-submanifold} and the evaluation map $E_L$ defined in Proposition~\ref{thefibrationL}.
\end{definition}

We now state the decoupling theorem:

\begin{theorem}\label{decouplingL}
    Let $(W,L)$ be an $L$-decorated manifold, with $W$ a connected compact manifold equipped with a $\Theta$-structure $\rho_W$, and $G\subset \ima e_{L}$. If $\tau:H_i(\mmm{M}^\Theta(W_N,\rho_{W_N}))\to H_i(\mmm{M}^\Theta(W,\rho_W))$ is an isomorphism in degrees $i\leq \alpha$, then for all such $i$ the decoupling map $D_L$ induces an isomorphism
    \[H_i(\mmm{M}^{\Theta}_G(W,\rho_W))\cong H_i(\mmm{M}^\Theta(W,\rho_W)\times (\Map_{\GL_d}(\Fr(TW|_L),\Theta)\parallelsum G)_0)\]
    where $(-)_0$ denotes a path component of $E_L(\rho_W)$.
\end{theorem}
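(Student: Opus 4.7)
The plan is to run the same argument as in the proof of Theorem \ref{decoupling}, now using the fibre sequence produced by Proposition \ref{thefibrationL} in place of the one from Proposition \ref{thefibration}. First I would observe that $\mmm{M}^\Theta_G(W,\rho_W)$ is path-connected (since $\Bun^\Theta(W,\rho_W)$ is, by definition, a single orbit of a path-component under $\diff_G(W)$), so the image of $E_L$ is precisely one path-component of $\Map_{\GL_d}(\Fr(TW|_L),\Theta)\parallelsum G$, which we denote by the subscript $(-)_0$. Proposition \ref{thefibrationL} then gives a homotopy fibre sequence
\[
\mmm{M}^\Theta(W_N,\rho_{W_N})\longrightarrow \mmm{M}^{\Theta}_G(W,\rho_W)\xrightarrow{E_L} (\Map_{\GL_d}(\Fr(TW|_L),\Theta)\parallelsum G)_0.
\]

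Next I would compare this with the trivial fibre sequence for the second projection from the product $\mmm{M}^\Theta(W,\rho_W)\times (\Map_{\GL_d}(\Fr(TW|_L),\Theta)\parallelsum G)_0$. By construction of the decoupling map $D_L=F_L\times E_L$, the diagram
\[
\begin{tikzcd}[column sep=small]
\mmm{M}^\Theta(W_N,\rho_{W_N}) \ar[r] \ar[d,"\tau"] & \mmm{M}^{\Theta}_G(W,\rho_W) \ar[r,"E_L"] \ar[d,"D_L"] & (\Map_{\GL_d}(\Fr(TW|_L),\Theta)\parallelsum G)_0 \ar[d,equal] \\
\mmm{M}^\Theta(W,\rho_W) \ar[r] & \mmm{M}^\Theta(W,\rho_W)\times (\Map_{\GL_d}(\Fr(TW|_L),\Theta)\parallelsum G)_0 \ar[r] & (\Map_{\GL_d}(\Fr(TW|_L),\Theta)\parallelsum G)_0
\end{tikzcd}
\]
commutes and consists of two homotopy fibre sequences, where the left-hand vertical map $\tau$ is the one appearing in the hypothesis (induced by filling in the removed tubular neighbourhood $N$ with the identity together with the restriction of $\rho_W$).

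Then I would pass to the induced map of Serre spectral sequences
\[
f:E^2_{p,q}=H_p\bigl((\Map_{\GL_d}(\Fr(TW|_L),\Theta)\parallelsum G)_0;\,H_q(\mmm{M}^\Theta(W_N,\rho_{W_N}))\bigr)
\]
converging to $H_{p+q}(\mmm{M}^\Theta_G(W,\rho_W))$, mapping to
\[
\widetilde E^2_{p,q}=H_p\bigl((\Map_{\GL_d}(\Fr(TW|_L),\Theta)\parallelsum G)_0;\,H_q(\mmm{M}^\Theta(W,\rho_W))\bigr)
\]
converging to $H_{p+q}(\mmm{M}^\Theta(W,\rho_W)\times (\Map_{\GL_d}(\Fr(TW|_L),\Theta)\parallelsum G)_0)$. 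Since $\tau$ induces isomorphisms on homology in degrees $\leq \alpha$, the coefficient systems agree in the same range and $f$ is an isomorphism on $E^2_{p,q}$ for all $p\geq 0$ and all $q\leq \alpha$. Lemma \ref{specseqargument} then delivers the desired isomorphism on abutments in total degrees $\leq \alpha$, proving that $D_L$ is a homology isomorphism in the stated range.

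There is no genuine obstacle here beyond what was already handled for Theorem \ref{decoupling}: the geometric content has been fully encapsulated in Proposition \ref{thefibrationL}, and the remainder is a formal spectral sequence comparison. The one subtle point worth stating explicitly is that $\mmm{M}^\Theta_G(W,\rho_W)$ is connected (so that we work over a single path-component of the base), which justifies restricting the target of $E_L$ to $(-)_0$ without losing any information.
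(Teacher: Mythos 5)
Your proof is correct and takes essentially the same route as the paper's: invoke Proposition~\ref{thefibrationL} to get the homotopy fibre sequence over $(\Map_{\GL_d}(\Fr(TW|_L),\Theta)\parallelsum G)_0$, compare it with the trivial product fibration via the commuting square built from $D_L=F_L\times E_L$, and conclude with the Serre spectral sequence comparison and Lemma~\ref{specseqargument}. One small slip in your parenthetical: $\Bun^\Theta(W,\rho_W)$ is by definition the orbit of the path-component of $\rho_W$ under $\diff(W)$, not under $\diff_G(W)$; the connectedness of $\mmm{M}^\Theta_G(W,\rho_W)$ therefore requires a word about $\pi_0(\diff_G(W))$ acting transitively on $\pi_0(\Bun^\Theta(W,\rho_W))$ — a point the paper itself also leaves implicit — rather than following purely from the definition as stated.
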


\medskip
\begin{proof}
    By Proposition \ref{thefibration}, $E_L$ is a fibration onto the path-components which it hits, therefore the restriction of $E_L$ to the subspace $\mmm{M}^{\Theta}_G(W,\rho_W)$ is a fibration onto the path-component of $\Map_{\GL_d}(\Fr(TW|_L),\Theta)\parallelsum G$ which it hits. We denote it 
        \[(\Map_{\GL_d}(\Fr(TW|_L),\Theta)\parallelsum G)_0.\]
    Therefore, we have a homotopy fibre sequence
    \[\begin{tikzcd}
        {\mmm{M}^\Theta(W_N,\rho_{W_N})} \arrow[r] & {\mmm{M}^{\Theta}_G(W,\rho_W)} \arrow[r, "E_L"] & (\Map_{\GL_d}(\Fr(TW|_L),\Theta)\parallelsum G)_0
    \end{tikzcd}\]
    The proof of the theorem follows from the comparison between this homotopy fibre sequence and the one associated to the trivial fibration
    \[\begin{tikzcd}
        {\mmm{M}^\Theta(W,\rho_{W})\times (\Map_{\GL_d}(\Fr(TW|_L),\Theta)\parallelsum G)_0} \arrow[r] & (\Map_{\GL_d}(\Fr(TW|_L),\Theta)\parallelsum G)_0
    \end{tikzcd}\]
    By definition of the maps in Proposition~\ref{thefibrationL}, the following is a commutative diagram of homotopy fibre sequences 
    
    \medskip
    
    \[\begin{tikzcd}[column sep=tiny]
        {\mmm{M}^\Theta(W_N,\rho_{W_N})} \arrow[r] \arrow[d, "\tau"] & {\mmm{M}^{\Theta}_G(W,\rho_W)} \arrow[r, "E_L"] \arrow[d, "D_L"] & (\Map_{\GL_d}(\Fr(TW|_L),\Theta)\parallelsum G)_0    \arrow[d, equal] \\
        {\mmm{M}^\Theta(W,\rho_{W})} \arrow[r] & {\mmm{M}^\Theta(W,\rho_{W})\times (\Map_{\GL_d}(\Fr(TW|_L),\Theta)\parallelsum G)_0} \arrow[r] & (\Map_{\GL_d}(\Fr(TW|_L),\Theta)\parallelsum G)_0
    \end{tikzcd}\]
    
    \medskip
    \noindent where the middle vertical map is the decoupling map. This induces a map of the respective Serre spectral sequences $f:E^\bullet_{p,q}\to \Tilde{E}^\bullet_{p,q}$, and since $\tau$ is a homology isomorphism in degrees $i\leq \alpha$, the map between the $E^2$ pages
    
    \[\begin{tikzcd}
            {E^2_{**}=H_*\left((\Map_{\GL_d}(\Fr(TW|_L),\Theta)\parallelsum G)_0 \; ;\; H_*(\mmm{M}^\Theta(W_N,\rho_{W_N}))\right)} \arrow[d]
            \\
            {\tilde{E}^2_{**}=H_*\left((\Map_{\GL_d}(\Fr(TW|_L),\Theta)\parallelsum G)_0 \; ;\; H_*(\mmm{M}^\Theta(W,\rho_{W}))\right)} 
        \end{tikzcd}\]
        is an isomorphism for all $q\leq \alpha$. Then by the Spectral Sequence Argument recalled in Lemma~\ref{specseqargument}, $D$ induces an isomorphism
    \[\begin{tikzcd}
            {H_i(\mmm{M}^{\Theta}_G(W,\rho_W)}) \arrow[r, "\cong"] &
            {H_i(\mmm{M}^\Theta(W,\rho_{W})\times (\Map_{\GL_d}(\Fr(TW|_L),\Theta)\parallelsum G)_0}
    \end{tikzcd}\]
    for all $i\leq \alpha$.
\end{proof}

We give an application of this theorem for high even-dimensional manifolds using \cite[Corollary 1.7]{MR3665002}, which we recalled in Theorem \ref{thm:hom-st-high-dim}.

\begin{corollary}\label{cor:decoupling-sub-high-dim}
    Let $(W,L)$ be an $L$-decorated manifold, with $W$ a compact simply-connected manifold of dimension $2n\geq6$, and $L$ of dimension less than $n$. Let $\rho_W$ be an $n$-connected $\Theta$-structure on $W$, and denote by $g$ the stable genus of $W$. Then for all $i\leq\hdrange$ and $G\subset \ima e_{L}$, the decoupling map $D_L$ induces an isomorphism
    \[H_i(\mmm{M}^\Theta_G(W,\rho_W))\cong H_i\left(\mmm{M}^\Theta(W,\rho_W)\times (\Map_{\GL_d}(\Fr(TW|_L),\Theta)\parallelsum G)_0\right)\]
    where $(-)_0$ is the path component of the image of $\rho_W$.
\end{corollary}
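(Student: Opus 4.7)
The plan is to deduce the corollary as a direct application of Theorem \ref{decouplingL}, reducing the problem to verifying the homology stability hypothesis: the stabilization map
\[\tau:H_i(\mmm{M}^\Theta(W_N,\rho_{W_N}))\to H_i(\mmm{M}^\Theta(W,\rho_W))\]
is an isomorphism for $i \leq (g-4)/3$. Since $W = W_N \cup_P N$ with $P = S(\nu_L)$ the sphere bundle of the normal bundle of $L$, the map $\tau$ is precisely an instance of the stabilization map \eqref{eq:stability-map}, to which I will apply Theorem \ref{thm:hom-st-high-dim}.

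I will then check the hypotheses of Theorem \ref{thm:hom-st-high-dim} in turn. For the simply-connectedness of $\Theta \parallelsum \GL_{2n}$: since $\rho_W$ is $n$-connected with $n \geq 3$, the induced map $W \to \Theta \parallelsum \GL_{2n}$ is also $n$-connected and hence induces an isomorphism on $\pi_1$, and combined with $\pi_1(W) = 0$ this forces $\Theta \parallelsum \GL_{2n}$ to be simply connected. For the $n$-connectedness of $\rho_{W_N}$: the codimension of $L$ in $W$ is strictly greater than $n$, so general position makes the inclusion $W_N \hookrightarrow W$ an $n$-connected map, and composing with the $n$-connected $\rho_W$ yields the required $n$-connectedness of $\rho_{W_N}$. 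For the $(n-1)$-connectedness of the cobordism $(N,P)$: this is the disc--sphere bundle pair of $\nu_L$, whose fibre has rank $2n - \dim L \geq n+1$, so the pair is in fact $n$-connected, which suffices. For the genus hypothesis: Lemma \ref{lemma:genus} applied to $W$ and to each stabilization $W \# W_{k,1}$ (noting that the stabilization can be performed away from $L$, so $(W \# W_{k,1})_N = W_N \# W_{k,1}$) yields $\overline{g}(W_N, \rho_{W_N}) = \overline{g}(W, \rho_W) = g$.

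With these verified, Theorem \ref{thm:hom-st-high-dim} gives that $\tau$ is a homology isomorphism in the range $3i \leq g-4$, and feeding this into Theorem \ref{decouplingL} yields the claimed isomorphism. I expect the proof to be essentially bookkeeping: the substantive work is already packaged in Theorems \ref{decouplingL} and \ref{thm:hom-st-high-dim}, and the only point to be careful about is to ensure that the codimension condition $\dim L < n$ transmits the connectivity of $\rho_W$ and the stable genus of $W$ intact to $W_N$, which is exactly what the codimension bound is designed to do.
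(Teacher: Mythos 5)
Your proposal is correct and follows the same route as the paper's proof: reduce to Theorem~\ref{decouplingL} by verifying the hypotheses of Theorem~\ref{thm:hom-st-high-dim} for the cobordism from $W_N$ to $W=W_N\cup_P N$. The paper's own proof is considerably terser — it only explicitly records that the disc--sphere bundle pair $(N,\partial N)$ is $(n-1)$-connected (in fact it contains the typo ``$c\leq n+1$'' where it means $c\geq n+1$) and that the stable genus is preserved via Lemma~\ref{lemma:genus} — whereas you also spell out that $\rho_{W_N}$ remains $n$-connected because the inclusion $W_N\hookrightarrow W$ is $n$-connected by general position (the paper makes the analogous check in Corollary~\ref{higherdimensions} but omits it here), and you correctly note that $\pi_1(\Theta\parallelsum\GL_{2n})=0$ is not a separate hypothesis to worry about: it is forced by the $n$-connectedness of $\rho_W$ together with $\pi_1(W)=0$, via the map of fibrations over $B\GL_{2n}$. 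This last observation is not made in the paper, and it tidies up what at first glance looks like a missing assumption in the corollary statement. Your parenthetical remark that $(W\#W_{k,1})_N=W_N\#W_{k,1}$, because the stabilization can be done away from $L$, is also a helpful justification of why Lemma~\ref{lemma:genus} controls the stable (not just unstable) genus of $W_N$. In short: same proof, with a couple of details filled in that the paper leaves implicit.
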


\begin{proof}
    We know that $N$ is homotopy equivalent to $L$, and that the boundary of $N$ is a sphere bundle over $L$ with fibre $S^{c-1}$, where $c$ is the codimension of $L$ and $W$. The dimension assumption on $L$ implies that $c\leq n+1$, and therefore the pair $(N,\partial N)$ is $(n-1)$-connected. Moreover, by Lemma~\ref{lemma:genus}, the stable genus of $W_N$ is equal to $g$. Hence we are under the hypothesis of Theorem \ref{thm:hom-st-high-dim} and
    \[\tau:H_i(\mmm{M}^\Theta(W_N,\rho_{W_N}))\to H_i(\mmm{M}^\Theta(W,\rho_W))\]
    is an isomorphism for all $i\leq\hdrange$. By Theorem \ref{decouplingL}, the result follows.
\end{proof}

\medskip
\subsection{Decoupling unlinked circles}

In this section, we apply Theorem \ref{decouplingL} to the specific case where $L$ is a collection of $k$ unlinked circles. Therefore, throughout this section, we assume $W$ to be a compact simply-connected manifold of dimension $2n\geq6$, to satisfy the hypothesis of Corollary \ref{cor:decoupling-sub-high-dim}. Note that, since $W$ is simply-connected, it is always orientable.

\begin{definition}
    An embedding $f:\dcup{k}S^1\to W\setminus\partial W$ is said to be \emph{unlinked} if it extends to an embedding $\overline{f}:\dcup{k}D^2\to W\setminus\partial W$. If $W$ is oriented and $2$-dimensional we also assume that the embedding $\overline{f}$ is orientation preserving.
\end{definition}
    
\begin{notation}
    Throughout this section, we let $kS^1$ denote the space $\dcup{k}S^1$, and $kD^2$ denote the space $\dcup{k}D^2$.
\end{notation}

In this section, we will repeatedly use the following result, which follows from \cite[Chapter 8, Theorems 3.1, 3.2]{Hirsch}.

\begin{lemma}\label{lemma:swapping discs}
    Let $W$ be a connected $d$-manifold and and $f,g:kD^2\hookrightarrow W$ embeddings of $k$ disjoint discs into $W$. If $d=2$ and $W$ is oriented, assume also that $f$ and $g$ both preserve, or both reverse, orientation. Then there is a diffeomorphism $\phi$ of $W$ which is diffeotopic to the identity, such that $\phi\circ f=g$.
\end{lemma}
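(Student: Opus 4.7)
The plan is to proceed by induction on $k$, reducing everything to the classical isotopy extension theorem \cite[Ch.~8, Thm.~3.1]{Hirsch}: any smooth isotopy of a compact submanifold extends to a diffeotopy of the ambient manifold. Thus it suffices to exhibit a smooth path from $f$ to $g$ in the space $\Emb(kD^2,W)$, respecting the orientation constraint when $d=2$; isotopy extension then produces the diffeomorphism $\phi$, automatically diffeotopic to the identity because the lifted ambient isotopy starts there. The bulk of the work is in the base case $k=1$.

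For $k=1$, given $f,g:D^2\hookrightarrow W$, connectedness of $W$ provides a smooth path from $f(0)$ to $g(0)$, which by isotopy extension lifts to a diffeotopy of $W$; this reduces us to the case $f(0)=g(0)=p$. The differentials $d_0f$ and $d_0g$ are linear injections $\mm{R}^2\to T_pW$, i.e.\ $2$-frames in $T_pW$. The space of such frames is path-connected when $d\geq 3$ and has exactly two components, distinguished by orientation, when $d=2$; in either case the hypothesis places $d_0f$ and $d_0g$ in the same component. After joining them by a smooth path of $2$-frames and exponentiating in a chart centred at $p$ while simultaneously shrinking the discs, one obtains a smooth isotopy of embedded discs from (a shrinking of) $f$ to (a shrinking of) $g$. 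A final application of isotopy extension then yields the desired diffeomorphism of $W$. This local step is the main obstacle, and it is exactly where Theorems~3.1 and~3.2 of \cite[Ch.~8]{Hirsch} are essential.

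For the inductive step, assume the result for $k-1$. Apply the base case to $f_k$ and $g_k$ to obtain $\psi\in\diff(W)$, diffeotopic to the identity, with $\psi\circ f_k=g_k$; replacing $f$ by $\psi\circ f$ we may assume $f_k=g_k$. Set $W':=W\setminus g_k(\int(D^2))$, a $d$-manifold with boundary $\partial W\sqcup g_k(S^1)$. It is connected because $W$ is connected and an embedded $2$-disc cannot separate a manifold of dimension $\geq 2$, since the boundary sphere $g_k(S^1)$ lies in the closure of the complement. The restrictions of $f$ and $g$ to the first $k-1$ disc components land in $W'$, and when $d=2$ they inherit the orientation condition from $W$. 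By the inductive hypothesis there is $\psi'\in\diff(W')$, diffeotopic to the identity, with $\psi'\circ f_i=g_i$ for $i<k$. Since $\psi'$ is the identity near $\partial W'$, extending it by the identity on $g_k(D^2)$ gives a diffeomorphism of $W$ diffeotopic to the identity, and composing with $\psi$ yields the required $\phi$.
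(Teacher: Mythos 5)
Your base case is essentially the standard proof of the Disc Theorem (connect the centres, linearise in a chart via the shrinking trick, join the two $2$-frames through injective linear maps, then apply isotopy extension), and this is exactly the content of \cite[Ch.~8, Thm.~3.1]{Hirsch} that the paper itself simply cites (the paper gives no argument beyond invoking Theorems 3.1 and 3.2 there). So up to the case $k=1$ you are on the intended route.

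The inductive step, however, has a genuine gap. You set $W'=W\setminus g_k(\int(D^2))$ and call it ``a $d$-manifold with boundary $\partial W\sqcup g_k(S^1)$''; this is only true when $d=2$. For $d\geq 3$ the removed set has codimension $d-2\geq 1$, and near a point of $g_k(S^1)$ the complement looks locally like $\mm{R}^{d-2}\times(\text{plane slit along an open ray})$, which is not even locally compact along the circle, hence not a manifold with boundary. So the inductive hypothesis (a statement about $d$-manifolds) cannot be applied to $W'$, and the step fails as written in precisely the dimensions $d\geq 3$ that the paper needs (e.g.\ in Lemmas~\ref{lemma: diffkS1 independednt of choice} and \ref{surjectivity}). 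There is a second, related problem: even when $W'$ is a manifold, the statement you are inducting on only produces a $\psi'$ \emph{diffeotopic to the identity}; it says nothing about $\psi'$ being the identity near $\partial W'$, so the phrase ``since $\psi'$ is the identity near $\partial W'$, extend it by the identity on $g_k(D^2)$'' is not justified by your inductive hypothesis. Both issues are repairable: instead of deleting the open disc, delete the interior of a closed regular neighbourhood $N\cong D^d$ of $g_k(D^2)$, chosen (by compactness and disjointness) to miss the images of the remaining $2(k-1)$ discs; then $W\setminus \int(N)$ is a connected $d$-manifold with boundary, and you should strengthen the statement proved by induction to assert that $\phi$ and the ambient diffeotopy can be taken to be compactly supported in the interior, away from $\partial W$ --- which is what the isotopy extension theorem actually delivers, since your disc isotopies have compact image in the interior. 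Alternatively, avoid cutting altogether and match the discs one at a time, choosing each ambient isotopy to be supported in the (connected) complement of small neighbourhoods of the already-matched discs; or simply quote \cite[Ch.~8, Thm.~3.2]{Hirsch}, which is the several-discs statement the paper relies on.
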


An immediate consequence of the result above is the following

\begin{lemma}\label{lemma: diffkS1 independednt of choice}
    The isomorphism type of $\diff_{f(kS^1)}(W)$ does not depend on the choice of the unlinked embedding $f:kS^1\hookrightarrow W$.
\end{lemma}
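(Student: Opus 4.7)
The plan is to mimic the argument of Lemma~\ref{lemma: diffkm independednt of choice}, replacing the role of isotopies of points/discs with the use of Lemma~\ref{lemma:swapping discs} applied to disc extensions of the circle embeddings. The key input is that, by definition, every unlinked embedding $f\colon kS^1\hookrightarrow W$ extends to an embedding $\overline{f}\colon kD^2\hookrightarrow W$; so we can always reduce a question about circle embeddings to a question about disc embeddings, where Lemma~\ref{lemma:swapping discs} applies directly.

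More precisely, given two unlinked embeddings $f,f'\colon kS^1\hookrightarrow W\setminus\partial W$, first choose extensions $\overline{f},\overline{f'}\colon kD^2\hookrightarrow W\setminus\partial W$ witnessing that $f$ and $f'$ are unlinked. Since $W$ is a simply-connected manifold of dimension $2n\geq 6$, the orientation hypothesis of Lemma~\ref{lemma:swapping discs} is vacuous, so there is a diffeomorphism $\psi\in\diff(W)$, diffeotopic to the identity, such that $\psi\circ\overline{f}=\overline{f'}$. Restricting to the boundary circles yields $\psi\circ f=f'$, and in particular $\psi\bigl(f(kS^1)\bigr)=f'(kS^1)$.

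It then follows that conjugation by $\psi$,
\[
    c_\psi\colon\diff_{f(kS^1)}(W)\longrightarrow \diff_{f'(kS^1)}(W),\qquad \phi\longmapsto \psi\circ\phi\circ\psi^{-1},
\]
is a well-defined group homomorphism: if $\phi$ preserves $f(kS^1)$ setwise, then $\psi\phi\psi^{-1}$ preserves $\psi(f(kS^1))=f'(kS^1)$ setwise. The same construction with $\psi^{-1}$ provides a two-sided inverse, giving the desired isomorphism.

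No step is a serious obstacle; the only thing to be careful about is to invoke Lemma~\ref{lemma:swapping discs} for the disc extensions rather than the circles themselves (one cannot isotope circles freely because of linking phenomena, which is exactly why the ``unlinked'' hypothesis, and hence the chosen disc extensions, is needed). Because $\psi$ depends on the choice of extensions $\overline{f},\overline{f'}$, the isomorphism $c_\psi$ is not canonical, but the abstract isomorphism type of $\diff_{f(kS^1)}(W)$ is independent of $f$, which is exactly the content of the lemma.
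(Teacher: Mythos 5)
Your proof is correct and follows essentially the same approach as the paper: extend the unlinked circle embeddings to disc embeddings, apply Lemma~\ref{lemma:swapping discs} to produce a diffeomorphism $\psi$ carrying one to the other, and conclude by conjugation. You are slightly more explicit than the paper about restricting the identity $\psi\circ\overline{f}=\overline{f'}$ to the boundary circles and about why the orientation clause of Lemma~\ref{lemma:swapping discs} is vacuous here, but these are just written-out details of the same argument.
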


\begin{proof}
    For any two unlinked embeddings $f,g:kS^1\hookrightarrow W$, there are embeddings $\overline{f},\overline{g}:kD^2\hookrightarrow W$ extending $f,g$. By Lemma~\ref{lemma:swapping discs}, there is a diffeomorphism $\phi$ of $W$ with $\phi\circ f=g$. Then conjugation with $\phi$ defines an isomorphism between the group of diffeomorphism preserving $f(kS^1)$ and the one preserving $g(kS^1)$.
\end{proof}

From here on, we denote by $\diff_{kS^1}$ the isomorphism type of $\diff_{f(kS^1)}(W)$ for any embedding $f:kS^1\hookrightarrow W$, which is well-defined by Lemma~\ref{lemma: diffkS1 independednt of choice}.

We want to use Theorem \ref{decouplingL} for the case where the submanifold $L$ is an collection of unlinked circles, but instead of choosing a subgroup of $G$, we will take $G=\ima e_{kS^1}$, so we start by analysing what this image is. Fix an unlinked embedding of $kS^1$ in $W$ (we will refer to it as $kS^1\subset W$), since $W$ is orientable and by fixing a Riemmannian metric we get an explicit isomorphism
    \[TW_{|kS^1}\cong TkS^1\oplus \nu_{kS^1}\]
    
\begin{lemma}\label{lemma:isos of the normal bundle}
    There is a quotient map $q:\iso(TW_{|kS^1},TkS^1)\to \iso(\nu_{kS^1})$ which is a homomorphism, a Serre fibration and a homotopy equivalence.
\end{lemma}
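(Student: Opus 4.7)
My plan is to use the Riemannian splitting $TW|_{kS^1}\cong TkS^1\oplus\nu_{kS^1}$ to exhibit $q$ as a trivial principal bundle with contractible fibre. First, since any $\bar f\in\iso(TW|_{kS^1},TkS^1)$ preserves $TkS^1$, it descends to an automorphism of the quotient $TW|_{kS^1}/TkS^1\cong\nu_{kS^1}$; I define $q(\bar f)$ to be this induced map. Functoriality of the quotient makes $q$ a continuous group homomorphism, settling the first claim.

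The key structural observation is that the defining diagram of $\iso(TW|_{kS^1},TkS^1)$ forces $\bar f|_{TkS^1}$ to equal $Df$, where $f\in\diff(kS^1)$ is the diffeomorphism covered by $\bar f$. In the splitting, every $\bar f$ therefore has block form
\[\bar f=\begin{pmatrix} Df & \beta \\ 0 & \gamma\end{pmatrix},\qquad \gamma=q(\bar f),\]
where $\beta$ is a bundle homomorphism $\nu_{kS^1}\to TkS^1$ covering $f$. This yields a continuous group-theoretic section $s:\iso(\nu_{kS^1})\to\iso(TW|_{kS^1},TkS^1)$ by $s(\gamma)=\bigl(\begin{smallmatrix}Df & 0 \\ 0 & \gamma\end{smallmatrix}\bigr)$; continuity of $s$ reduces to continuity of the projection $\gamma\mapsto f$ to the base and of differentiation in the $C^\infty$ topology, and a direct block computation gives $s(\gamma_1\gamma_2)=s(\gamma_1)s(\gamma_2)$.

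To conclude, identify the kernel $K=\ker q$ with those $\bar f$ having $\gamma=\mathrm{id}$; this forces $f=\mathrm{id}$ and so $Df=I$, leaving $\bar f=\bigl(\begin{smallmatrix}I & \beta \\ 0 & I\end{smallmatrix}\bigr)$ parametrised by the real vector space $\Gamma(\Hom(\nu_{kS^1},TkS^1))$, so $K$ is contractible. Combining $s$ with multiplication by $K$ then gives a homeomorphism
\[K\times\iso(\nu_{kS^1})\xrightarrow{\ \cong\ }\iso(TW|_{kS^1},TkS^1),\qquad (\beta,\gamma)\longmapsto \bigl(\begin{smallmatrix}I & \beta \\ 0 & I\end{smallmatrix}\bigr)\cdot s(\gamma),\]
intertwining $q$ with projection onto the second factor. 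Hence $q$ is a trivial principal $K$-bundle, in particular a Serre fibration, and a homotopy equivalence because $K$ is contractible. The only mildly delicate point is verifying continuity of the section and of the trivialisation in the $C^\infty$ topology on bundle maps, which I expect to be routine; the conceptual content is that the constraint $\bar f|_{TkS^1}=Df$ removes all freedom in the $TkS^1$-block beyond the linear off-diagonal $\beta$.
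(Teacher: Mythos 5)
Your proposal is correct and, at its core, uses the same Riemannian splitting $TW|_{kS^1}\cong TkS^1\oplus\nu_{kS^1}$ that the paper invokes, together with the same identification of the fibre over the identity with $\Gamma\bigl(\Hom(\nu_{kS^1},TkS^1)\bigr)$. Where you go slightly further is in how the Serre fibration property is justified: the paper states it is ``simple to check'' the homotopy lifting property directly from the splitting, whereas you make the mechanism fully explicit by first noting that $\overline f|_{TkS^1}=Df$ is forced by the defining diagram, then producing a continuous group-theoretic section $s(\gamma)=\bigl(\begin{smallmatrix}Df & 0\\ 0 & \gamma\end{smallmatrix}\bigr)$, and finally exhibiting $q$ as a trivial principal $K$-bundle via $(\beta,\gamma)\mapsto\bigl(\begin{smallmatrix}I & \beta\\ 0 & I\end{smallmatrix}\bigr)\cdot s(\gamma)$. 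This folds the fibration property and the homotopy-equivalence property into a single structural statement, which is a cleaner packaging of the same underlying idea; the one point you correctly flag as needing care (continuity of $\gamma\mapsto f\mapsto Df$ in the $C^\infty$ topology) is indeed routine and implicit in the paper's version as well.
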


\begin{proof}
     Any isomorphism $\overline{f}\in\iso(TW_{|kS^1},T{kS^1})$ satisfies $\overline{f}(TkS^1)=TkS^1$. Therefore, it induces a map on the quotient bundle $[\overline{f}]:TW_{|kS^1}/TkS^1=\nu_{kS^1}\to \nu_{kS^1}$. Using the inclusion $\nu_{kS^1}\to TW_{|kS^1}$ induced by the choice of a Riemannian metric, it is simple to check that this map satisfies the homotopy lifting property of Serre fibrations. Moreover, using the identification $TW_{|kS^1}\cong TkS^1\oplus \nu_{kS^1}$, we can verify easily that the fibre of $q$ over the identity is the space of sections of the vector bundle $\Hom(\nu_{kS^1}, T{kS^1})\to {kS^1}$ which is contractible.
\end{proof}

Since the normal bundle of the marked circles is also orientable and any orientable vector bundle over a circle is trivial, we know there is a bundle isomorphism $\nu_{kS^1}\cong kS^1\times \mm{R}^{d-1}$ giving a short exact sequence 
    \[\begin{tikzcd}
        C^\infty(S^1,\GL_{d-1})^k \ar[r] & \iso(\nu{kS^1}) \ar[r, "f"] & \diff(kS^1)
    \end{tikzcd}\]
where $f$ takes an isomorphism of $\nu_{kS^1}$ to the underlying diffeomorphism of the base $kS^1$. The map $\diff(kS^1)\to \iso(\nu{kS^1})$ defined by taking $\phi$ to the isomorphism $\phi\times {\Id}$ is a section for $f$, and therefore
    \begin{equation}\label{eq:map-eks1}
    \iso(\nu_{kS^1})\cong C^\infty(S^1,\GL_{d-1})^k\rtimes\diff(kS^1).
    \end{equation}
Fixing such isomorphism, the evaluation map \eqref{evaluation-normal-bundle} together with the quotient map of Lemma \ref{lemma:isos of the normal bundle} induce a homomorphism
    \begin{equation*}
        \overline{e_{kS^1}}:\begin{tikzcd}\diff_{kS^1}(W) \arrow[r] & C^\infty(S^1,\GL_{d-1})^k\rtimes\diff(kS^1).
        \end{tikzcd}
    \end{equation*}
We want to determine the image of the map $\overline{e_{kS^1}}$, which is equivalent to identifying the isomorphisms of the normal bundle of the circles that can actually be realised by a diffeomorphism of $W$. 

\begin{figure}[h]
        \hfill
        \subfigure{\includegraphics[width=0.45\textwidth]{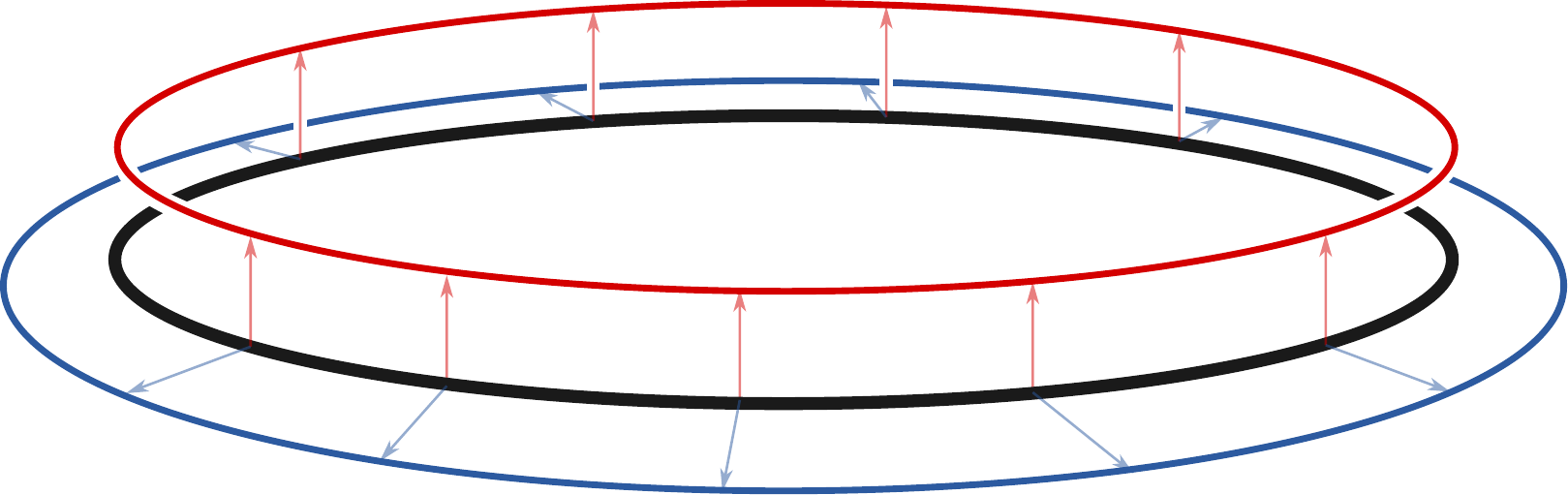}}
        \hfill
        \subfigure{\includegraphics[width=0.45\textwidth]{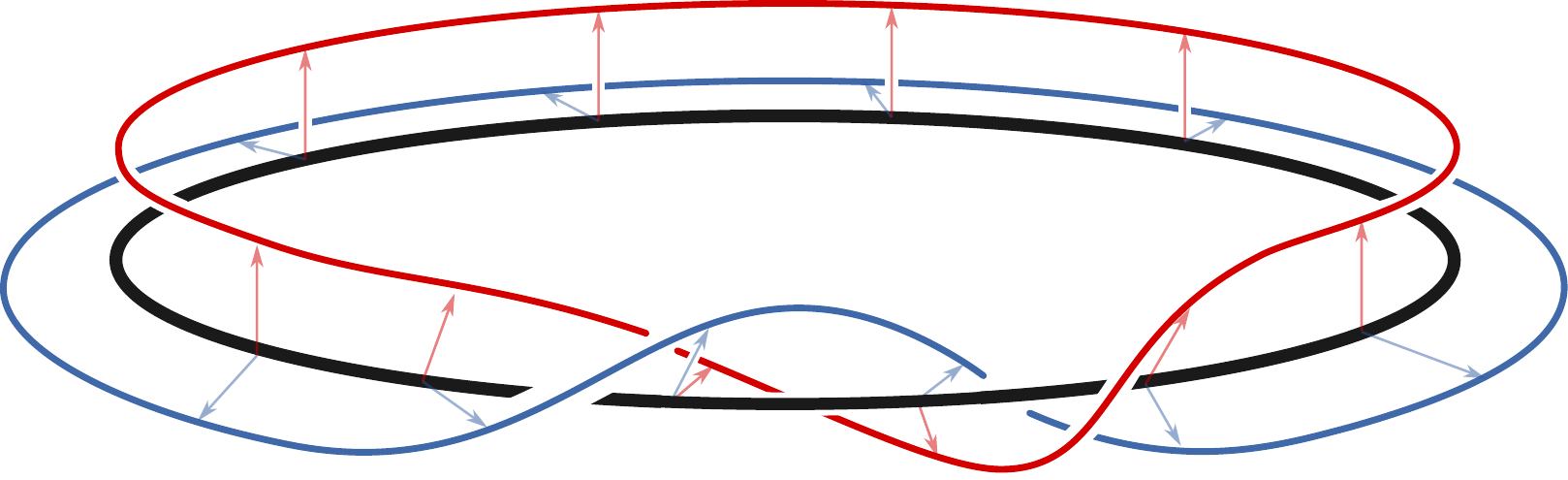}}
        \hfill \hfill
        \caption{A non trivial isomorphism of the normal bundle of $S^1$ in a $3$-dimensional manifold.}
        \label{fig:twist}
\end{figure}

For simplicity, we first look at the surface case:

\begin{lemma}\label{lemma: surjectivity for surfaces}
    Let $S_{g,b}$ be the oriented surface of genus $g$ and $b\geq 1$ boundary components. Then the image of $\overline{e_{kS^1}}$ is
    \[C^\infty(S^1,\GL_1^+)\rtimes\diff^+(kS^1).\]
\end{lemma}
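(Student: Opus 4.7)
The plan is to prove the two inclusions separately: that the image is contained in $C^\infty(S^1,\GL_1^+)^k\rtimes\diff^+(kS^1)$, and that every such element is realized.

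For the containment, take $\phi\in\diff_{kS^1}(S_{g,b})$; since $b\geq 1$ the diffeomorphism $\phi$ is automatically orientation-preserving. Denote the marked circles by $C_1,\dots,C_k$, their bounding discs by $D_1,\dots,D_k$, and let $\beta\in\Sigma_k$ be the permutation of circles induced by $\phi$. I first claim $\phi(D_i)=D_{\beta(i)}$: the image $\phi(D_i)$ is a smoothly embedded disc in $S_{g,b}$ with boundary $C_{\beta(i)}$, and since $C_{\beta(i)}$ is null-homotopic it separates $S_{g,b}$ into two regions; one is $D_{\beta(i)}$ and the other contains all components of $\partial S_{g,b}\neq\emptyset$, so it is not a disc. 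Hence $\phi(D_i)=D_{\beta(i)}$, and the restriction $\phi|_{D_i}$ is an orientation-preserving diffeomorphism of discs, which forces $\phi|_{C_i}:C_i\to C_{\beta(i)}$ to preserve the boundary orientation. Thus $\phi|_{kS^1}\in\diff^+(kS^1)$. Using the oriented splitting $TS_{g,b}|_{C_i}\cong TC_i\oplus\nu_{C_i}$ and the fact that both $\phi$ and $\phi|_{C_i}$ are orientation-preserving, the induced fiber maps of $\nu_{kS^1}$ have positive determinant, lying in $C^\infty(S^1,\GL_1^+)^k$.

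For surjectivity, I realize an arbitrary pair $(f,\sigma)\in C^\infty(S^1,\GL_1^+)^k\rtimes\diff^+(kS^1)$ in two stages. First, to realize $\sigma$: use Lemma~\ref{lemma:swapping discs} to produce an ambient diffeomorphism of $S_{g,b}$ permuting the discs $D_i$ according to the underlying permutation of $\sigma$, and then compose with a diffeomorphism supported in disjoint tubular neighborhoods of the circles realizing the remaining orientation-preserving diffeomorphism on each $S^1$ (possible because $\diff^+(S^1)$ is path-connected, so any element can be extended via isotopy extension to a diffeomorphism of the ambient surface fixing everything outside a neighborhood). By construction this element induces the identity on the normal bundles. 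To additionally realize $f$, parametrize a tubular neighborhood of each $C_i$ as $S^1\times(-\varepsilon,\varepsilon)$ via a trivialization of the normal bundle, and apply the diffeomorphism $(\theta,r)\mapsto(\theta,g_i(\theta,r))$, where $g_i(\theta,\cdot)$ is a smooth interpolation between $r\mapsto f_i(\theta)\cdot r$ near $r=0$ and the identity near $r=\pm\varepsilon$. Such an interpolation exists (and is a diffeomorphism in each slice) because $f_i>0$ pointwise. Composing these two constructions gives the required element of $\diff_{kS^1}(S_{g,b})$.

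The main subtlety is in the first part, specifically the claim that $\phi$ must send each bounding disc $D_i$ to $D_{\beta(i)}$, which is what forces the induced circle diffeomorphism to be orientation-preserving. This step is precisely where both the unlinked hypothesis (so that discs exist) and the assumption $b\geq 1$ (so that the complementary region is not itself a disc) enter crucially. The rest is a routine composition argument combining isotopy extension with the contractibility of $\GL_1^+=\mathbb{R}_{>0}$.
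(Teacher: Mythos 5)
Your proof is correct and proves both inclusions; the containment direction elaborates nicely on what the paper leaves terse. The one genuine difference is in the surjectivity direction: the paper first realizes $\diff^+(kS^1)$ via the Hirsch extension theorem (extend any $\sigma\in\diff^+(kS^1)$ to $\overline{\sigma}\in\diff^+(kD^2)$, then use Lemma~\ref{lemma:swapping discs} to find $\psi$ with $\psi\circ f = f\circ\overline{\sigma}$), and then dispatches the $C^\infty(S^1,\GL_1^+)^k$ part in one line by appealing to the fact — established in Lemma~\ref{diffgroupsL} — that $e_L$ is a principal bundle, hence $\overline{e_{kS^1}}$ hits an entire path component once it hits one point of it. You instead construct the $C^\infty(S^1,\GL_1^+)^k$ piece by hand with a slice-wise interpolation in a tubular neighborhood. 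Both work, but the paper's route is shorter and avoids the small awkwardness in your argument where you first apply Lemma~\ref{lemma:swapping discs} and then claim to realize the ``remaining'' diffeomorphism on $kS^1$ — the swapping diffeomorphism's restriction to $kS^1$ is not pinned down by Lemma~\ref{lemma:swapping discs}, so ``remaining'' is only defined after you fix that restriction; the cleaner phrasing (as in the paper) is to first extend $\sigma$ itself over $kD^2$ and apply Lemma~\ref{lemma:swapping discs} to that specific pair of disc embeddings. Your claim that the constructed isotopy-extension piece ``induces the identity on the normal bundles'' also merits a sentence of justification (e.g.\ by using a product extension $(\theta,r)\mapsto(\text{diffeo}(\theta),r)$ near the circles), but it is not a gap.
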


\begin{proof}
     Fix once and for all an embedding $f:kD^2\hookrightarrow S_{g,b}$, and consider $f(\partial kD^2)$ to be the $kS^1$ decoration in $S_{g,b}$. We start by showing that any diffeomorphism of $\diff^+(kS^1)$ can be realised by an element in $\diff_{kS^1}(S_{g,b})$. Any diffeomorphism $\phi\in\diff^+(kS^1)$ can be extended to $\overline{\phi}\in\diff^+(kD^2)$ \cite[Chapter 8, Theorem 3.3]{Hirsch}, so it is sufficient to find a diffeomorphism $\psi$ of $W$ such that $\psi\circ f=f\circ \overline{\phi}$. But this can always be done, by Lemma~\ref{lemma:swapping discs}.
     
     This implies that the coset $\overline{e_{kS^1}}(Id) \cdot \diff^+(kS^1)$ is contained in the image of $\overline{e_{kS^1}}$, and since this map is surjective on path components, we know that $C^\infty(S^1,GL_{1}^+)^k\rtimes \diff^+(kS^1)$ is contained in the image of $\overline{e_{kS^1}}$. Moreover, since we assume $b\geq 1$, then for any $k$, all elements of $\diff_{kS^1}(S_{g,b})$ are orientation preserving and fix $f(kD^2)$ as a set, which means that $\ima \overline{e_{kS^1}}$ is contained in $C^\infty(S^1,GL_{d-1}^+)^k\rtimes \diff^+(kS^1)$, as required.
\end{proof}

We now analyse $\ima \overline{e_{kS^1}}$ for a manifold $W$ of higher dimension. In particular, we need to understand when there exists a diffeomorphism of $W$ that induces a loop with non-trivial homotopy class in $\pi_1(\GL_{d-1})$ as depicted in Figure \ref{fig:twist}. It is clear that determining this image does not only depend on the orientability of $W$, as it was for the case of points and discs, but it will also depend on the spinnability of $W$.

\begin{lemma}\label{surjectivity}
    Let $W$ be a simply-connected manifold of dimension $d\geq 5$. Then the image of $\overline{e_{kS^1}}$ is 
        \[C^\infty_\diamond(S^1,\GL_{d-1})^k\rtimes \diff(kS^1)\]
    where $C^\infty_\diamond(S^1,-)$ is equal to the subspace $C^\infty_{null}(S^1,-)$ of nullhomotopic loops if $W$ is spinnable, and is equal to $C^\infty(S^1,-)$ otherwise. 
\end{lemma}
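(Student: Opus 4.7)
The plan is to split the problem into showing surjectivity onto the semi-direct base $\diff(kS^1)$ and then identifying which fibre loops in $C^\infty(S^1,\GL_{d-1})^k$ are realised. For the base: given $\phi\in\diff(kS^1)$, extend it to $\overline{\phi}\in\diff(kD^2)$ by coning (the orientation issue of dimension $2$ does not arise since $d\geq 3$), and apply Lemma \ref{lemma:swapping discs} to produce $\psi\in\diff(W)$ with $\psi\circ f = f\circ\overline{\phi}$, exactly as in the proof of Lemma \ref{lemma: surjectivity for surfaces}. Then $\psi\in\diff_{kS^1}(W)$ induces $\phi$, which reduces the problem to identifying the image of $\overline{e_{kS^1}}$ restricted to the subgroup of diffeomorphisms fixing $kS^1$ pointwise, i.e.\ the part landing in $C^\infty(S^1,\GL_{d-1})^k$.

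For realisability of all nullhomotopic loops, given $\gamma \in C^\infty(S^1, \GL_{d-1})^k$ with each component nullhomotopic, choose a nullhomotopy $\Gamma:D^2\to \GL_{d-1}$ with $\Gamma|_{\partial D^2}=\gamma$ and $\Gamma(0)=\Id$. On a tubular neighborhood $D^2\times D^{d-2}$ of the bounding disc $D^2\subset W$ (which exists because the embedding of $S^1$ is unlinked) define a diffeomorphism of the form $(z,x)\mapsto (z,\,\eta(|x|)\,\Gamma(z)\,x)$ for a suitable cut-off $\eta$, so that it extends by the identity to all of $W$ and realises $\gamma$ as the twist on $\nu_{S^1}$. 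When $W$ is \emph{not} spin, the additional non-null classes are realised by picking an embedded surface $\Sigma\subset W$ with $\langle w_2(TW),[\Sigma]\rangle\neq 0$, isotoping $S^1$ into a tubular neighborhood of $\Sigma$ (using $d\geq 5$ to avoid obstructions), and performing a Gluck-type twist supported near $\Sigma$ whose induced action on $\nu_{S^1}$ represents the generator of $\pi_1(\GL_{d-1})\cong\mathbb{Z}/2$. Composing with the previous construction produces all of $C^\infty(S^1,\GL_{d-1})^k$ in the image.

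For the obstruction when $W$ is spin: since $W$ is simply-connected, $H^1(W;\mathbb{Z}/2)=0$, so the spin structure $\sigma$ on $W$ is unique, and every diffeomorphism $\phi\in\diff_{kS^1}(W)$ satisfies $\phi^*\sigma=\sigma$. Restricting to a tubular neighborhood of $S^1$, $\phi$ must fix the induced spin structure on $\nu_{S^1}$; but the action of a twist $\gamma\in C^\infty(S^1,\GL_{d-1})$ on the two spin structures of the trivialised bundle $S^1\times\mathbb{R}^{d-1}$ is trivial iff $\gamma$ lifts to $\Spin(d-1)$, iff $\gamma$ is nullhomotopic in $\GL_{d-1}$. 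Hence for spin $W$ only nullhomotopic loops lie in the image, matching the definition of $C^\infty_\diamond$. I expect the hard part of the proof to be the realisation step for non-spin $W$: producing a concrete twist diffeomorphism representing the generator of $\pi_1(\GL_{d-1})$ requires transporting the non-spin witness geometrically to a neighborhood of $S^1$ while leaving the rest of $W$'s smooth structure intact, which uses the $d\geq 5$ dimensional freedom and careful bookkeeping of tubular neighborhoods.
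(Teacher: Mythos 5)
Your high-level strategy (reduce to the base $\diff(kS^1)$, then identify the realised fibre loops, and obstruct non-null twists when $W$ is spin) matches the paper, and the obstruction argument you propose -- uniqueness of the spin structure forces $\phi^*\sigma=\sigma$, whence $D\phi|_{\nu_{S^1}}$ lifts to $\Spin(d-1)$ -- is a genuinely different route from the paper's, which instead glues the bounding disc with its image under $\phi$ into a map $g:S^2\to W$ and uses $w_2(g^*TW)=0$ together with $d\geq 5$ to conclude the clutching function $D\phi|_{S^1}$ is nullhomotopic. But there are real gaps in the realisation steps.

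The explicit twist $(z,x)\mapsto(z,\eta(|x|)\Gamma(z)x)$ on $D^2\times D^{d-2}$ with $\Gamma:D^2\to\GL_{d-1}$ does not type-check: $x$ has $d-2$ coordinates, so a $\GL_{d-1}$ matrix cannot act on it, and at best such a formula realises only the $\GL_{d-2}$ block inside $\GL_{d-1}$. More importantly no such explicit construction is needed, and the paper does not attempt one: since $e_{kS^1}$ is a principal bundle onto its image (Lemma~\ref{diffgroupsL}) and the quotient of Lemma~\ref{lemma:isos of the normal bundle} is a fibration, the image of $\overline{e_{kS^1}}$ is an open subgroup, hence a union of path components; it therefore contains the entire component of $\overline{e_{kS^1}}(\mathrm{Id})$ for free, and more generally contains the whole component of any single element you can realise. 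Your proposal does not invoke this principle, and it is what makes the proof tractable. For the non-spin case you still need to produce one element in a non-null component, and ``perform a Gluck-type twist supported near $\Sigma$'' is not yet a diffeomorphism of $W$ fixing the marked circle pointwise with the required non-null twist on $\nu_{S^1}$. The paper supplies a concrete one: by Hurewicz, Thom, and transversality (using $d\geq 5$) find an embedded $S^2\hookrightarrow W$ with $w_2\neq 0$ disjoint from $f(D^2)$, use Lemma~\ref{lemma:swapping discs} to produce $\phi_h$ swapping its hemispheres while fixing $f(D^2)$ so that $D\phi_h$ along the equator is a non-null clutching function, then conjugate onto the marked circle. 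Finally, your spin argument silently assumes $D\phi|_{\nu_{S^1}}$ preserves the orientation of $\nu_{S^1}$ (spin structures live on oriented bundles); this fails for the orientation-preserving diffeomorphism $\phi_c$ of $W$ restricting to complex conjugation on the marked circle, which reverses orientation on $\nu_{S^1}$. The paper constructs $\phi_c$ first (a rotation of an embedded $D^d$ whose equator contains the circle) and then reduces every case to $\phi|_{S^1}=\Id$ by post-composing with $\phi_c$; without this step your obstruction argument does not cover the orientation-reversing components of $C^\infty_{null}(S^1,\GL_{d-1})$.
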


Here we are not thinking of the spaces as pointed, so we consider a nullhomotopic loop to be one that is homotopic to a constant loop, not necessarily at a base point.

\begin{proof}
    Fix once and for all an embedding $f:kD^2\hookrightarrow W$, and consider $f(\partial kD^2)$ to be the $kS^1$ decoration in $W$. By the same argument as in the proof of Lemma~\ref{lemma: surjectivity for surfaces}, we know that any diffeomorphism of $\diff(kS^1)$ can be realised by an element in $\diff_{kS^1}(W)$. Without loss of generality, we assume from now on that $k=1$.
    
    Since $\overline{e_{S^1}}$ is surjective on path components, we know that $C^\infty_{null}(S^1,GL_{d-1}^+)\rtimes \diff(S^1)$ is contained in the image of $\overline{e_{S^1}}$ because it is the path component of $\overline{e_{S^1}}(Id)$.
    
    By Lemma~\ref{lemma:swapping discs} there exists an orientation preserving diffeomorphism $\phi_c\in \diff_{S^1}(W)$ that restricts to complex conjugation along the marked circles. This implies that $\phi_c$ induces an orientation reversing diffeomorphism on the normal bundle of $S^1$. Since the marked circle bounds an embedded $2$-disc, we can define such a $\phi_c$ by taking an embedded disc $D^d$ in $W$ containing the marked circle in its equator, and applying a rotation that flips the circle. By the Isotopy Extension Theorem, such a rotation can be extended to an isotopy in $W$. Then $\overline{e_{S^1}}(\phi_c)$ is contained in $C^\infty_{null}(S^1,GL_{d-1})\rtimes \diff(S^1)$. Since $\overline{e_{S^1}}$ is surjective on path-components, we conclude that $$C^\infty_{null}(S^1,GL_{d-1})\rtimes \diff(S^1)\subset \ima \overline{e_{S^1}}.$$

    We now show that a smooth curve $\gamma \notin C^\infty_{null}(S^1,\GL_{d-1})$ is in the image of $\overline{e_{S^1}}$ if, and only if, $W$ is not spin.
    
    First, assume $W$ is spin, and choose $\phi\in\diff_{S^1}(W)$. We know that any diffeomorphism of the circle is isotopic either to the identity or to complex conjugation, so without loss of generality, we can assume that $\phi$ restricts to one of these two maps on the marked circle. Start by assuming that $\phi$ restricts to the identity on the marked circle. Then using the embedding $f:D^2\hookrightarrow W$ bounding the marked circle, we can define a continuous function $g:S^2\to W$ by sending the bottom hemisphere $D^2_-$ to $f(D^2)$, and the top hemisphere $D^2_+$ to $\phi\circ f(D^2)$. Since we assume $W$ to be spin, we know that $w_2(g^*(TW))=g^*(w_2(TW))=0$. Since $d\geq 5$, the class $w_2$ detects the only obstruction to lifting to to $E\SO(d)$ the map $S^2\to B\SO(d)$ classifying the bundle $g^*(TW)$. Since $w_2(g^*(TW))=0$, this implies that $g^*(TW)$ is a trivial bundle, and in particular, its clutching function $S^1\to \GL_d^+$ is nullhomotopic. But note that $D\phi$ along the marked circle is a clutching function of $g^*(TW)$, and therefore $\overline{e_{S^1}}(\phi)$ is contained in $C^\infty_{null}(S^1,\GL_{d-1}^+)\rtimes \diff(S^1)$.
    
    On the other hand, if $\phi$ restricts to complex conjugation on the marked circle, then composing with the map $\phi_c$ constructed above, we get a map restricting to the identity. By the same arguments as above, we can conclude that $\overline{e_{S^1}}(\phi)\in C^\infty_{null}(S^1,\GL_{d-1})\rtimes \diff(S^1)$.
    
    Now assume $W$ is not spin. Since $W$ is simply-connected, by Hurewicz theorem, all its second homology classes are represented by maps $S^2\to W$ and by \cite[Theorem II.27]{thom1954quelques} we can always pick a representative given by an embedding. Since $W$ is not spin, there exists an embedding $h:S^2\to W$ such that $w_2(h^*(TW))\neq0$, and since $d\geq 5$, we can pick one such $h$ not intersecting $f(D^2)$ by the Transversality Theorem (see \cite[Corollary 12.2.7]{kupers2019lectures}). By Lemma~\ref{lemma:swapping discs}, we know there exists a diffeomorphism $\phi_h$ of $W$ taking $\phi_h\circ h|_{D^2_-}$ to $h|_{D^2_+}$, and which restricts to the identity on the image of $f(D^2)$. By definition, $D\phi_h|_{h(S^1)}$ is a clutching function for $h^*(TW)$ and therefore is not nullhomotopic as $h^*(TW)$ is non-trivial. 
    
    Let $\psi$ be a diffeomorphism taking $f(D^2)$ to $h|_{D^2_-}$. Then $\psi^{-1}\circ \phi_h\circ \psi$ is a diffeomorphism of $W$ whose image through $\overline{e_{S^1}}$ is not in $C^\infty_{null}(S^1,\GL_{d-1}^+)\rtimes\diff(S^1)$. Since $\overline{e_{S^1}}$ is surjective on path components, we conclude that the image of $\overline{e_{S^1}}$ contains $C^\infty(S^1,\GL_{d-1}^+)\rtimes \diff(S^1)$. Analogously, looking at the composition $\psi^{-1}\circ \phi_f\circ \psi\circ \phi_c$, we conclude that the image of $\overline{e_{S^1}}$ is $C^\infty(S^1,\GL_{d-1})\rtimes \diff(S^1)$.
\end{proof}

\begin{remark}
    Lemma~\ref{surjectivity} can be generalised for dimension $4$ assuming the manifold is spin, using exactly the same argument as above. 
\end{remark}

Now that we have analysed the image of $\overline{e_{kS^1}}$ we apply Theorem \ref{decouplingL}. As before, we start by looking at the surface case.

\begin{corollary}\label{cor:decouping-circles-in-surfaces}
    Let $S_{g,b}$ be the oriented surface of genus $g$ and $b\geq 1$ boundary components. Then for all $3i\leq 2g$
    \[H_i(B\diff_{kS^1}(S_{g,b}))\cong H_i(B\diff(S_{g,b})\times B(\Sigma_k\wr\SO(2))).\]
\end{corollary}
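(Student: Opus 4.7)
The plan is to invoke Theorem \ref{decouplingL} with $\Theta=\Theta^\ast$ the trivial tangential structure (so that $\mmm{M}^{\Theta^\ast}(W,\rho_W)\simeq B\diff(W)$), $W=S_{g,b}$, $L$ an unlinked embedding of $kS^1$, and $G=\ima e_{kS^1}$ (which realizes $\diff_{kS^1}(S_{g,b})$ by construction). The two things to supply are the homological stability hypothesis needed for $\tau$, and an identification of the target of the evaluation map $E_L$.

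For the stability hypothesis, I will identify $(S_{g,b})_N$ geometrically: a tubular neighborhood $N$ of $k$ unlinked circles in $S_{g,b}$ is a disjoint union of $k$ annuli, each of them sitting inside one of the discs bounded by the unlinked circles. Cutting along them therefore yields
\[
(S_{g,b})_N \;\cong\; S_{g,b+k}\;\sqcup\; kD^2.
\]
Since $\diff(D^2,\partial D^2)$ is contractible by Smale's theorem, $B\diff((S_{g,b})_N)\simeq B\diff(S_{g,b+k})$, and the map $\tau$ is homotopic to the Harer stabilisation by capping off $k$ boundary circles with discs. By Randal-Williams' improvement of Harer stability, this map is a homology isomorphism in degrees $3i\le 2g$, which is exactly the required range.

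For the target of $E_L$, with $\Theta=\Theta^\ast$ we have $\Map_{\GL_2}(\Fr(TS_{g,b}|_{kS^1}),\Theta^\ast)=\ast$, so the target of the decoupling reduces to $\mmm{M}^{\Theta^\ast}(S_{g,b})\times BG$ where $G=\ima e_{kS^1}$. By Lemma~\ref{lemma:isos of the normal bundle}, $G$ is homotopy equivalent to $\ima \overline{e_{kS^1}}$, and by Lemma~\ref{lemma: surjectivity for surfaces} this image is $C^\infty(S^1,\GL_1^+)^k\rtimes \diff^+(kS^1)$. Since $\GL_1^+\simeq\ast$ the loop space factor is contractible, and since $\diff^+(S^1)\simeq \SO(2)$ and $\diff^+(kS^1)\cong \diff^+(S^1)^k\rtimes \Sigma_k$, we conclude
\[
BG \;\simeq\; B\bigl(\Sigma_k\wr \SO(2)\bigr).
\]
Plugging this identification and the stability statement into Theorem \ref{decouplingL} yields the claimed isomorphism $H_i(B\diff_{kS^1}(S_{g,b}))\cong H_i\bigl(B\diff(S_{g,b})\times B(\Sigma_k\wr \SO(2))\bigr)$ for all $3i\le 2g$.

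The main obstacle is the bookkeeping rather than any substantive new input: one must carefully verify that the geometric description of $(S_{g,b})_N$ really gives a disjoint union of a higher-genus surface with some discs (so that the stability map $\tau$ coincides with Harer's disc-capping map), and chain together the homotopy equivalences $G\xrightarrow{q}\ima\overline{e_{kS^1}}\simeq \diff^+(kS^1)\simeq \Sigma_k\wr \SO(2)$ from the preceding lemmas to present $BG$ in the stated form.
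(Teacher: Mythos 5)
Your proof is correct and follows essentially the same route as the paper's: apply Theorem \ref{decouplingL} with $L$ the unlinked circles and $G=\ima e_{kS^1}$, verify the stability hypothesis via the identification $(S_{g,b})_N\cong S_{g,b+k}\sqcup kD^2$ together with contractibility of $\diff(D^2,\partial D^2)$ and Randal-Williams' stability range, then identify the label factor using Lemmas \ref{lemma:isos of the normal bundle} and \ref{lemma: surjectivity for surfaces}. The only (immaterial) deviation is that you take $\Theta=\Theta^*$ where the paper takes $\Theta=\Theta^{or}$; since $b\geq1$ forces $\diff(S_{g,b})=\diff^+(S_{g,b})$, both yield $B\diff(S_{g,b})$, and your choice has the small advantage of making $\Map_{\GL_2}(\Fr(TW|_{kS^1}),\Theta)$ a single point rather than a set of components on which a path-component must be selected.
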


\begin{proof}
     Start by fixing an unlinked embedding $f:kS^1\hookrightarrow S_{g,b}$ and $\overline{f}:kD^2\hookrightarrow S_{g,b}$ extending $f$. The proof follows from applying Theorem \ref{decouplingL} taking $L$ to be the embedded circles. To do this, we start by verifying that the hypothesis of the Theorem are satisfied, ie. that the map defined by extending diffeomorphisms by the identity
     \[\varepsilon_N:\diff(S_{g,b}\setminus N)\to \diff(S_{g,b})\]
     induces isomorphisms on the homology groups of the classifying spaces in the range $3i\leq 2g$.
     
     Since the embedded circles are unlinked, we know they are nullhomotopic and therefore $S_{g,b}\setminus N$ is diffeomorphic to $S_{g,b+k}\cup\dcup{k}D^2$. Then 
     \[\diff(S_{g,b}\setminus N)\cong \diff(S_{g,b+k})\times\diff(D^2)^k.\]
     
     The inclusion $S_{g,b}\setminus N\to S_{g,b}$ induces maps 
        \[\varepsilon_S:\diff(S_{g,b+k})\to \diff(S_{g,b})\]
        \[\varepsilon_D:\diff(D^2)^k\to \diff(S_{g,b})\]
    given by extending the the diffeomorphisms by the identity, and it is simple to check that the following diagram commutes
    \[\begin{tikzcd}
        & \diff(S_{g,b})\times\diff(D^2)^k \ar[dr, "c"] &\\
        \diff(S_{g,b+k})\times\diff(D^2)^k \ar[ur, "\varepsilon_S\times id"] \ar[rr, "\varepsilon_N"] & & \diff(S_{g,b})
    \end{tikzcd}\]
    where $c(\phi,\psi):=\phi\circ \varepsilon_D(\psi)$.
    
    By \cite[Theorem 7.1]{randal2016resolutions}, we know that the map $\varepsilon_S$ induces a homology isomorphism in the range $3i\leq 2g$. Moreover, the map $c$ is a homotopy equivalence since the space $\diff(D^2)$ is contractible. Therefore, $\varepsilon_N$ induces a homology isomorphism in the range $3i\leq 2g$.
    
    Hence we are under the hypothesis of Theorem \ref{decouplingL}. Moreover, we know that $TW|_{kS^1}$ is trivial so 
        \[\Map_{\GL_d}(\Fr(TW|_{kS^1}),\Theta^{or})\simeq \Map(S^1,\{\pm 1\})^k\]
    is simply a disjoint union of points. And, by Lemma \ref{lemma: surjectivity for surfaces}, 
        \[\ima e_{kS^1}\simeq \ima \overline{e_{kS^1}}\cong C^\infty(S^1,\GL_1^+)^k\rtimes\diff^+(kS^1)\simeq \diff^+(kS^1)\simeq \Sigma_k\wr\SO(2).\]
    Applying Theorem \ref{decouplingL}, the result follows.
\end{proof}

Corollary \ref{cor:decouping-circles-in-surfaces} can be re-stated with a geometric interpretation. As discussed in Section \ref{sec:geometric-view}, the space  $\Emb(S_{g,b},\mm{R}^\infty)$ is a model for $E\diff(S_{g,b})$, and therefore it is also a model for $E\diff_{kS^1}(S_{g,b})$. With this model, the elements of $B\diff_{kS^1}(S_{g,b})$ are oriented submanifolds of $\mm{R}^\infty$ diffeomorphic to $S_{g,b}$ with $k$ marked unlinked circles. With this model, the forgetful map 
    \begin{equation}
        F_{kS^1}:B\diff_{kS^1}(S_{g,b})\to B\diff(S_{g,b})
    \end{equation}
simply forgets the marked circles. 

To interpret the evaluation map $E_{kS^1}$ in this model, we recall a definition that will also be useful for the interpretation of the evaluation map for the moduli space in higher dimensions with general tangential structures.

\begin{definition}\label{def: config of circles}
    Let $W$ be a manifold and $X$ be a space with an action of $\diff(S^1)$, the \emph{space of $k$-unlinked circles in $W$} with labels in $X$ is defined to be
        \[\confks(W;X):=\Emb^{\mathrm{unl}}(kS^1,W)\times X^k/\diff(kS^1)\]
    where $\Emb^{\mathrm{unl}}$ denotes the space of unlinked embeddings.
\end{definition}

Note that if $W$ is a simply connected manifold of dimension $d\geq 5$, all embeddings of $kS^1$ into $W$ are unlinked. 

Then a model for $B\diff^+(kS^1)$ is precisely the configuration space $\confks(\mm{R}^\infty;\{\pm 1\})$ of $k$ circles in $\mm{R}^\infty$ with labels in $\{\pm 1\}$, and the evaluation map $E_{kS^1}$ simply takes an oriented decorated submanifold $S$ in $\mm{R}^\infty$, to the configurations given by the marked $k$ oriented circles. 

\begin{corollary}\label{cor:decouping-circles-in-surfaces-conf}
    Let $S_{g,b}$ be the oriented surface of genus $g$ and $b\geq 1$ boundary components. Then for all $3i\leq 2g$
    \[H_i(B\diff_{kS^1}(S_{g,b}))\cong H_i(B\diff(S_{g,b})\times \confks(\mm{R}^\infty;\{\pm 1\})).\]
\end{corollary}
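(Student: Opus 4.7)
The plan is to deduce this corollary directly from Corollary \ref{cor:decouping-circles-in-surfaces} by exhibiting a weak homotopy equivalence
\[\confks(\mm{R}^\infty;\{\pm 1\})\simeq B(\Sigma_k\wr\SO(2)).\]
Once this is established, the claim follows by applying $H_i(B\diff(S_{g,b})\times -)$ to both sides and chaining with the isomorphism provided by Corollary \ref{cor:decouping-circles-in-surfaces}.

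To construct this equivalence I would first note that every embedding of $kS^1$ into $\mm{R}^\infty$ is unlinked, so
\[\confks(\mm{R}^\infty;\{\pm 1\})=\Emb(kS^1,\mm{R}^\infty)\times\{\pm 1\}^k\big/\diff(kS^1).\]
A standard push-to-infinity argument shows $\Emb(kS^1,\mm{R}^\infty)$ is weakly contractible, and since $\diff(kS^1)$ acts freely on it, this space is a model for $E\diff(kS^1)$; consequently the quotient above is the Borel construction $\{\pm 1\}^k\parallelsum \diff(kS^1)$. I would then identify the $\diff(kS^1)$-action on the labels: writing $\diff(kS^1)=\diff(S^1)^k\rtimes\Sigma_k$, an element $(\phi_1,\dots,\phi_k;\sigma)$ permutes coordinates via $\sigma$ and multiplies each entry by the orientation sign $\sgn(\phi_i)\in\{\pm 1\}$. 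The stabiliser of $(+1,\dots,+1)$ is precisely $\diff^+(S^1)^k\rtimes\Sigma_k=\diff^+(kS^1)$, and the action is transitive, giving $\{\pm 1\}^k\cong\diff(kS^1)/\diff^+(kS^1)$ as $\diff(kS^1)$-sets. Combining these,
\[\confks(\mm{R}^\infty;\{\pm 1\})\simeq\Emb(kS^1,\mm{R}^\infty)/\diff^+(kS^1)\simeq B\diff^+(kS^1)\simeq B(\Sigma_k\wr\SO(2)),\]
where the last equivalence uses $\diff^+(S^1)\simeq\SO(2)$.

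I do not anticipate a substantial obstacle: the argument is essentially an unpacking of the geometric models described just before the statement. The only detail that requires care is verifying that the $\diff(kS^1)$-action on labels is indeed the orientation-twisted one used above, so that the geometric evaluation map on decorated submanifolds in $\mm{R}^\infty$ matches the algebraic evaluation map underlying Corollary~\ref{cor:decouping-circles-in-surfaces}; this matches the identification $\Map_{\GL_d}(\Fr(TW|_{kS^1}),\Theta^{or})\simeq\{\pm 1\}^k$ that was exploited in that proof.
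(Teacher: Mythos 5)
Your proposal is correct and follows the same route as the paper: identify $\confks(\mm{R}^\infty;\{\pm1\})$ as a model for $B\diff^+(kS^1)\simeq B(\Sigma_k\wr\SO(2))$ via the weak contractibility of $\Emb(kS^1,\mm{R}^\infty)$ together with the orbit--stabiliser cancellation $\{\pm1\}^k\cong\diff(kS^1)/\diff^+(kS^1)$, then substitute into Corollary~\ref{cor:decouping-circles-in-surfaces}. The paper treats this identification as immediate from the geometric discussion preceding the corollary; your write-up simply spells out the same reasoning.
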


Analogous results for other tangential structures can be obtained by the same arguments. We now look at how the result above generalises for higher dimensions. From now on, we let $L(-)\coloneqq \Map(S^1,-)$ denote the free loop space.

\begin{corollary}\label{thm:decoupling-circles}
    Let $W$ be a compact simply-connected manifold of dimension $2n\geq6$, $\rho_W$ an $n$-connected $\Theta$-structure on $W$, and denote by $g$ the stable genus of $W$. Then for all $i\leq\hdrange$, the decoupling map $D_{kS^1}$ induces an isomorphism 
    \[H_i(\mmm{M}^\Theta_{kS^1}(W,\rho_W))\cong H_i\left(\mmm{M}^\Theta(W,\rho_W)\times \circlepathcomponent\right)\]
    where $(-)_0$ is the path component of the image of $\rho_W$, $L_\diamond(-)$ is equal to the subspace $L_{null}(-)$ of nullhomotopic loops if $W$ is spinnable, and is equal to $L(-)$ otherwise.
\end{corollary}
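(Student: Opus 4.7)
The plan is to deduce the corollary by applying Corollary~\ref{cor:decoupling-sub-high-dim} with $L=f(kS^1)\subset W$ the image of an unlinked embedding and $G=\ima e_{kS^1}$. Since $\dim L = 1 \leq n-1$, the dimension assumption of that corollary is satisfied, giving, for all $i\leq\hdrange$, the isomorphism
\[
H_i(\mmm{M}^\Theta_{kS^1}(W,\rho_W)) \;\cong\; H_i\bigl(\mmm{M}^\Theta(W,\rho_W)\times (\Map_{\GL_d}(\Fr(TW|_L),\Theta)\parallelsum G)_0\bigr).
\]
The remaining task is to identify the Borel construction on the right with the labelled configuration space $\confks(\mm{R}^\infty;(L(\Theta)\parallelsum L_\diamond(\GL_{d-1}))_0)$, which I would carry out in three steps.

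\textbf{Trivialising $TW|_{kS^1}$.} Since $W$ is simply-connected (hence orientable) and $kS^1$ is one-dimensional, $TW|_{kS^1}$ is a trivialisable rank-$d$ bundle. A choice of trivialisation identifies $\Fr(TW|_{kS^1}) \cong kS^1\times\GL_d$ as $\GL_d$-bundles, so $\Map_{\GL_d}(\Fr(TW|_{kS^1}),\Theta) \cong \Map(kS^1,\Theta) = L(\Theta)^k$.

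\textbf{Identifying $G$ and rewriting the Borel construction.} Lemma~\ref{lemma:isos of the normal bundle} gives a homotopy equivalence $\iso(TW|_{kS^1},TkS^1)\simeq\iso(\nu_{kS^1})$, and \eqref{eq:map-eks1} identifies the latter with $L(\GL_{d-1})^k\rtimes\diff(kS^1)$. Combined with Lemma~\ref{surjectivity}, which computes $\ima\overline{e_{kS^1}}$, this yields
\[
G\;\simeq\; L_\diamond(\GL_{d-1})^k\rtimes\diff(kS^1).
\]
Since the $L(\GL_{d-1})^k$-factor of $G$ acts on $L(\Theta)^k$ by pointwise precomposition loop-by-loop (via the inclusion $\GL_{d-1}\hookrightarrow\GL_d$ coming from the trivialisation above) and $\diff(kS^1)$ acts by reparametrisation and permutation of the $k$ circles, an iterated Borel construction for the semi-direct product gives
\[
L(\Theta)^k\parallelsum G \;\simeq\; \bigl(L(\Theta)\parallelsum L_\diamond(\GL_{d-1})\bigr)^k\parallelsum\diff(kS^1).
\]
Finally, every link in $\mm{R}^\infty$ is trivial, so $\Emb^{\mathrm{unl}}(kS^1,\mm{R}^\infty)=\Emb(kS^1,\mm{R}^\infty)$ is weakly contractible and carries a free $\diff(kS^1)$-action, hence is a model for $E\diff(kS^1)$. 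Therefore $X^k\parallelsum\diff(kS^1)\simeq\confks(\mm{R}^\infty;X)$ for any $\diff(S^1)$-space $X$, and restricting both sides to the path component determined by $\rho_W$ yields the desired identification.

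\textbf{Main obstacle.} I expect the hardest part to be the final bookkeeping: verifying that the path component $(-)_0$ produced by Corollary~\ref{cor:decoupling-sub-high-dim} corresponds precisely to the ``all $k$ labels lie in the chosen component of $L(\Theta)\parallelsum L_\diamond(\GL_{d-1})$'' component on the configuration-space side, and checking that the $\diff(kS^1)$-action used to form $\confks(\mm{R}^\infty;-)$ coincides with the semi-direct factor of $G$ (reparametrisation and permutation of circles) rather than a twisted action arising from the chosen trivialisation of $TW|_{kS^1}$.
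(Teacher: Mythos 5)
Your proposal matches the paper's proof essentially step for step: apply Corollary~\ref{cor:decoupling-sub-high-dim} with $L=kS^1$, trivialise $TW|_{kS^1}$ to rewrite the mapping space as $L(\Theta)^k$, use Lemma~\ref{lemma:isos of the normal bundle}, \eqref{eq:map-eks1}, and Lemma~\ref{surjectivity} to identify $G$ (together with the equivalence of smooth with continuous loops), rearrange the iterated Borel construction, and then realise it as $\confks(\mm{R}^\infty;-)$ via the model $\Emb(kS^1,\mm{R}^\infty)\simeq E\diff(kS^1)$. The path-component bookkeeping you flag as a potential obstacle is indeed left as implicit in the paper, so your proof is complete at the same level of detail.
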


\begin{proof}
     The result follows from applying Corollary \ref{cor:decoupling-sub-high-dim}, taking $G=\ima e_{kS^1}\simeq \ima \overline{e_{kS^1}}$, which was identified in Lemma \ref{surjectivity}. Moreover, since $TkS^1$ is orientable, it is a trivial bundle and therefore the space $\Map_{\GL_d}(\Fr(TkS^1),\Theta)$ is equivalent to the space of continuous maps $kS^1\to \Theta$, which is precisely $L(\Theta)^k$.
     
     Then, by Corollary \ref{cor:decoupling-sub-high-dim}, for all $i\leq\hdrange$, the decoupling map $D_{kS^1}$ induces an isomorphism
        \[H_i(\mmm{M}^\Theta_{kS^1}(W,\rho_W))\cong H_i\left(\mmm{M}^\Theta(W,\rho_W)\times (L(\Theta)^k\parallelsum C^\infty_\diamond(S^1,\GL_{d-1})^k\rtimes \diff(kS^1))_0\right)\]
     Moreover, the space $(L(\Theta)^k\parallelsum C^\infty_\diamond(S^1,\GL_{d-1})^k\rtimes \diff(kS^1))_0$ is homotopy equivalent to
        \begin{equation}\label{eq:base space for circles}
            (L(\Theta)\parallelsum C^\infty_\diamond(S^1,\GL_{d-1}))^k\parallelsum \diff(kS^1))_0
        \end{equation}
    Taking $\Emb(kS^1,\mm{R}^\infty)$ as the model for $E\diff(kS^1)$, we get a model for the space in \ref{eq:base space for circles}, which is precisely the configuration space of $k$ circles in $\mm{R}^\infty$ with labels in $(L(\Theta)\parallelsum C^\infty_\diamond(S^1,\GL_{d-1}))_0$, as required. Since the space of smooth loops is homotopy equivalent to the free loop space, the result follows.
\end{proof} 
\section{Decoupling for general tangential structures in higher dimensions}\label{section:general-tangential-structures}

\noindent In this section we show how Corollaries \ref{higherdimensions} and \ref{cor:decoupling-sub-high-dim} can be generalised for other tangential structures, based on the techniques used by Galatius and Randal-Williams in \cite[Section 9]{MR3665002}. Recall that the decoupling theorems (\ref{decoupling} and \ref{decouplingL}) relied on the hypothesis that the map 
$$\mmm{M}^\Theta(W_N,\rho_{W_N}) \rightarrow \mmm{M}^\Theta(W,\rho_{W}) $$
induces a homology isomorphism in a range. In even dimensions at least $6$, this assumption was shown to hold in several cases in \cite[Corollary 1.7]{MR3718454} as recalled in \ref{thm:hom-st-high-dim}, but only when the $\Theta$-structure $\rho_W:\Fr(TW)\rightarrow \Theta$ is $n$-connected (ie. the induced map $\pi_i(\Fr(TW))\rightarrow \pi_i(\Theta)$ is an isomorphism for $i<n$ and an epimorphism for $i=n$). In \cite[Section 9]{MR3718454} Galatius and Randal-Williams provide a generalisation of the result to general tangential structures. In this section we introduce the tools used to construct this generalisation and show how they also provide an extension of the decoupling result in higher dimensions for general tangential structures.

One could hope that for any manifold $W$ and any $\Theta$-structure $\rho_W$, the decoupling map would still induce a homology isomorphism, but this is not the case, as it is shown by the following example.

\begin{example}
    Consider the manifold $W_g=\#_gS^n\times S^n$ with one embedded disc as a decoration. Let $W_{g,1}=\#_g(S^n\times S^n)\setminus \int(D^{2n})$ and recall there is an isomorphism $$\diff^+(W_{g,1})\xrightarrow{\cong}\diff^+_1(W_g)$$
    given by extending the diffeomorphism of $W_{g,1}$ by the identity on the marked disc (see Lemma \ref{fibrationongroups}).
    
    Therefore, the decorated moduli space $\mmm{M}^{or}_1(W_g,\rho_W)\simeq B\diff^+_1(W_g)$ is weakly equivalent to $\mmm{M}^{or}(W_{g,1},\rho_W)\simeq B\diff^+(W_{g,1})$. In this case, the decoupling map 
    \begin{equation}\label{eq:not-an-iso}
        \begin{tikzcd}[row sep=small]
        \mmm{M}^{or}_1(W_g,\rho_{W_{g}})\ar[r, "D"]\ar[draw=none]{d}[sloped,auto=false]{\simeq} 
         & \mmm{M}^{or}(W_g,\rho_{W_{g}})\times \Theta^{or}_0 \ar[draw=none]{d}[sloped,auto=false]{\simeq}\\
        \mmm{M}^{or}(W_{g,1},\rho_{W_{g,1}}) & \mmm{M}^{or}(W_g,\rho_{W_{g}})
        \end{tikzcd}
    \end{equation}
    does not induce a homology isomorphism on integral coefficients in a stable range as was shown in \cite[Sections 5.1 and 5.2]{galatius2018moduli}. This implies that the decoupling as stated in Corollary \ref{higherdimensions} is not true for general tangential structures.
\end{example}

Let $W$ be a $2n$-dimensional manifold, $2n\geq 6$, with possibly non-empty boundary, and $\lambda_W$ a $\Lambda$-structure on $W$. If the map $\lambda_W:\Fr(TW)\to \Lambda$ is not $n$-connected we will use an ``intermediate'' tangential structure $\Theta$ which is better behaved. Precisely, let the following be the Moore-Postnikov $n$-stage of $\lambda_W$,
\begin{equation}\label{eq:Moore Postnikov n-stage}
    \begin{tikzcd}
     & \Theta \arrow[rd, "u"] &         \\
    \Fr(TW) \arrow[rr, "\lambda_W"] \arrow[ru, "\rho_W"] &   & \Lambda.
\end{tikzcd}
\end{equation}
This means that $\Theta$ is a $\GL_d$-space, $u$ is an $n$-co-connected (ie. the induced map $\pi_i(\Fr(TW))\rightarrow \pi_i(\Theta)$ is an isomorphism for $i>n$ and a monomorphism for $i=n$) equivariant fibration and $\rho_W$ an $n$-connected equivariant cofibration. Such a factorization always exist and it is unique up to homotopy equivalence.

Denote by $\rho_\partial$ and $\lambda_\partial$ the restriction of $\rho_W$ and $\lambda_W$ respectively to $\Fr(TW)|_{\partial W}$.  Any $\Theta$-structure on $W$ induces a $\Lambda$-structure by postcomposition with $u$, giving us a map
$$ \Bun^\Theta(W,\rho_W)\rightarrow \Bun^{\Lambda}(W,\lambda_W). $$

\begin{lemma}[\cite{MR3665002}, Lemma 9.4]
    If $W$ is a manifold equipped with a $\Lambda$-structure $\lambda_W$ and $\Fr(TW)\xrightarrow{\rho_W} \Theta \xrightarrow{u} \Lambda$ is a Moore-Postnikov $n$-stage of $\lambda_W$, then the stable genus $\overline{g}(W,\rho_W)$ is equal to $\overline{g}(W,\lambda_W)$.
\end{lemma}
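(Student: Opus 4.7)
The plan is to show that the two stable genera are equal by establishing that the set of tuples of disjoint ``admissible'' embeddings of $(S^n\times S^n)\setminus\{*\}$ into a manifold is unchanged when we pass from $\lambda_W$ to its $n$-stage $\rho_W$. Since stable genus is defined by attaching $k$ copies of $(S^n\times S^n)\setminus \text{int}(D^{2n})$ and subtracting $k$, it suffices to prove the stronger pointwise equality $g(V,\rho_V) = g(V,\lambda_V)$ whenever $\Fr(TV)\xrightarrow{\rho_V} \Theta \xrightarrow{u} \Lambda$ is a Moore--Postnikov $n$-factorization; then specialise to $V = W \# W_{k,1}$ and note that the relevant factorization for $\lambda_W^{(k)}$ is obtained from the one for $\lambda_W$ by extending over the $W_{k,1}$-piece by any admissible structure.

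For the inequality $g(V,\rho_V)\leq g(V,\lambda_V)$, suppose $j:\coprod_g (S^n\times S^n)\setminus\{*\}\hookrightarrow V$ is a disjoint collection of embeddings with $j^*\rho_V$ admissible. By definition, admissibility is preserved by postcomposition with the equivariant map $u$ (this is either part of the definition of admissibility in \cite{galatius2018moduli} or, alternatively, follows because admissibility is a condition on the homotopy class of the structure relative to its value over a standard disc, and $u$ is defined over the identity on $\Fr(TV)$). Hence $j^*\lambda_V = u\circ j^*\rho_V$ is admissible, witnessing that $g \leq g(V,\lambda_V)$.

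For the reverse inequality, let $j$ be a family of $g$ disjoint embeddings with $j^*\lambda_V$ admissible. The plan is to show that $j^*\rho_V$ is then automatically admissible as well. Each embedded piece $(S^n\times S^n)\setminus \{*\}$ is homotopy equivalent to $S^n\vee S^n$, so its frame bundle has the equivariant homotopy type of a $\GL_d$-space built from cells of dimension at most $n$. The map $u$ is $n$-co-connected, so its homotopy fibre $F$ satisfies $\pi_i(F)=0$ for $i\geq n$. Thus the obstructions to lifting $j^*\lambda_V$ to a $\Theta$-structure (equivariantly, over $\Fr(T(S^n\times S^n)\setminus\{*\})$) all vanish, and any two such lifts are equivariantly homotopic. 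Since we already have the lift $j^*\rho_V$, and since admissibility is defined in terms of the (equivariant) homotopy class of the structure, the admissibility of $j^*\lambda_V$ is equivalent to admissibility of $j^*\rho_V$. This gives $g(V,\lambda_V)\leq g(V,\rho_V)$.

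The main obstacle is making the last argument rigorous: one needs the precise definition of admissibility from \cite{galatius2018moduli} to confirm that ``admissible'' is a purely homotopical condition on the equivariant map $\Fr(T(S^n\times S^n)\setminus\{*\})\to \Theta$ (respectively $\Lambda$), and that admissibility corresponds under $u_*$. Granting this, the obstruction-theoretic argument above — which relies crucially on $(S^n\times S^n)\setminus\{*\}\simeq S^n\vee S^n$ being $n$-dimensional and on $u$ being $n$-co-connected — completes the proof. Combining with the argument of the first paragraph, one obtains $g(W\# W_{k,1},\rho_W^{(k)}) = g(W\# W_{k,1},\lambda_W^{(k)})$ for every $k$, and taking the supremum yields the equality of stable genera.
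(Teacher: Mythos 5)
This lemma is cited from \cite{MR3665002} and the paper gives no proof of its own, so the comparison is against the source.

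Your overall strategy is sound: reduce to the pointwise statement $g(V,\rho_V)=g(V,\lambda_V)$ for a Moore--Postnikov $n$-stage, note the easy inequality from postcomposition with $u$, and use the $n$-co-connectivity of $u$ for the reverse. But there is a genuine gap in the reverse inequality, and it sits exactly where you flag it. Admissibility of a structure on $(S^n\times S^n)\setminus\{*\}$ is \emph{not} a condition on the homotopy class of the structure over $(S^n\times S^n)\setminus\{*\}$ alone; it asks for a pair of embeddings $e,f\colon S^n\times\mathbb{R}^n\hookrightarrow (S^n\times S^n)\setminus\{*\}$ whose cores meet transversally once and such that the \emph{restricted} structures $\ell\circ De$, $\ell\circ Df$ on $S^n\times\mathbb{R}^n$ each extend to a structure on $D^{n+1}\times\mathbb{R}^n$. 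So the transfer ``$\lambda$-admissible $\Rightarrow$ $\Theta$-admissible'' is not a consequence of uniqueness of $\Theta$-lifts over $(S^n\times S^n)\setminus\{*\}\simeq S^n\vee S^n$; one has to show that if $u\circ(\rho_V\circ De)$ extends over $D^{n+1}\times\mathbb{R}^n$ then so does $\rho_V\circ De$ itself. This is a \emph{relative} lifting problem over the pair $(D^{n+1}\times\mathbb{R}^n,\,S^n\times\mathbb{R}^n)\simeq(D^{n+1},S^n)$, and the relevant obstruction groups $H^{i+1}(D^{n+1},S^n;\pi_i(\mathrm{fib}\, u))$ vanish because the pair is $n$-connected and $\pi_i(\mathrm{fib}\,u)=0$ for $i\geq n$. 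Equivalently, the point is simply that $u_*\colon\pi_n\Theta\to\pi_n\Lambda$ is injective, so a $\Theta$-structure on $S^n\times\mathbb{R}^n\simeq S^n$ bounds as soon as its image in $\Lambda$ does. Your obstruction count is applied to the wrong lifting problem; the dimension that matters is that of $S^n$ (and the $n$-connectivity of $(D^{n+1},S^n)$), not that of $S^n\vee S^n$. The hand-wave ``admissibility is defined in terms of the homotopy class, so it corresponds under $u_*$'' is precisely the statement requiring this relative-lifting argument, and as written it does not follow from uniqueness of lifts. Once this is fixed, your reduction from $\overline{g}$ to the pointwise $g$ (applied to $W\# W_{k,1}$, with the $\Theta$-structure $\rho_W^{(k)}$ a lift of $\lambda_W^{(k)}$ through the same $u$) goes through.
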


We now define a topological monoid that is crucial to the comparison between the moduli spaces $\mmm{M}^\Lambda(W,\lambda_W)$ and $\mmm{M}^\Theta(W,\rho_W)$.

\begin{definition}
    If $W$ is a closed manifold, denote by $\hAut(u)$ the group-like topological monoid consisting of equivariant weak equivalences $\Theta\to \Theta$ over $u$, ie. $\GL_d$-equivariant maps $\Theta\to \Theta$ fitting into the following commutative diagram
    \[\begin{tikzcd}
        \Theta \arrow[rr, "\simeq"] \arrow[dr, "u"'] && \Theta. \arrow[dl, "u"]\\
        & \Lambda &
    \end{tikzcd}\]
    If $W$ has non-empty boundary, let $\rho_\partial$ be the restriction of $\rho_W$ to $\partial W$. Denote by $\hAut(u,\rho_\partial)$ the group-like topological monoid consisting of equivariant weak equivalences $\Theta\to \Theta$ over $u$ and under $\rho_\partial$
    \[\begin{tikzcd}[column sep=tiny]
        & \Fr(TW)|_{\partial W} \arrow[dl, "\rho_\partial"'] \arrow[dr, "\rho_\partial"] &\\
        \Theta \arrow[rr, "\simeq"] \arrow[dr, "u"'] && \Theta. \arrow[dl, "u"]\\
        & \Lambda &
    \end{tikzcd}\]
\end{definition}

The monoid $\hAut(u,\rho_\partial)$ acts on the space of $\Theta$-structures on $W$ by post-composition, and the following result shows that this action encodes precisely the relation between $\Theta$ and $\Lambda$-structures on $W$.

\begin{lemma}[\cite{MR3665002}, Lemma 9.2]
    In the context defined above, the map induced by postcomposition with $u$
    \[\Bun^\Theta_{\rho_\partial}(W)\parallelsum \hAut(u,\rho_\partial)\to \Bun^{\Lambda}_{\lambda_\partial}(W)\]
    is a homotopy equivalence onto the path components which it hits.
\end{lemma}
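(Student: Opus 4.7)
The plan is to realise the map as a fibration factoring through a Borel construction and then analyse its homotopy fibres using the Moore–Postnikov hypothesis.

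First I would observe that, since $u\colon\Theta\to\Lambda$ is a $\GL_d$-equivariant Serre fibration, postcomposition with $u$ induces a Serre fibration
\[p\colon \Bun^\Theta_{\rho_\partial}(W)\longrightarrow \Bun^{\Lambda}_{\lambda_\partial}(W),\]
by an argument parallel to Lemma~\ref{bun-fibration} (using that the $\GL_d$-action on $\Fr(TW)$ is free and the lifting problem can be solved equivariantly cell by cell on a $\GL_d$-CW model of $\Fr(TW)$ rel $\Fr(TW)|_{\partial W}$). The monoid $\hAut(u,\rho_\partial)$ acts on $\Bun^\Theta_{\rho_\partial}(W)$ by postcomposition, and $p$ is invariant under this action because each element of $\hAut(u,\rho_\partial)$ lies over $u$. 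Hence $p$ factors as
\[\Bun^\Theta_{\rho_\partial}(W)\longrightarrow \Bun^\Theta_{\rho_\partial}(W)\parallelsum\hAut(u,\rho_\partial)\xrightarrow{\;\tilde p\;}\Bun^{\Lambda}_{\lambda_\partial}(W),\]
and $\tilde p$ hits the same path components as $p$ by construction.

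Next, I would identify the homotopy fibre of $\tilde p$ over a point $\lambda$ in its image. Since $\hAut(u,\rho_\partial)$ is group-like, standard results on Borel constructions give a homotopy fibre sequence
\[p^{-1}(\lambda)\parallelsum\hAut(u,\rho_\partial)\longrightarrow \Bun^\Theta_{\rho_\partial}(W)\parallelsum\hAut(u,\rho_\partial)\xrightarrow{\;\tilde p\;}\Bun^{\Lambda}_{\lambda_\partial}(W)\]
over the image components. So the problem reduces to proving that the homotopy orbit space $p^{-1}(\lambda)\parallelsum\hAut(u,\rho_\partial)$ is weakly contractible for every $\lambda$ in the image of $p$. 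Choosing any $\rho\in p^{-1}(\lambda)$, this is equivalent to the statement that the orbit map
\[\alpha_\rho\colon \hAut(u,\rho_\partial)\longrightarrow p^{-1}(\lambda),\qquad \phi\longmapsto \phi\circ\rho,\]
is a weak equivalence.

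The core of the argument is this last claim, and it is where the Moore–Postnikov assumption enters. Since any $\rho\in \Bun^\Theta_{\rho_\partial}(W)$ is $n$-connected (equivariantly, rel the boundary), and $u$ is $n$-co-connected, equivariant obstruction theory gives two facts. (i) Given two lifts $\rho,\rho'\in p^{-1}(\lambda)$, one constructs a $\GL_d$-equivariant map $\phi\colon\Theta\to\Theta$ over $u$ and under $\rho_\partial$ with $\phi\circ\rho=\rho'$ by cellular induction on $\Theta$ relative to the image of $\rho$: the obstructions to extending across a cell vanish because the connectivity of $\rho$ and the co-connectivity of $u$ meet in complementary degrees; the resulting $\phi$ is automatically a weak equivalence over $\Lambda$ by the five lemma applied to the long exact sequences of the fibrations $u$. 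This gives surjectivity of $\pi_0(\alpha_\rho)$. (ii) The same cell-by-cell argument applied to homotopies shows that the homotopy fibre of $\alpha_\rho$ over any given $\rho'$ is weakly contractible, i.e.\ the space of self-equivalences $\psi$ with $\psi\circ\rho=\rho'$ is contractible. Together (i) and (ii) imply that $\alpha_\rho$ is a weak equivalence.

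The main obstacle is carrying out the equivariant obstruction theory of the previous paragraph cleanly: one must set up a $\GL_d$-equivariant relative CW structure on $\Theta$ over $\Lambda$, rel $\rho(\Fr(TW))\cup\rho_\partial$, and verify that at each cell the relevant obstruction group vanishes from $n$-connectedness of $\rho$ and $n$-co-connectedness of $u$. Granting this, the homotopy fibres of $\tilde p$ over its image are contractible, so $\tilde p$ is a weak equivalence onto the path components it hits, which is exactly the statement of the lemma.
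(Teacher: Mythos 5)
The paper itself does not prove this lemma — it is quoted verbatim from Galatius--Randal-Williams \cite{MR3665002} — so I can only assess your sketch on its own terms. Your overall framework is the right one: factor $p$ through the Borel construction, identify the fibre of $\tilde p$ over $\lambda$ with $p^{-1}(\lambda)\parallelsum\hAut(u,\rho_\partial)$, and reduce the statement to the assertion that the orbit map $\alpha_\rho\colon\hAut(u,\rho_\partial)\to p^{-1}(\lambda)$ is a weak equivalence. This is, as far as I know, the shape of the argument in \cite{MR3665002}. There is, however, a genuine gap in how you justify the core step.

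The assertion that \emph{any} $\rho\in\Bun^\Theta_{\rho_\partial}(W)$ is $n$-connected is false. The Moore--Postnikov hypothesis only makes the fixed $\rho_W$ $n$-connected; $\Bun^\Theta_{\rho_\partial}(W)$ contains arbitrary equivariant bundle maps extending $\rho_\partial$, most of which are not $n$-connected. This is not cosmetic: your obstruction-theory step needs the relative pair $(\Theta,\rho(\Fr(TW)))$ to have cells only in dimensions $>n$, which follows from $\rho$ being an $n$-connected cofibration and fails otherwise; and your ``$\phi$ is automatically a weak equivalence'' conclusion is wrong when $\rho'$ is not $n$-connected, since for $i<n$ one has $\pi_i(\phi)=\pi_i(\rho')\circ\pi_i(\rho)^{-1}$, which is an isomorphism only if $\pi_i(\rho')$ is. The repair is exactly the caveat in the statement (``onto the path components which it hits''): one must restrict to the path component of $\Bun^\Theta_{\rho_\partial}(W)\parallelsum\hAut(u,\rho_\partial)$ containing $\rho_W$, whose fibre over $\lambda$ is not all of $p^{-1}(\lambda)\parallelsum\hAut(u,\rho_\partial)$ but only the homotopy quotient of the $\hAut$-orbit through $\rho_W$'s component of $\pi_0(p^{-1}(\lambda))$; every lift in that orbit is $n$-connected because $\hAut(u,\rho_\partial)$ consists of weak equivalences. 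With that restriction in place your steps (i) and (ii) do go through, although I would also replace the appeal to the five lemma by the direct computation of $\pi_i(\phi)$: for $i>n$ it is the identity because $\pi_i(u)$ is an isomorphism; for $i=n$ it is the identity because $\pi_n(u)$ is a monomorphism and $\pi_n(u)\circ\pi_n(\phi)=\pi_n(u)$; and for $i<n$ it equals $\pi_i(\rho')\circ\pi_i(\rho)^{-1}$, an isomorphism once both lifts are known to be $n$-connected. The five lemma applied to the fibration sequence of $u$ only closes the loop after one already knows $\phi$ induces equivalences on fibres, so invoking it without this computation begs the question.
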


Let $\hAut(u,\rho_\partial)_{[W,\rho_W]}$ denote the components of $\hAut(u,\rho_\partial)$ that map $\Bun^\Theta(W,\rho_W)$ to itself. By the orbit-stabiliser theorem
    \[\Bun^\Theta(W,\rho_W)\parallelsum \hAut(u,\rho_\partial)_{[W,\rho_W]}\to \Bun^{\Lambda}(W,\lambda_W)\]
is also a homotopy equivalence onto the path components which it hits. Taking a further Borel construction with the groups $\diff(W)$, $\diff^k_m(W)$, $\diff_L(W)$, we get that the induced maps 
    \begin{align}\label{eq:haut-equivalences}
        \mmm{M}^\Theta(W,\rho_W)\parallelsum \hAut(u,\rho_\partial)_{[W,\rho_W]}& \rightarrow \mmm{M}^{\Lambda}(W,\lambda_W)\\
        \mmm{M}^{\Theta,k}_{m}(W,\rho_W)\parallelsum \hAut(u,\rho_\partial)_{[W,\rho_W]}& \rightarrow \mmm{M}^{\Lambda,k}_m(W,\lambda_W)\\
        \mmm{M}^{\Theta}_{L}(W,\rho_W)\parallelsum \hAut(u,\rho_\partial)_{[W,\rho_W]}& \rightarrow \mmm{M}^{\Lambda}_L(W,\lambda_W)
    \end{align}
are weak homotopy equivalences. Therefore, analysing  $\hAut(u,\rho_\partial)_{[W,\rho_W]}$ and applying Corollaries \ref{higherdimensions} and \ref{cor:decoupling-sub-high-dim} we get decoupling results for general $\Lambda$.

\begin{lemma}[\cite{galatius2018moduli}]\label{lemma:hAut}
    If $(W,\partial W)$ is $c$-connected for some $c\leq n-1$, then the monoid $\hAut(u,\rho_\partial)$ is a non-empty $(n-c-2)$-type. In particular, it is contractible if $(W,\partial W)$ is $(n-1)$-connected.
\end{lemma}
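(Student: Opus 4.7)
My plan is to reinterpret $\hAut(u,\rho_\partial)$ as a space of sections of a suitably truncated fibration and then apply obstruction theory on the pair $(W,\partial W)$. Passing to Borel constructions, let $B_\Theta:=\Theta\parallelsum\GL_d$ and $B_\Lambda:=\Lambda\parallelsum\GL_d$, so that $\rho_W$ and $\rho_\partial$ descend to maps $W\to B_\Theta$ and $\partial W\to B_\Theta$ over $B_\Lambda$. A $\GL_d$-equivariant self-map of $\Theta$ over $u$ then corresponds to a section of the first projection $B_\Theta\times_{B_\Lambda}B_\Theta\to B_\Theta$, and the under-$\rho_\partial$ condition prescribes the section on $\partial W$. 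Since fibrewise equivalences form a union of path components of the section space, it suffices to bound the homotopy type of the ambient space of sections.

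The fibre of this projection is the homotopy fibre $F$ of $u$. Because $u$ is $n$-coconnected, the long exact sequence gives $\pi_i(F)=0$ for $i\geq n$, so $F$ is an $(n-1)$-type. Using that $W\to B_\Theta$ is $n$-connected (by $n$-connectedness of $\rho_W$), I would pick a relative CW model in which $B_\Theta$ is built from $W$ by attaching cells of dimension $\geq n+1$. For a fibration with $(n-1)$-truncated fibre, obstructions and indeterminacies for extending sections across such a cell of dimension $k\geq n+1$ live in $\pi_{k-1}(F)$ and $\pi_k(F)$, both zero; hence the restriction map from sections over $B_\Theta$ with the fixed boundary value on $\partial W$ to sections over $W$ with the same boundary value is a weak equivalence.

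It then remains to apply obstruction theory to $(W,\partial W)$, which is $c$-connected with a relative CW structure in dimensions $c+1\leq k\leq 2n$. A Federer-type spectral sequence computes $\pi_i$ of the section space with contributions from $H^k(W,\partial W;\pi_{k+i}F)$ (possibly twisted coefficients), non-zero only when simultaneously $k\geq c+1$ and $k+i\leq n-1$; such a $k$ exists exactly when $i\leq n-c-2$. Therefore $\pi_i=0$ for $i>n-c-2$, which establishes the $(n-c-2)$-truncation. Non-emptiness is witnessed by the identity map of $\Theta$, which trivially satisfies the equivariance, over-$u$, and under-$\rho_\partial$ conditions. In particular, when $c=n-1$ one obtains a non-empty $(-1)$-type, hence contractibility.

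The main obstacle I anticipate is making the passage from sections over $B_\Theta$ to sections over $W$ fully rigorous in the $\GL_d$-equivariant setting, in particular producing a good equivariant relative CW model and checking that fibrewise equivalences pull back correctly under the restriction map. Once this reduction is in place, the remaining obstruction-theoretic counting is routine and the stated bound follows by bookkeeping on $k$ and $i$.
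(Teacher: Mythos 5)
The paper cites this lemma from Galatius--Randal-Williams and does not reproduce a proof. Your argument is correct and is the same obstruction-theoretic argument they use: reinterpret $\hAut(u,\rho_\partial)$ as a union of path components of a section space whose fibre is the homotopy fibre $F$ of $u$ (an $(n-1)$-type by $n$-coconnectedness); reduce from the base $B_\Theta$ to the base $W$ using that $\rho_W$ is by construction an $n$-connected equivariant \emph{cofibration}, so that $B_\Theta$ is obtained from $W$ by attaching cells of dimension $\geq n+1$ and the restriction of sections is a weak equivalence; then the Federer-type obstruction groups $H^p(W,\partial W;\pi_{p+i}F)$ can only be nonzero when $p\geq c+1$ and $p+i\leq n-1$ and (implicitly) $p\leq 2n$, which is impossible once $i>n-c-2$, while the identity map of $\Theta$ supplies a basepoint. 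Your index bookkeeping is right, as is the remark that being a weak equivalence is an open-and-closed condition on the section space so the truncation transfers to $\hAut(u,\rho_\partial)$. The two caveats you raise yourself --- the comparison between genuine $\GL_d$-equivariant mapping spaces and section spaces of Borel constructions, and the existence of a suitable equivariant relative CW model --- are precisely the technical points one must verify; they are handled in the source using that $\Fr(TW)$ is a free $\GL_d$-space and the cofibration condition built into the Moore--Postnikov factorisation, so your outline is sound once those details are supplied.
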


We now focus on applying these techniques to the manifold $W_{g,1}=\#_g S^n\times S^n\setminus D^{2n}$, for $n\geq 3$. 

\begin{proposition}\label{prop:decoupling Wg1}
    Let $W_{g,1}=\#_g S^n\times S^n\setminus D^{2n}$, for $n\geq 3$, and $\lambda_{W}$ a $\Lambda$-structure on $W$. Let $g$ denote the stable genus $\overline{g}(W,\lambda_W)$. For all $i\leq \frac{g-4}{3}$, the group $H_i(\mmm{M}^{\Lambda,k}_m(W_{g,1},\lambda_{W}))$ is isomorphic to
    \[ H_i(\mmm{M}^{\Lambda}(W_{g,1}, \lambda_{W})\times \Theta^m_0\parallelsum\Sigma_m\times (\Theta\parallelsum \GL_{2n}^+)^k_0\parallelsum\Sigma_k)\]
    where $(-)_0$ denotes the path-component of $E(\rho_W)$, for $\rho_W$ as in \eqref{eq:Moore Postnikov n-stage}.
\end{proposition}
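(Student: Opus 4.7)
The plan is to apply the Moore--Postnikov technique of this section to reduce the problem to a situation where Corollary~\ref{higherdimensions} applies. I would begin by forming the Moore--Postnikov $n$-stage $\Fr(TW_{g,1}) \xrightarrow{\rho_W} \Theta \xrightarrow{u} \Lambda$ of $\lambda_W$ as in \eqref{eq:Moore Postnikov n-stage}, so that by construction $\rho_W$ is an $n$-connected $\Theta$-structure on $W_{g,1}$.

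The first step is to show that $\hAut(u,\rho_\partial)_{[W,\rho_W]}$ is contractible. Since $W_{g,1}$ is $(n-1)$-connected and $\partial W_{g,1} = S^{2n-1}$ is $(2n-2)$-connected, the long exact sequence of the pair yields that $(W_{g,1},\partial W_{g,1})$ is $(n-1)$-connected, so Lemma~\ref{lemma:hAut} provides the desired contractibility. The homotopy equivalences~\eqref{eq:haut-equivalences} then give weak equivalences
\[
\mmm{M}^\Theta(W_{g,1},\rho_W) \simeq \mmm{M}^\Lambda(W_{g,1},\lambda_W) \quad \text{and} \quad \mmm{M}^{\Theta,k}_m(W_{g,1},\rho_W) \simeq \mmm{M}^{\Lambda,k}_m(W_{g,1},\lambda_W),
\]
reducing the task to proving the analogous statement for the $\Theta$-moduli spaces.

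Next I would verify the hypotheses of Corollary~\ref{higherdimensions} for $(W_{g,1},\rho_W)$: the $n$-connectedness of $\rho_W$ is automatic, the stable genera agree by the Moore--Postnikov genus lemma recalled above, and the remaining condition that $\Theta \parallelsum \GL_{2n}$ be simply-connected follows from the parallelizability of $W_{g,1}$ noted in Example~\ref{ex:Wg1-framed}. Indeed, this gives $\Fr(TW_{g,1}) \simeq W_{g,1} \times \GL_{2n}$, whence $\pi_1(\Fr(TW_{g,1})) = \pi_1(\GL_{2n}) = \mm{Z}/2$ and $\pi_0(\Fr(TW_{g,1})) = \pi_0(\GL_{2n}) = \mm{Z}/2$; the $n$-connectedness of $\rho_W$ (using $n \geq 3$) transports these identifications to $\Theta$. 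In the long exact sequence of the Borel fibration $\Theta \to \Theta\parallelsum \GL_{2n} \to B\GL_{2n}$, the connecting maps $\pi_2(B\GL_{2n}) \to \pi_1(\Theta)$ and $\pi_1(B\GL_{2n}) \to \pi_0(\Theta)$ can both be identified with isomorphisms via the $\GL_{2n}$-action on $\rho_W(\phi)$ for a chosen frame $\phi$, forcing $\pi_1(\Theta \parallelsum \GL_{2n}) = 0$. Since $W_{g,1}$ has non-empty boundary it is automatically \wellbehaved, so $\GL_d^\dagger = \GL_{2n}^+$, and Corollary~\ref{higherdimensions} applied to $(W_{g,1},\rho_W)$ produces the desired isomorphism. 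Substituting the equivalences from the previous step then finishes the proof.

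The principal subtlety lies in the simple-connectedness verification in the preceding paragraph: despite $\GL_{2n}$ itself having two components and non-trivial fundamental group, it is precisely the triviality of $TW_{g,1}$ together with the $n$-connectedness of $\rho_W$ that makes $\Theta$ good enough to satisfy the hypotheses of the homology stability theorem underlying Corollary~\ref{higherdimensions}.
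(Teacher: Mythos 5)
Your proof is correct and follows the same route as the paper: contractibility of $\hAut(u,\rho_\partial)$ via Lemma~\ref{lemma:hAut} to transport the problem from $\Lambda$ to the Moore--Postnikov $\Theta$, followed by an application of Corollary~\ref{higherdimensions}. You additionally verify the simple-connectedness of $\Theta\parallelsum\GL_{2n}$, which the paper leaves implicit; a slightly more direct way to see this is to note that $\rho_W$ induces an $n$-connected map $W_{g,1}\simeq\Fr(TW_{g,1})\parallelsum\GL_{2n}\to\Theta\parallelsum\GL_{2n}$, and $W_{g,1}$ is $(n-1)$-connected with $n\geq 3$, so $\Theta\parallelsum\GL_{2n}$ is $(n-1)$-connected.
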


Note that in the above proposition, the decorations on $\mmm{M}^{\Lambda,k}_m(W)$ get decoupled into components depending on $\Theta$, the tangential structure that appeared in the Moore-Postnikov $n$-stage factorisation of $\lambda_W$. This is quite different than what was obtained in Corollary~\ref{higherdimensions} as well as in the other decoupling theorems of sections \ref{section:proof-of-decoupling} and \ref{section:corollaries}, where the decoupled components corresponding to the marked points and discs depended on the original chosen tangential structure $\Lambda$.

\begin{proof}
    By Lemma~\ref{lemma:hAut}, we know that $\hAut(u,\rho_\partial)$ is contractible, and therefore 
        \begin{align*}
            \mmm{M}^{\Theta}(W_{g,1},\rho_W) &\simeq \mmm{M}^{\Lambda}(W_{g,1},\lambda_W)\\
            \mmm{M}^{\Theta,k}_m(W_{g,1},\rho_W) &\simeq \mmm{M}^{\Lambda,k}_m(W_{g,1},\lambda_W)
        \end{align*}
    are weak homotopy equivalences.
    Since $\rho_W$ is $n$-connected, we can apply Corollary~\ref{higherdimensions} to understand the homology of $\mmm{M}^{\Theta,k}_m(W_{g,1},\rho_W)$. Putting this together with the above identifications we get that the group $H_i(\mmm{M}^{\Lambda,k}_m(W_{g,1},\lambda_W))$ is isomorphic to $i$th homology group of
    \begin{equation*}\label{eq:borel construction for general tg structures}
        \mmm{M}^{\Lambda}(W_{g,1}, \lambda_{W})\times \Theta^m_0\parallelsum\Sigma_m\times (\Theta\parallelsum \GL_{2n}^+)^k_0\parallelsum\Sigma_k.
    \end{equation*}
\end{proof}

We now look at the case where $\Lambda$ is the tangential structure for orientations: a $\GL_d$-equivariant map $\lambda_W:\Fr(TW_{g,1})\to \{\pm 1\}$ determines, up to a contractible choice, a map $\ell'_W:W_{g,1}\to B\SO(2n)$ fitting into the following homotopy pullback square
    \[\begin{tikzcd}
    \Fr(TW_{g,1}) \ar[r, "\lambda_W"] \ar[d]  \arrow[dr, phantom, "\lrcorner", very near start]& \{\pm 1\} \ar[d]\\
    W_{g,1} \ar[r, "\ell'_W"'] & B\SO(2n).
    \end{tikzcd}\]
Then an equivariant Moore-Postnikov factorization of $\lambda_W$ can be obtained from a Moore-Postnikov factorization of $\ell'_W$. Since $W_{g,1}$ is $(n-1)$-connected and parallelizable, we know that the $n$-stage of this factorization is given by maps
    \[\begin{tikzcd}
        W_{g,1} \ar[r, "\ell_W"] & B\O(2n)\langle n\rangle \ar[r, "u"] & B\SO(2n)
    \end{tikzcd}\]
where $B\O(2n)\langle n\rangle$ is the $n$-connected cover of $B\O(2n)$. Taking the pullback of $\{\pm 1\}\to B\SO(2n)$ along these maps, we get
    \[\begin{tikzcd}
    \Fr(TW_{g,1}) \ar[r] \ar[d]  \arrow[dr, phantom, "\lrcorner", very near start]& \O[0,n-1] \ar[r] \ar[d]  \arrow[dr, phantom, "\lrcorner", very near start]&  \{\pm 1\} \ar[d]\\
    W_{g,1} \ar[r, "\ell_W"'] & B\O(2n)\langle n\rangle \ar[r, "u"] & B\SO(2n)
    \end{tikzcd}\]
where $\O[0,n-1]$ is the $(n-1)$-truncation of $\O$. Note that a path-component of  $\O[0,n-1]$ is homotopy equivalent to $\SO[0,n-1]$, the $(n-1)$-truncation of $\SO$.

\begin{corollary}\label{cor:Wg1-orientation}
    Let $W_{g,1}=\#_g S^n\times S^n\setminus D^{2n}$, for $n\geq 3$. Then for all $i\leq \frac{g-4}{3}$, the group $H_i(B\diff^{+,k}_m(W_{g,1}))$ is isomorphic to 
    \[ H_i(B\diff^+(W_{g,1})\times \SO[0,n-1]^m\parallelsum\Sigma_m\times B\O(2n)\langle n\rangle^k\parallelsum\Sigma_k).\]
\end{corollary}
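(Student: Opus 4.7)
The plan is to apply Proposition~\ref{prop:decoupling Wg1} with the orientation tangential structure $\Lambda = \Theta^{or} = \{\pm 1\}$ and then identify the two decoupled factors $\Theta^m_0 \parallelsum \Sigma_m$ and $(\Theta \parallelsum \GL_{2n}^+)^k_0 \parallelsum \Sigma_k$ as the spaces appearing in the claim. Let $\lambda_W \colon \Fr(TW_{g,1}) \to \{\pm 1\}$ be the orientation. As in Section~\ref{sec:recall-moduli spaces}, $\mmm{M}^{\Theta^{or}}(W_{g,1}, \lambda_W) \simeq B\diff^+(W_{g,1})$ and $\mmm{M}^{\Theta^{or},k}_m(W_{g,1}, \lambda_W) \simeq B\diff^{+,k}_m(W_{g,1})$. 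Since $W_{g,1} \# W_{k,1} \cong W_{g+k,1}$ has genus $g+k$, the stable genus is $\overline{g}(W_{g,1}, \lambda_W) = g$, which gives the range $i \leq (g-4)/3$.

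The discussion preceding the corollary identifies the intermediate $\GL_{2n}$-space $\Theta$ appearing in a Moore-Postnikov $n$-stage of $\lambda_W$ as the pullback $\Theta = E\GL_{2n} \times_{B\GL_{2n}} B\O(2n)\langle n \rangle$, so that $\Theta \parallelsum \GL_{2n} \simeq B\O(2n)\langle n \rangle$. The remaining task is to compute the homotopy types of $\Theta_0$ and $(\Theta \parallelsum \GL_{2n}^+)_0$.

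For $\Theta_0$: contractibility of $E\GL_{2n}$ implies that $\Theta$ is homotopy equivalent to the homotopy fibre of $B\O(2n)\langle n \rangle \to B\O(2n)$. The long exact sequence of this fibration shows that this fibre has $\pi_i = \pi_i(\O(2n))$ for $i < n$ and vanishes for $i \geq n$, so it is the $(n-1)$-truncation $\O(2n)[0,n-1]$. For $n \geq 3$ we are within the stable range of the orthogonal groups, so $\O(2n)[0,n-1] \simeq \O[0,n-1]$. The path component containing the image of $\rho_W$ corresponds to positively-oriented frames, and is therefore $\SO[0,n-1]$.

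For $(\Theta \parallelsum \GL_{2n}^+)_0$: the projection $\Theta \parallelsum \GL_{2n}^+ \to \Theta \parallelsum \GL_{2n} \simeq B\O(2n)\langle n \rangle$ is a principal $\mm{Z}/2$-bundle, classified by the pullback of $w_1 \colon B\O(2n) \to B\mm{Z}/2$ along $u$. Because $B\O(2n)\langle n \rangle$ is simply connected, this classifying map is null-homotopic, so the double cover is trivial and each path component is $B\O(2n)\langle n \rangle$. Substituting these two identifications into Proposition~\ref{prop:decoupling Wg1} yields the statement. The proof involves no substantive obstacle; the content lies entirely in unpacking the two decoupled factors in terms of standard Postnikov data.
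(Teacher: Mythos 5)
Your proposal is correct and follows essentially the same route as the paper: apply Proposition~\ref{prop:decoupling Wg1} for $\Lambda=\Theta^{or}$, use the Moore--Postnikov $n$-stage $\Fr(TW_{g,1})\to\Theta\to\{\pm 1\}$ with $\Theta\parallelsum\GL_{2n}\simeq B\O(2n)\langle n\rangle$ (so $\Theta$ is the fibre of $B\O(2n)\langle n\rangle\to B\O(2n)$, i.e.\ $\O[0,n-1]$, whence $\Theta_0\simeq\SO[0,n-1]$), and identify $(\Theta\parallelsum\GL_{2n}^+)_0\simeq B\O(2n)\langle n\rangle$. Your covering-space argument for the last identification (the $\mm{Z}/2$-cover $\Theta\parallelsum\GL_{2n}^+\to B\O(2n)\langle n\rangle$ is trivial because the base is simply connected) is a detail the paper leaves implicit in its ``direct application,'' so it is a welcome clarification rather than a deviation. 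The one place you are slightly cavalier is the computation $\overline{g}(W_{g,1},\lambda_W)=g$: your one-line argument uses that $g(W_{g+k,1},\text{orientation})=g+k$, and while the lower bound is immediate from the $g+k$ visible embedded copies of $S^n\times S^n\setminus\{*\}$, the matching upper bound is not obvious; the paper simply cites \cite[Section 3.2]{galatius2018moduli} for this fact, so your sketch is adequate but should probably carry the same citation.
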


The proof is a direct application of Proposition \ref{prop:decoupling Wg1} using the factorization described above, and the fact that for an orientation $\rho_{W_{g,1}}:\Fr(TW_{g,1})\to\{\pm1\}$, the stable genus $\overline{g}(W_{g,1},\rho_{W_{g,1}})$ is equal to $g$ (see \cite[Section 3.2]{galatius2018moduli}).

We end by using the result above to explicitly compute the cohomology of $B\diff^{+,k}_m(W_{g,1})$ with rational coefficients, in the stable range. As an immediate consequence of Corollary~\ref{cor:Wg1-orientation} and Kunneth Theorem, the elements of $H^*(B\diff^{+,k}_m(W_{g,1});\mm{Q})$ of degree $i\leq \frac{g-4}{3}$, are given by the elements of such degrees in the tensor product of the cohomology rings of $B\diff^+(W_{g,1})$, $\SO[0,n-1]^m\parallelsum\Sigma_m$ and $B\O(2n)\langle n\rangle^k\parallelsum\Sigma_k$. 

By \cite[Corollary 1.8]{MR3718454}, in degrees $i\leq \frac{g-3}{2}$, the ring $H^*(B\diff^{+}(W_{g,1});\mm{Q})$ is isomorphic to
    \[\mm{Q}[\kappa_c|c\in\mmm{B},|c|>2n]\]
where $\mmm{B}$ denotes the set of monomials in the classes $e$, $p_{n-1}$, $p_{n-2},\dots, p_{\lceil\frac{n+1}{4} \rceil}$ of $H^*(B\SO(2n))$ and $|\kappa_c|=|c|-2n$.

By the Cartan-Leray spectral sequence, we also know that 
    \[H^*(B\O(2n)\langle n\rangle^k\parallelsum\Sigma_k;\mm{Q})\cong H^*(B\O(2n)\langle n\rangle^k;\mm{Q})^{\Sigma_k}\]
the fixed points by the action of $\Sigma_k$ which permutes the factors of $(B\O(2n)\langle n\rangle)^k$. We know that $H^*(B\O(2n)\langle n\rangle;\mm{Q})$ is simply the subalgebra of $H^*(B\O(2n);\mm{Q})=\mm{Q}[p_1,\dots,p_{n-1},e]$ with no generators of degrees $\leq n$. So 
    \[H^*(B\O(2n)\langle n\rangle;\mm{Q})\cong \mm{Q}[e, p_{\lceil\frac{n+1}{4}\rceil},\dots,p_{n-1}].\]
Then $H^*(B\O(2n)\langle n\rangle^k;\mm{Q})^{\Sigma_k}$ is isomorphic to
    \[\left(\bigotimes\limits_k\mm{Q}[e, p_{\lceil\frac{n+1}{4}\rceil},\dots,p_{n-1}]\right)^{\Sigma_k}\]
the fixed points by the action of $\Sigma_k$ which permutes the factors of the $k$-fold tensor product.

Analogously,
    \[H^*(\SO[0,n-1]^m\parallelsum\Sigma_m;\mm{Q})\cong H^*(\SO[0,n-1]^m;\mm{Q})^{\Sigma_m}.\]
Using the fibre sequence 
    \[\begin{tikzcd} \SO(2n)\langle n-1\rangle \ar[r] & \SO(2n) \ar[r] & \SO[0,n-1] \end{tikzcd}\]
then by the Leray-Hirsch Theorem, we have a $\mm{Q}$-module isomorphism
    \[H^*(\SO(2n);\mm{Q})\cong H^*(\SO[0,n-1];\mm{Q})\otimes H^*(\SO(2n)\langle n-1\rangle;\mm{Q}).\]
Together with the fact that we have a canonical ring monomorphism
    \[\begin{tikzcd} H^*(\SO(2n)\langle n-1\rangle;\mm{Q})  \ar[r] & H^*(\SO(2n);\mm{Q}) \end{tikzcd}\]
we conclude that 
    \[H^*(\SO[0,n-1];\mm{Q})\cong \bigwedge [y_1,\dots, y_{\lfloor\frac{n-1}{4} \rfloor}]\]
with $|y_i|=4i-1$.

Then $H^*(\SO(2n)[0,n-1]^m;\mm{Q})^{\Sigma_m}$ is isomorphic to
    \[\left(\bigotimes\limits_m\bigwedge [y_1,\dots, y_{\lfloor\frac{n-1}{4} \rfloor}]\right)^{\Sigma_m}\]
the fixed points by the action of $\Sigma_m$ which permutes the factors of the $m$-fold tensor product.

Therefore in degrees $i\leq \frac{g-4}{3}$, the ring $H^*(B\diff^{+,k}_m(W_{g,1}))$ is isomorphic to the graded commutative algebra
    \begin{align*}
        \mm{Q}[\kappa_c|c\in\mmm{B},|c|>2n]  \otimes\left(\bigotimes\limits_m\bigwedge [y_1,\dots, y_{\lfloor\frac{n-1}{4} \rfloor}]\right)^{\Sigma_m} \otimes \left(\bigotimes\limits_k\mm{Q}[e, p_{\lceil\frac{n+1}{4}\rceil},\dots,p_{n-1}]\right)^{\Sigma_k}.
  \end{align*}


\bibliographystyle{amsalpha}
\bibliography{bibliography}

\end{document}